\def \de {\partial}
\def \Q {\mathcal{Q}}
\def \N {\mathbb{N}}
\def \O {\Omega}
\def \phi {\varphi}
\def \RNu {\mathbb{R}^{N+1}}
\def \RN {\mathbb{R}^N}
\def \R {\mathbb{R}}
\def \l {\lambda}
\def \L {\Lambda}
\def \e {\epsilon}
\def \Gsb {\Gamma_{s,\beta}}
\def \Cmu {C_0^{-1}(t)}
\def \CmuI {C^{-1}(t)}
\def \la {\left\langle }
\def \ra {\right\rangle}
\def\elle{\mathcal{L}}
\def \trace{\mathrm{tr}}
\def \deter{\mathrm{det}}
\newtheorem{theorem}{Theorem}[section]
\newtheorem{lemma}[theorem]{Lemma}
\newtheorem{corollary}[theorem]{Corollary}
\newtheorem{remark}[theorem]{Remark}
\newtheorem{definition}[theorem]{Definition}
\numberwithin{equation}{section}
\begin{document}

\title[Harnack inequality for a class of Kolmogorov equations]{Harnack inequality for a class of Kolmogorov-Fokker-Planck equations in non-divergence form}
\author[F. Abedin and G. Tralli]{Farhan Abedin and Giulio Tralli$^*$} 
\address{Department of Mathematics, Michigan State University, East Lansing, MI 48823}
\email{abedinf1@msu.edu}
\address{Dipartimento d'Ingegneria Civile e Ambientale (DICEA), Universit\`a di Padova, Via Marzolo 9, 35131 Padova, Italy}
\email{giulio.tralli@unipd.it}
\thanks{2010 Mathematics Subject Classification: 35K70, 35R05, 35Q84, 35H10, 35B45.\\
\hphantom{11l}Key words: ultraparabolic equations, apriori estimates, growth lemma.\\
\hphantom{11l}$^*$ Corresponding author: giulio.tralli@unipd.it}

\begin{abstract}
We prove invariant Harnack inequalities for certain classes of non-divergence form equations of Kolmogorov type. The operators we consider exhibit invariance properties with respect to a homogeneous Lie group structure. The coefficient matrix is assumed either to satisfy a Cordes-Landis condition on the eigenvalues, or to admit a uniform modulus of continuity.
\end{abstract}

\maketitle

\setcounter{tocdepth}{1}
\tableofcontents

\section{Introduction}

The purpose of this article is to study regularity properties of solutions to degenerate-parabolic equations in non-divergence form, whose prototypical example is given by
\begin{equation}\label{prototype}
\mathcal{K} := \trace\left(A(v,y,t)D^2_v\right)+\la v, \nabla_y \ra - \de_t, \quad\mbox{for }(v,y,t)\in\R^d\times\R^d\times\R,
\end{equation}
where the $d\times d$ matrix $A(v,y,t)$ is uniformly positive definite. The case $A = \mathbb{I}_d$ corresponds to the well-known Kolmogorov equation \cite{Kol}, which governs the probability density of a system with $2d$ degrees of freedom. The Kolmogorov operator is one of the key examples of hypoelliptic operators studied by H\"ormander in his seminal work \cite{Ho}. Operators like \eqref{prototype} also appear naturally in mathematical finance and various other stochastic models \cite{Ba, Ch, LPP}. Most notably perhaps, the operator $\mathcal{K}$ (and its divergence form counterpart) arises in the kinetic theory of gases as the leading order term in the spatially inhomogeneous Landau equation, which can be interpreted as the limit of the Boltzmann equation when only grazing collisions are taken into account \cite{AV04, Li}. For more on recent progress in the regularity theory of kinetic equations, we refer the reader to the works \cite{CSS, GIMV, GG, IS} and references therein.

In the Landau equation, the coefficients of the principal part depend on the solution itself in a nonlocal manner. This motivates the study of regularity properties of $\mathcal{K}$ with minimal assumptions on the smoothness of $A$. For the divergence-form version of $\mathcal{K}$,
$$\text{div}_v\left(A(v,y,t)\nabla_v\right)+\la v, \nabla_y \ra - \de_t,$$
with bounded measurable coefficients $A$, a Moser-type $L^2$-to-$L^{\infty}$ iteration was obtained in \cite{PP04}, a H\"older regularity result for the solutions was shown in \cite{WZ}, and the Harnack inequality has been proved recently in \cite{GIMV}. Related regularity estimates for a more general class of divergence-form operators with rough coefficients can be found in \cite{CPP, aLP, aLPP}.
For the non-divergence form operator \eqref{prototype} with $A$ assumed to be merely bounded and measurable, the analogue of the Krylov-Safonov Harnack inequality \cite{KS80} is still unknown. This is primarily due to the lack of a suitable version of the Aleksandrov-Bakelman-Pucci maximum principle; we refer to \cite[Chapter VII]{Lieberman} for the uniformly parabolic case. On the other hand, with $A$ assumed to be H\"older continuous, the regularity theory is well-settled and several results have been obtained, even for operators with more general drift terms: we mention, among others, the results concerning the existence of the fundamental solution via Levi-parametrix methods, two-sided Gaussian-type bounds, and also Harnack inequalities \cite{DM, DiP, Poli, PoliA}.

In this work, we prove Harnack's inequality for non-negative solutions to $\mathcal{K}u = 0$ under either a Cordes-Landis condition or a continuity assumption on the coefficient matrix $A$ (see subsection \ref{results}, hypotheses \hyperref[H1]{H1} and \hyperref[H2]{H2}). Similar results have been obtained for other H\"ormander type operators, namely for non-divergence form operators structured on Heisenberg vector fields \cite{AGT, Gutierrez-Tournier-Harnack, Tralli-Critical-Density}.
The techniques we employ in the present work are inspired by the insightful contributions of Landis from the '60s \cite{Landisell}, where he obtained what is nowadays referred to as the \emph{growth lemma} for nonnegative subsolutions of uniformly elliptic equations, assuming that the eigenvalue ratio is close to 1. Glagoleva \cite{glago} established analogous results for uniformly parabolic equations. We refer the reader to the book \cite{Landis} for an exposition of these ideas. In accordance with the literature on ultraparabolic equations, we present our results for operators more general than $\mathcal{K}$ which enjoy invariance properties with respect to a homogeneous Lie group structure. We proceed to describe these operators in more detail and state our main results.

\subsection{Main Results}\label{results}

Fix $N\in\N$. Throughout the paper we denote by $z=(x,t)\in\RN\times\R$ a generic point in $\RNu$. The spatial differential operators will be denoted $\nabla=\nabla_x$,  $D^2=D^2_x$. Fix $p_0, n \in \N$, with $1\leq p_0< N$ and $n\geq 1$. Let $\mathbb{I}_{p_0}$ denote the $p_0 \times p_0$ identity matrix. For some open set $\Omega \subseteq \RNu$, we consider the class of operators
\begin{equation}\label{varintroL}
\elle_A=\trace\left(  A(z) D^2\right)  +\left\langle x, B\nabla\right\rangle - \partial_t \qquad z \in \Omega,
\end{equation}
where $A(z)\in\R^{N\times N}$ is a symmetric nonnegative definite matrix which takes the block form
\begin{equation}\label{A}
A(z)=
\begin{bmatrix}
\mathbb{A}(z) & 0\\
0 & 0
\end{bmatrix}\quad\mbox{ with }\quad \mathbb{A}(z) \in \R^{p_0 \times p_0},
\end{equation}
and
\begin{equation}\label{B}
B=
\begin{bmatrix}
0 & \mathbb{B}_{1} & 0 & \ldots & 0 \\
0  & 0  & \mathbb{B}_{2}  & \ldots  & 0  \\
\vdots & \vdots & \ddots & \ddots & \vdots\\
0 & 0 & 0 & \ldots  & \mathbb{B}_{n} \\
0 & 0 & \ldots & 0  & 0 
\end{bmatrix} \in \R^{N \times N}
\end{equation}
where, for $j = 1, \ldots, n$, $\mathbb{B}_{j}$ is a $p_{j-1}\times p_{j}$ block of rank $p_{j}$, $p_{0}\geq p_{1}\geq \ldots \geq p_{n}\geq1$ and $p_{0}+p_{1}+ \ldots +p_{n}=N$. The matrix $\mathbb{A}(z)$ is assumed to be uniformly positive definite; that is, there exist constants $\lambda, \Lambda > 0$ such that
\begin{equation}\label{ellipticity}
\l\mathbb{I}_{p_0}\leq \mathbb{A}(z) \leq \L\mathbb{I}_{p_0} \qquad\mbox{ for all }z \in \O.
\end{equation}
Notice that the class of operators \eqref{prototype} corresponds to the choices $N=2d, p_0=d, n=1, \mathbb{B}_1=\mathbb{I}_d$.

The conditions on $A(\cdot)$ and $B$ endow the operators $\elle_A$ with rich algebraic properties. As a matter of fact, in the case of a constant matrix $\mathbb{A}$, the operator is of H\"ormander type and the fundamental solution can be written explicitly \cite{Ho, K82}. Moreover, it has been shown in \cite{LP} that this operator is invariant under the action of a homogeneous Lie group, with homogeneous dimension
\begin{equation}\label{defQ}
Q+2:=p_{0}+ 3 p_{1}+...+ (2n +1 ) p_{n} + 2.
\end{equation}
We remark that the presence of homogeneity is tied to the upper triangular form \eqref{B} of the matrix $B$. The group structure allows one to define a homogeneous norm and corresponding cylinder-like sets 
$$\Q^{t_1,t_2}_{r}(z_0), \quad\mbox{for }z_0\in \RNu,\, t_1,t_2\in\R,\, r>0.$$ 
We refer to Section \ref{notcon} for a precise description of all these notions.

To establish Harnack's inequality for the aforementioned operators \eqref{varintroL}, we will assume that the matrix coefficients $A(\cdot)$ satisfy either one of the following hypotheses:
\begin{itemize}
\item[(H1)]\label{H1} {\bf Cordes-Landis assumption:} The coefficients $A(\cdot)$ satisfy the condition \eqref{ellipticity} with
$$\frac{\L}{\l}<1+\frac{2}{Q}.$$
\item[(H2)]\label{H2} {\bf Uniform continuity in $\O$:} The coefficients $A(\cdot)$ admit a uniform modulus of continuity $\omega$ in $\Omega$ (see Definition \ref{modulusofcontinuity}).
\end{itemize}
We can now state our main results. Any constant that depends solely on $B, Q, n, \lambda, \Lambda$ will henceforth be referred to as a \emph{structural} constant.

\begin{theorem}\label{harnackineqH1}(Harnack Inequality under \hyperref[H1]{H1}) Suppose $\elle_A$ satisfies the Cordes-Landis condition \hyperref[H1]{H1}. There exist structural constants $b_B, K, \sigma_0, C > 0$ with $K>\sigma_0$ such that, for all $\Q_{Kr}^{-b_B r^2,0}(z_0)\Subset\Omega$ and $u \in C^2(\Omega)$ satisfying
$$u\geq 0\quad \text{ and } \quad \mathcal{L}_A u = 0\quad\mbox{ in }\,\,\, \Q_{Kr}^{-b_B r^2,0}(z_0),$$
we have
\begin{equation}\label{harnackH1}
\sup_{\Q_r^-} u \leq C \inf_{\Q_r^+} u, 
\end{equation}
where $\Q_r^- := \Q_{\frac{\sigma_0}{2}r}^{-\frac{3b_B}{4}r^2,-\frac{b_B}{2}r^2}(z_0)$ and $\Q_r^+ := \Q_{\frac{\sigma_0}{2}r}^{-\frac{b_B}{4}r^2,0}(z_0)$.
\end{theorem}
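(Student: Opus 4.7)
The argument follows the Landis-Glagoleva philosophy adapted to the homogeneous Lie group associated to $\elle_A$: build explicit pointwise barriers whose sign is forced by the Cordes-Landis gap, deduce a growth lemma for nonnegative solutions, and iterate on the dilation scales to reach the two-sided bound \eqref{harnackH1}.

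\emph{Barrier construction.} Let $\Gamma_\l$ denote the explicit fundamental solution of the constant-coefficient model $\elle_{\l\mathbb{I}_{p_0}}$ (\cite{Ho,K82}). With the pole placed at $z_0$, consider $\psi(z)=\Gamma_\l^{-\alpha}(z_0^{-1}\circ z)$, a natural object with respect to the group translation. Using $\elle_{\l\mathbb{I}_{p_0}}\Gamma_\l=0$ to eliminate the drift and time derivative, a direct computation reduces $\elle_A\psi$ to
$$\alpha(\alpha+1)\,\Gamma_\l^{-\alpha-2}\la A\nabla\Gamma_\l,\nabla\Gamma_\l\ra\;-\;\alpha\,\Gamma_\l^{-\alpha-1}\,\trace\!\big((\mathbb{A}-\l\mathbb{I}_{p_0})D^2_v\Gamma_\l\big).$$
Schematically, the first summand is positive and of order $\l$, while the second is of order $(\L-\l)$ times an anisotropic trace whose effective dimension is $Q$ as in \eqref{defQ}. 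The hypothesis $\L/\l<1+2/Q$ in \hyperref[H1]{H1} is exactly what permits a choice of $\alpha$ making the positive term dominate on a suitable annular region, so that $\psi$ is a classical subsolution there; an analogous construction yields supersolution barriers.

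\emph{Growth lemma, iteration, and chaining.} Inserted into a comparison argument on cylinders $\Q^{t_1,t_2}_r(z_0)$, the barriers produce the Landis-type growth lemma: there exist structural $\theta\in(0,1)$ and $M>1$ so that the superlevel sets of a nonnegative super/sub-solution of $\elle_A$ on $\Q_{Kr}^{-b_B r^2,0}(z_0)$ decay on $\Q_r^{\pm}$ by the fixed factor $\theta$ each time the threshold is multiplied by $M$. A Krylov-Safonov-type iteration through the group dilations converts this into the weak Harnack bounds
$$\sup_{\Q_r^-}u\leq C\bigg(\frac{1}{|\Q_r^-|}\int_{\Q_r^-}u^\e\bigg)^{\!\!1/\e},\qquad \bigg(\frac{1}{|\Q_r^+|}\int_{\Q_r^+}u^\e\bigg)^{\!\!1/\e}\leq C\inf_{\Q_r^+}u,$$
valid respectively for nonnegative sub- and supersolutions. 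Finally, a chain of $\Q$-cylinders connecting $\Q_r^-$ to $\Q_r^+$ through the drift flow $e^{sB^T}$ glues the two estimates into \eqref{harnackH1}; the structural constants $b_B,K,\sigma_0$ are forced to be the common scale at which every step is valid.

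\emph{Main obstacle.} The algebraic step of the barrier computation is short, but its implementation is delicate because the underlying geometry is non-Euclidean: a usable ABP principle is unavailable at this level of regularity (which is exactly why the bounded-measurable case remains open), the cylinders $\Q_r^\pm$ are anisotropic under the group dilations, and the past/future asymmetry in the statement reflects hypoellipticity. The hard point is to choose $b_B,K,\sigma_0$ compatibly with (i)~positivity of the barrier on an entire annular $\Q$-region with the Cordes-Landis margin exploited uniformly in scale, (ii)~the measure-theoretic estimate in the growth lemma, and (iii)~a chaining from $\Q_r^-$ to $\Q_r^+$ that tracks the drift.
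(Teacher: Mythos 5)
Your proposal takes a genuinely different route from the paper, but it has gaps that would prevent it from closing.

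The paper's barrier is not a pointwise power of the fundamental solution: it is the \emph{potential}
$U_E(z)=\int_E \Gsb(\zeta^{-1}\circ z)\,d\zeta$
generated by a Gaussian-type kernel $\Gsb$ whose exponent $\beta$ and time-power $s$ are \emph{decoupled} (the paper fixes $s=\Lambda/\lambda$, $\beta=\lambda$ under \hyperref[H1]{H1}, see \eqref{defh1bs}). The entire point of integrating over the set $E:=\Q^3_r\setminus D$ is that the height of the barrier scales like $|E|/(b_B r^2)^{sQ/2}$, which is precisely how the measure of the complement enters the growth lemma of Theorem \ref{growthlemma}. Your pointwise barrier $\psi=\Gamma_\lambda^{-\alpha}(z_0^{-1}\circ\cdot)$ has no such tunable dependence on the measure of a set; you never explain how comparing $u$ against a fixed scalar function yields the measure-theoretic inequality $\sup_D u\geq(1+\eta\,|\Q^3_r\setminus D|/|\Q^3_r|)\sup_{D\cap\Q^2_r}u$, and this step is the crux of a Landis-type argument. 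Without it there is no quantitative growth lemma to iterate.

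There is also an issue with the claimed role of the Cordes--Landis gap in your barrier computation. Using $\elle_{\l\mathbb{I}_{p_0}}\Gamma_\l=0$ and the explicit Gaussian derivatives as in \eqref{nabG}--\eqref{HG}, the two terms in your display reorganize into
$$\elle_A\psi=\frac{\alpha}{4\beta^2}\,\Gamma_\l^{-\alpha}\Big[\alpha\,\langle A\,\Cmu x,\Cmu x\rangle+\lambda\,|I_0\Cmu x|^2+2\beta\,\trace\big((\mathbb{A}-\lambda\mathbb{I}_{p_0})(\Cmu)_{vv}\big)\Big],$$
all three summands being nonnegative for any $\alpha>0$ by $\mathbb{A}\geq\lambda\mathbb{I}_{p_0}$ alone. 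So $\psi$ is a subsolution without any use of the gap $\Lambda/\lambda<1+2/Q$; your statement that ``the hypothesis H1 is exactly what permits a choice of $\alpha$'' is not substantiated by your computation, and the restriction $s<1+2/Q$ in the paper is instead what guarantees that the potential $U_E$ is \emph{finite} (Lemma \ref{lemdd}) and that the exponent in \eqref{defbar} can be split into $s$ and $\beta$ consistently with the subsolution property of Lemma \ref{lemuu}. In other words, you have the right feeling that the gap enters arithmetically, but you have located it in the wrong place.

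Finally, your passage from a growth lemma to the two-sided bound via $L^\epsilon$ weak Harnack estimates is not the route of the paper and, as you yourself flag, would normally lean on an ABP/Krylov--Safonov measure estimate that is precisely what is unavailable here. The paper instead goes directly from Theorem \ref{growthlemma} to Lemma \ref{lemma91} (a quantitative ``small measure implies large growth'' statement obtained by sliding the growth lemma along a strip of expanding cylinders, controlled by Lemma \ref{incylinder}), and then runs a Landis-style dichotomy on the measure of $G=\{u>1\}\cap\Q^3_r$: if $|G|$ is large a single application of the growth lemma to $1-u$ gives the lower bound; if $|G|$ is small, an iteration through nested cylinders $\Q^{(s_\ell)}$ and the geometric growth factor $M/2$ forces termination after finitely many steps, at which point one last application of the growth lemma closes the estimate. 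None of this machinery appears in your sketch, and the chaining you invoke at the end (Harnack chains along the drift flow) is actually used in the paper only for the H2 case, Theorem \ref{harnackineqH2}, to pass from small to unit radii. For the H1 case the statement is scale-invariant and no chaining is needed.
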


\begin{theorem}\label{harnackineqH2}(Harnack Inequality under \hyperref[H2]{H2}) Suppose $\elle_A$ satisfies the uniform continuity assumption \hyperref[H2]{H2} in $\Omega$, with modulus of continuity $\omega$. There exist positive constants $b_B, K, \sigma, C > 0$ depending on $\omega$ and on structural constants such that, for all $\Q_{Kr}^{-b_B r^2,0}(z_0)\Subset\Omega$ with $0<r\leq 1$ and $u \in C^2(\Omega)$ satisfying
$$u\geq 0\quad \text{ and } \quad \mathcal{L}_A u = 0\quad\mbox{ in }\,\,\, \Q_{Kr}^{-b_B r^2,0}(z_0),$$
we have
\begin{equation}\label{harnackH2}
\sup_{\Q_r^-} u \leq C \inf_{\Q_r^+} u, 
\end{equation}
where $\Q_r^- := \Q_{\sigma r}^{-\frac{3b_B}{4}r^2,-\frac{5b_B}{8}r^2}(z_0)$ and $\Q_r^+ := \Q_{\sigma r}^{-\frac{b_B}{8}r^2,0}(z_0)$.
\end{theorem}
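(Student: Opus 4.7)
The plan is to deduce Theorem \ref{harnackineqH2} from Theorem \ref{harnackineqH1} through a freezing-and-normalization argument enabled by the uniform continuity of the coefficients. The core idea is that on a sufficiently small cylinder (of size controlled by $\omega$), a linear change of coordinates normalizing $\mathbb{A}$ to the identity at the base point brings the transformed operator into the Cordes-Landis regime of hypothesis \hyperref[H1]{H1}.

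I would fix a base point $z_0 \in \Omega$, set $\mathbb{A}_0 := \mathbb{A}(z_0)$, and consider the change of coordinates $x \mapsto M^{-1} x$ where $M := \mathrm{diag}(\mathbb{A}_0^{1/2}, I_{p_1}, \ldots, I_{p_n}) \in \R^{N \times N}$. A direct computation conjugates $\elle_A$ to an operator $\elle_{\tilde A}$ of the same form \eqref{varintroL} with normalized matrix $\tilde{\mathbb{A}}(z) = \mathbb{A}_0^{-1/2} \mathbb{A}(z) \mathbb{A}_0^{-1/2}$ (so $\tilde{\mathbb{A}}(z_0) = I_{p_0}$) and transformed drift $\tilde B = M^T B M^{-T}$. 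Since $M$ is block-diagonal with respect to the partition $(p_0, p_1, \ldots, p_n)$, $\tilde B$ preserves the block upper-triangular form \eqref{B}: $\tilde{\mathbb{B}}_1 = \mathbb{A}_0^{1/2} \mathbb{B}_1$ remains of rank $p_1$, while $\tilde{\mathbb{B}}_j = \mathbb{B}_j$ for $j \geq 2$. Moreover $M$ commutes with the group dilations (as the latter act by scalar multiples of $I_{p_0}$ on the first block), so the family of cylinders $\Q_r^{t_1,t_2}$ transforms covariantly; and the uniform ellipticity $\l I \leq \mathbb{A}_0 \leq \L I$ bounds $\|M^{\pm 1}\|$ by quantities depending only on $\l, \L$. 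Hence the structural constants for $\elle_{\tilde A}$---including those appearing in Theorem \ref{harnackineqH1} applied to $\elle_{\tilde A}$---can be controlled uniformly in $z_0$ by the original structural quantities $B, Q, n, \l, \L$.

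Next I would fix $\varepsilon > 0$ small (depending only on $Q$) so that any symmetric $\tilde{\mathbb{A}}$ with $\|\tilde{\mathbb{A}} - I_{p_0}\| \leq \varepsilon$ satisfies the Cordes-Landis ratio $\tilde\L/\tilde\l < 1 + 2/Q$. Uniform continuity then provides $r_\omega > 0$, depending only on $\omega, \l, \L$, such that $\|\tilde{\mathbb{A}}(z) - I_{p_0}\| \leq \varepsilon$ for every $z$ at group-distance at most $r_\omega$ from $z_0$. Setting $K := \min(K_{H1}, r_\omega)$, where $K_{H1}$ is the radial constant provided by Theorem \ref{harnackineqH1} applied uniformly over the normalized class, ensures that for every $0 < r \leq 1$ with $\Q_{Kr}^{-b_Br^2, 0}(z_0) \Subset \Omega$, the operator $\elle_{\tilde A}$ satisfies hypothesis \hyperref[H1]{H1} on the corresponding cylinder in the normalized coordinates. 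Theorem \ref{harnackineqH1} then yields the required $\sup/\inf$ comparison for $\tilde u(\tilde x, t) := u(M \tilde x, t)$ on the transformed cylinders, which reverts under $M$ to the Harnack inequality \eqref{harnackH2} on $\Q_r^{\pm}$ in the original coordinates.

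The main obstacle I anticipate is careful bookkeeping: (i) verifying that the $M$-transformation genuinely preserves the class of operators \eqref{varintroL}, the homogeneous Lie group structure, and the cylinder geometry---all with constants uniform in $z_0$; (ii) matching the specific cylinder parameters in \eqref{harnackH2}, in particular the time intervals $[-3b_B r^2 / 4, -5 b_B r^2 / 8]$ and $[-b_B r^2 / 8, 0]$, which are strictly shorter than those in \eqref{harnackH1}, possibly requiring a finite iteration of Theorem \ref{harnackineqH1} across adjacent sub-cylinders or mild reductions of $b_B$ and $\sigma$; and (iii) cleanly isolating the dependence of the final constants on $\omega$ (only through $r_\omega$) from the structural dependence on $B, Q, n, \l, \L$.
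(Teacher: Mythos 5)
Your freezing-and-normalization idea is a genuinely different route from the paper's. The paper does \emph{not} reduce \hyperref[H2]{H2} to \hyperref[H1]{H1} by conjugation; instead, it builds a new barrier with $A_0 = A(z_0)$ and parameters $s = 1+s_0$, $\beta = 2/(2+s_0)$ (Lemma \ref{barriersforH2}), proves a local growth lemma (Theorem \ref{growthlemmaH2}) valid only for small radii $r \leq \epsilon_0/K$, and then re-runs the arguments of Lemma \ref{lemma91} and Theorem \ref{harnackineqH1} at this scale. Your conjugation by $M = \mathrm{diag}(\mathbb{A}_0^{1/2}, I_{p_1},\ldots,I_{p_n})$ is algebraically consistent (it preserves the block structure, the $p_j$'s, hence $Q$, and commutes with the dilations) and, given the uniform bound $\lambda I \le \mathbb{A}_0 \le \Lambda I$, it is plausible that the structural constants $b_{\tilde B}$, $K_{\tilde B}$, etc.\ can be controlled uniformly in $z_0$; so for \emph{small} radii the two approaches should be equivalent in spirit, though yours requires substantial bookkeeping (cylinders and the homogeneous norm transform covariantly but are distorted by $\|M^{\pm 1}\|$, so the final $\sigma$, $b_B$ must be reduced to absorb this).

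However, there is a genuine gap: your argument only yields the Harnack inequality for radii up to a threshold set by the modulus of continuity, roughly $r \lesssim r_\omega/K_{H1}$, where $K_{H1}>1$ is the outer-cylinder stretch factor from Theorem \ref{harnackineqH1}. Since Theorem \ref{harnackineqH1} needs the Cordes--Landis ratio to hold on the \emph{larger} cylinder $\Q_{K_{H1}r}^{-b_Br^2,0}(z_0)$, you must require $K_{H1}r \le r_\omega$; the choice $K := \min(K_{H1}, r_\omega)$ does not circumvent this, because shrinking $K$ below $K_{H1}$ means the cylinder $\Q_{Kr}^{-b_Br^2,0}(z_0)$ on which $u$ solves the equation is no longer large enough to invoke Theorem \ref{harnackineqH1} with comparison cylinders at scale $r$. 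Theorem \ref{harnackineqH2} is stated for all $0<r\le 1$ with a single constant $C$ depending on $\omega$ and structural quantities, so the regime $r_\omega/K_{H1}<r\le 1$ must be handled. The paper does this via the Harnack-chain construction of Polidoro \cite{PoliA}: the small-radius inequality \eqref{toiterate} is iterated along chains connecting $\Q_r^-$ to $\Q_r^+$ inside $\Q_{Kr}^{-b_Br^2,0}(z_0)$, producing a constant $C$ that depends on $\epsilon_0$ (hence $\omega$) through the number of chain steps. Your proposal is silent on this step, and without it the claimed uniformity over $0<r\le 1$ does not follow.
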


We point out that Theorem \ref{harnackineqH1} is, to the best of our knowledge, the first regularity result for non-divergence form operators like $\mathcal{K}$ that is independent of the smoothness of the coefficients. Theorem \ref{harnackineqH2} also generalizes, in the case of the homogeneous operators \eqref{varintroL}, the Harnack inequality obtained in \cite{Poli} (see also \cite{DiP}) assuming H\"older continuity of the coefficients. 

The essential ingredients in the proof of Theorems \ref{harnackineqH1} and \ref{harnackineqH2} are, respectively, Theorem \ref{growthlemma} and Theorem \ref{growthlemmaH2}. These are the analogues of the classical growth lemma of Landis, and they establish pointwise-to-measure estimates for nonnegative subsolutions to $\elle_A$ in a quantitative manner. In order to establish these key estimates, we construct barriers using the potentials generated by kernels resembling the fundamental solution for constant coefficient operators. This involves a careful estimate of the aforementioned kernels in terms of the length scale of the cylinders. It is only in the construction of these barriers where we use the hypotheses \hyperref[H1]{H1} and \hyperref[H2]{H2}. Once the required pointwise-to-measure estimates are established, there are, by now, standard ways in the literature to proceed with the proof of Harnack's inequality. In this work, we have chosen to follow the general approach outlined by Landis in \cite{Landis}. For this strategy to succeed, we must deal with the non-standard nature of the cylinder-like sets $\Q^{t_1,t_2}_{r}(z_0)$.

The outline of this paper is as follows. In Section \ref{notcon}, we set up our notation and recall some properties of the relevant geometric objects. In Section \ref{secest}, we establish upper and lower bounds for the kernels \eqref{defbar}. We then use these kernels in Section \ref{barriers} to construct barriers for $\elle_A$ under the hypotheses \hyperref[H1]{H1} and \hyperref[H2]{H2} (see respectively subsections \ref{cordeslandiscondition} and \ref{continuityassumption}). In Section \ref{sec5}, we prove the growth lemmas (Theorems \ref{growthlemma} and \ref{growthlemmaH2}), and provide as application the oscillation decay and the H\"older continuity of solutions to $\elle_A u = 0$. Finally, in Section \ref{harn} we complete the proofs of Theorems \ref{harnackineqH1} and \ref{harnackineqH2}.

\section{Preliminaries}\label{notcon}

The block structure \eqref{B} on the matrix $B$ implies (see \cite[Section 2]{LP}) the following H\"ormander rank condition \cite{Ho}:
$${\rm{rank\,\, Lie\, }}\left\{\de_{x_1},\ldots,\de_{x_{p_0}},\left\langle x, B\nabla\right\rangle-\de_t\right\}(z)=N+1\quad\forall\,z\in\RNu.$$
In particular, for any constant matrix $A_0\in\R^{N\times N}$ with the block structure
\begin{equation}\label{azero}
A_0=
\begin{bmatrix}
\mathbb{A}_{0} & 0\\
0 & 0
\end{bmatrix}\quad\mbox{ with }\quad \l\mathbb{I}_{p_0}\leq \mathbb{A}_{0}\leq \L\mathbb{I}_{p_0},
\end{equation}
the operator 
$$\elle_0=\trace\left(  A_0 D^2\right)  +\left\langle x, B\nabla\right\rangle - \partial_t$$
is hypoelliptic. The stationary part of the operator $\elle_0$ is the infinitesimal generator of a Gaussian process with covariance matrix given by
$$C_0(t)=\int_{0}^{t} E(\sigma)A_0E^T(\sigma)\,d\sigma,$$
where
\begin{equation}\label{matrixexponential}
E(\sigma) :=\exp\left(-\sigma B^T\right), \qquad \sigma\in\R.
\end{equation}
The H\"ormander rank condition is actually equivalent (see \cite{Ho, LP}) to the following Kalman-type condition
$$C_0(t)>0\qquad\forall\,t>0.$$
Throughout the paper we will use the notation
\begin{equation}\label{Kalma0}
I_0=\begin{bmatrix}
\mathbb{I}_{p_0} & 0\\
0 & 0
\end{bmatrix}\in \R^{N \times N} \quad\mbox{ and }\quad C(t)=\int_{0}^{t} E(\sigma)I_0E^T(\sigma)\,d\sigma .
\end{equation}
The assumption \eqref{ellipticity} for the coefficient matrix $A(z)$ of the operators $\elle_A$ in \eqref{varintroL}-\eqref{A} is clearly equivalent to assuming
\begin{equation}\label{ellipticityofA}
\l I_0\leq A(z) \leq \L I_0 \qquad \text{ for all }z \in \O.
\end{equation}

Let us now describe the group structure mentioned in the Introduction. We refer the reader to \cite[Section 1]{LP} for a complete exposition. Recalling \eqref{matrixexponential}, the group law is given by

$$z  \circ\zeta  =\left(  \xi+E(\tau)  x,t+ \tau\right),\quad \mbox{for }z=\left(  x,t\right), \zeta=\left(  \xi,\tau\right)\in\RNu.$$
Moreover, recalling the $p_i$'s coming from the structure of $B$ in \eqref{B}, we can denote any $x \in \RN$ as
$$x=\left(x^{(p_0)},x^{(p_1)},\ldots,x^{(p_n)}\right)\in \R^{p_0}\times\R^{p_1}\times\cdots\times\R^{p_n} = \RN,$$
and we can define the family of group automorphisms $\left(\delta_r\right)_{r>0}$ as
\begin{eqnarray*}
\delta_r&:&\RNu\longrightarrow\RNu\\ 
\delta_r(x,t)&=&\left(r
x^{(p_0)},r^3
x^{(p_1)},\ldots,r^{2n+1}x^{(p_n)},r^2t\right).
\end{eqnarray*}
These will play the role of homogeneous dilations. For convenience, we also denote the spatial dilations by
$$D_r:\RN\longrightarrow\RN,\quad D_r(x) = \left(r
x^{(p_0)},r^3
x^{(p_1)},\ldots,r^{2n+1}x^{(p_n)}\right).$$
The fact that $\delta_r$ are automorphisms with respect to $\circ$ is encoded in the following commutation property (see \cite[equation (2.20)]{LP} and \cite{K82})
\begin{equation}\label{commu}
E(r^2\sigma)=D_rE(\sigma)D_{\frac{1}{r}}\quad\mbox{ for any }r>0\,\mbox{ and }\,\sigma\in\R.
\end{equation}
From this, one can deduce that the covariance matrix $C_0(t)$ satisfies the commutation relation
\begin{equation}\label{split}
C_0(t)= D_{\sqrt{t}} C_0(1) D_{\sqrt{t}}. 
\end{equation}
If $Q$ is the number defined in \eqref{defQ} and $|\cdot|$ denotes Lebesgue measure (both in $\RNu$ and $\RN$), then we have
\begin{equation}\label{invmeasure}
|\delta_r(E)|=r^{Q+2}|E|,\quad |D_r(F)|=r^{Q}|F|,\quad |z_0\circ E|=|E|
\end{equation}
for all $z_0\in\RNu$, $r>0$, and for any Lebesgue measurable sets $E\subset\RNu$, $F\subset\RN$. In \cite{LP} it is shown that the vector fields $\left\{\de_{x_1},\ldots,\de_{x_{p_0}},\left\langle x, B\nabla\right\rangle-\de_t\right\}$ are left-translation invariant and $\delta_r$-homogeneous (respectively of degree $1$ and $2$). Consequently, the operators $\elle_0$ are left-translation invariant and $\delta_r$-homogenous of degree $2$. 

One can associate to this homogeneous structure a family of cylinder-like sets. Denoting also the Euclidean norms in $\R^N$, $\R^{p_k}$ or $\R$ by $\left|\cdot \right|$, we can define the norms $\left|\cdot \right|_B:\RN\longrightarrow\R^+$ and  $\left\|\cdot\right\|_B:\R^{N+1}\longrightarrow\R^+$ by
$$|x|_B=\sum_{i=0}^n\left|x^{(p_i)}\right|^{\frac{1}{2i+1}},\qquad\mbox{for }x=\left(x^{(p_0)},\ldots,x^{(p_n)}\right)\in\R^{p_0}\times\cdots\times\R^{p_n}=\RN,$$
$$\left\|z\right\|_B=|x|_B + |t|^{1/2}, \qquad\mbox{for } z = (x,t) \in \mathbb{R}^{N+1}.$$
The subscript $B$ is used to distinguish the homogeneous norm $\left\|\cdot \right\|_B$ from the matrix norm $\left\|\cdot \right\|$.
Note that $\left|\cdot \right|_B$ and $\left\|\cdot\right\|_B$ are respectively $D_r$-homogeneous and $\delta_r$-homogeneous functions of degree $1$.
The homogeneous ball of radius $r>0$ centered at $0$ is the set
$$B_{r}(0): =\left\{x\in\RN\,:\,|x|_B <  r\right\}=D_{r}\left(B_1(0)\right).$$
The cylinder-like sets centered at $0$ are defined as
$$\Q^{t_1, t_2}_{r} = B_{r}(0) \times (t_1, t_2)$$
where $r>0$ and $t_1<t_2\in\R$. Cylinder-like sets centered at an arbitrary point $z_0 \in \RNu$ are defined as
$$\Q^{t_1, t_2}_{r}(z_0): =z_0\circ \Q^{t_1, t_2}_{r}.$$
It is clear from \eqref{invmeasure} and the composition and dilation laws that, for any $b>0$,
$$\left|\Q^{t_1, t_1+br^2}_{r}(z_0)\right|=r^{Q+2}\left|\Q^{0, b}_{1}\right|\quad\mbox{ for all }z_0\in\RNu, t_1\in\R, r>0.$$
The notion of parabolic boundary of a cylinder can be naturally extended to this setting, and is defined as
$$\de_p\Q^{t_1, t_2}_{r}:=\left(B_r(0)\times\{t_1\}\right)\cup \left(\de B_r(0)\times [t_1,t_2] \right)\quad\mbox{ and }\quad\de_p\Q^{t_1, t_2}_{r}(z_0):=z_0\circ \de_p\Q^{t_1, t_2}_{r}.$$
It is easy to check that $\de_p\Q^{t_1, t_2}_{r}(z_0)=\overline{\de\Q^{t_1, t_2}_{r}(z_0)\cap\{t<t_2+t_0\}}$.
We can now state the analogue of the parabolic weak maximum principle for the operators $\elle_A$ in \eqref{varintroL}, whose proof is, by now, classical for degenerate-parabolic equations. 
\begin{center} Let $T\in\R$ and let $D\subset\RNu$ be a bounded open set; if $v\in C^2(D)\cap C\left(\overline{D}\right)$ satisfies
\begin{equation}\label{MP}
\begin{cases}
\elle_A v\geq 0 & \text{ in }\,D\cap\{t<T\},\\
v\leq 0 & \text{ on }\,\de D\cap\{t<T\},
\end{cases}
\qquad\mbox{ then }\qquad v\leq 0 \,\,\text{ in }\,\, D\cap\{t<T\}.
\end{equation}
\end{center}
We recall a number of essential relations between the homogeneous norm $|\cdot|_B$ and the Euclidean norm that will be used throughout the paper. Some of these can already be found in \cite{KLT, Manfr, Poli}; we collect and prove them in the following lemma for the reader's convenience.
\begin{lemma}\label{normsarecomparable} The following properties hold:
\begin{itemize}
\item[(i)] The triangle inequality holds in the norm $|\cdot|_B$:
\begin{equation}\label{triangleB}
|x+\xi|_B\leq |x|_B + |\xi|_B\qquad \forall\,x,\xi\in\RN.
\end{equation}
\item[(ii)] Denoting $\sigma_0=\min_{\left|x\right|=1}{|x|_B}$ and $\bar{\sigma}=\max_{\left|x\right|=1}{|x|_B}$ we have
\begin{equation}\label{confrhomnonhom}
\sigma_0\min{\left\{\left|x\right|, \left|x\right|^{\frac{1}{2n+1}}\right\}}\leq|x|_B\leq \bar{\sigma}\max{\left\{\left|x\right|, \left|x\right|^{\frac{1}{2n+1}}\right\}}\qquad \forall\,x\in\RN.
\end{equation}
\item[(iii)] There exists a structural constant $c(n,B)>0$ such that
\begin{equation}\label{benne}
\left|\left(E(t)-\mathbb{I}_N\right) x\right|_B\leq  c(n,B)\max\left\{|x|^{\frac{1}{3}},|x|^{\frac{1}{2n+1}}\right\}\max\left\{|t|^{\frac{1}{2n+1}},|t|^{\frac{n}{2n+1}}\right\} \qquad \forall\,x\in\RN,\,\,\forall\, t\in\R.
\end{equation}
\end{itemize}
\end{lemma}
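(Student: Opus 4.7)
I would handle the three parts in order, each reducing to a concrete block-structural computation.

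For part (i), I would work blockwise: the Euclidean triangle inequality gives $|x^{(p_i)}+\xi^{(p_i)}|\leq|x^{(p_i)}|+|\xi^{(p_i)}|$ for every $i=0,\ldots,n$, and since $t\mapsto t^{1/(2i+1)}$ is concave on $[0,\infty)$ with value $0$ at $0$, it is subadditive, so $(a+b)^{1/(2i+1)}\leq a^{1/(2i+1)}+b^{1/(2i+1)}$ for $a,b\geq 0$. Summing these blockwise inequalities over $i$ yields \eqref{triangleB}.

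For part (ii), I would use the $D_r$-homogeneity of $|\cdot|_B$ to reduce to the unit Euclidean sphere, where the constants $\sigma_0$ and $\bar{\sigma}$ are defined. Given $x\neq 0$, the map $r\mapsto|D_{1/r}x|^2=\sum_i|x^{(p_i)}|^2/r^{2(2i+1)}$ is continuous and strictly decreases from $+\infty$ to $0$, so there is a unique $r>0$ with $|D_{1/r}x|=1$. Setting $y:=D_{1/r}x$ gives $\sigma_0\leq|y|_B\leq\bar{\sigma}$, and by homogeneity $|x|_B=r|y|_B$; everything therefore reduces to sandwiching $r$ between $|x|$ and $|x|^{1/(2n+1)}$. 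Since the exponents $2(2i+1)$ range over $[2,\,2(2n+1)]$, the defining identity for $r$, combined with a split into the two cases $r\leq 1$ and $r\geq 1$, yields respectively $r^{2(2n+1)}\leq|x|^2\leq r^2$ and $r^2\leq|x|^2\leq r^{2(2n+1)}$. In either case $r$ lies between $\min\{|x|,|x|^{1/(2n+1)}\}$ and $\max\{|x|,|x|^{1/(2n+1)}\}$, which is exactly \eqref{confrhomnonhom}.

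For part (iii), I would exploit the nilpotency of $B$. Because $B^T$ is strictly block upper-triangular with the only nonzero off-diagonal blocks $\mathbb{B}_j^T$ sending block $p_{j-1}$ into block $p_j$, one has $(B^T)^{n+1}=0$, so the exponential series terminates and $E(t)-\mathbb{I}_N=\sum_{k=1}^{n}\frac{(-t)^k}{k!}(B^T)^k$. Moreover, the $p_0$-block of $(B^T)^k x$ vanishes for every $k\geq 1$; hence the $p_j$-block of $(E(t)-\mathbb{I}_N)x$ is nontrivial only for $j\geq 1$ and equals a sum, over $k=1,\ldots,j$, of terms of the form $t^k\,\mathbb{B}_j^T\cdots\mathbb{B}_{j-k+1}^T\,x^{(p_{j-k})}$, whose Euclidean norm is bounded by $C(B)\,|t|^k\,|x^{(p_{j-k})}|$. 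Raising to the power $1/(2j+1)$ and using subadditivity produces a finite sum of factors $|t|^{k/(2j+1)}\,|x^{(p_{j-k})}|^{1/(2j+1)}$ with $1\leq k\leq j$ and $1\leq j\leq n$. A case split on whether $|x|,|t|\leq 1$ or $\geq 1$, together with the fact that $1/(2j+1)\in[1/(2n+1),1/3]$ for $j\geq 1$ and $k/(2j+1)\in[1/(2n+1),n/(2n+1)]$ throughout the same index range, then shows that each such factor is dominated by $\max\{|x|^{1/3},|x|^{1/(2n+1)}\}\cdot\max\{|t|^{1/(2n+1)},|t|^{n/(2n+1)}\}$, yielding \eqref{benne}.

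The main obstacle is the bookkeeping in part (iii): one has to verify that every exponent produced by the finite expansion of $E(t)-\mathbb{I}_N$ lands in the admissible range that allows both the $x$-factor and the $t$-factor to be absorbed by the uniform majorants on the right-hand side of \eqref{benne}. In particular, the appearance of $|x|^{1/3}$ (rather than $|x|$) is forced precisely by the vanishing of the $p_0$-block of $(E(t)-\mathbb{I}_N)x$, so that the smallest raising exponent ever acting on an $x$-component is $1/3$, coming from the $p_1$-block.
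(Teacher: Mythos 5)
Your proposal is correct throughout. Parts (i) and (iii) follow essentially the same line of argument as the paper: for (i), blockwise Euclidean triangle inequality plus subadditivity of $t \mapsto t^{1/(2i+1)}$; for (iii), expansion of $E(t) = \exp(-tB^T)$ using the nilpotency of $B^T$, observing that the $p_0$-block of $(E(t)-\mathbb{I}_N)x$ vanishes, and verifying that every exponent lands in the ranges $[1/(2n+1),1/3]$ (for $x$) and $[1/(2n+1),n/(2n+1)]$ (for $t$), so each term is dominated by the product of the two maxima.

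Part (ii), however, takes a genuinely different route from the paper. The paper normalizes $x$ by its Euclidean norm and estimates $|x|_B / \max\{|x|,|x|^{1/(2n+1)}\}$ term by term, using the elementary inequality $|x|^{1/(2i+1)} \leq \max\{|x|, |x|^{1/(2n+1)}\}$ to pass from the common denominator $\max\{\ldots\}$ to the $i$-dependent denominator $|x|^{1/(2i+1)}$, and then recognizing the resulting sum as $|x/|x||_B \in [\sigma_0,\bar\sigma]$. You instead exploit the $D_r$-homogeneity of $|\cdot|_B$: you find the unique $r>0$ with $|D_{1/r}x|=1$, write $|x|_B = r\,|D_{1/r}x|_B$, and reduce to sandwiching $r$ between $\min\{|x|,|x|^{1/(2n+1)}\}$ and $\max\{|x|,|x|^{1/(2n+1)}\}$ by a case split on $r\lessgtr 1$. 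Your argument is a touch more structural, making the homogeneity explicit, while the paper's is slightly more economical since it avoids solving for the normalizing parameter. Both are correct and of comparable length.

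Two cosmetic slips in your write-up of (iii), neither affecting the argument: with the block ordering used in the paper, $B$ is strictly block \emph{upper}-triangular, so $B^T$ is strictly block \emph{lower}-triangular (it is still nilpotent with $(B^T)^{n+1}=0$, which is all you use); and $1/3$ is the \emph{largest} exponent $1/(2j+1)$ appearing (attained at $j=1$), not the smallest — the smallest is $1/(2n+1)$ — which is precisely why the majorant takes the form $\max\{|x|^{1/3},|x|^{1/(2n+1)}\}$.
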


\begin{proof} 
\begin{itemize}
\item[(i)] This follows from the subadditivity of $|\cdot|^p$ for $0 < p < 1$.
\item[(ii)] For $x=0$ this is trivial. For any $x\neq 0$, we have
$$\frac{|x|_B}{\max{\left\{\left|x\right|,\left|x\right|^{\frac{1}{2n+1}}\right\}}}\leq\sum_{i=0}^n{\frac{\left|x^{(p_i)}\right|^{\frac{1}{2i+1}}}{\left|x\right|^{\frac{1}{2i+1}}}}=\sum_{i=0}^n{\left|\left(\frac{x}{\left|x\right|}\right)^{(p_i)}\right|^{\frac{1}{2i+1}}}=\left|\frac{x}{\left|x\right|}\right|_B \leq\bar{\sigma}.$$
while on the other side,
$$\frac{\left|x\right|_B}{\min{\left\{\left|x\right|,\left|x\right|^{\frac{1}{2n+1}}\right\}}}\geq\sum_{i=0}^n{\frac{\left|x^{(p_i)}\right|^{\frac{1}{2i+1}}}{\left|x\right|^{\frac{1}{2i+1}}}}=\sum_{i=0}^n{\left|\left(\frac{x}{\left|x\right|}\right)^{(p_i)}\right|^{\frac{1}{2i+1}}}=\left|\frac{x}{\left|x\right|}\right|_B \geq\sigma_0.$$
\item[(iii)]  Fix any $x\in\RN$, $t\in\R$. By the upper triangular form of $B$, we have $\left(E(t)x\right)^{(p_0)}=x^{(p_0)}$ and for any $i\in\{1,\ldots,n\}$
$$\left(E(t)x\right)^{(p_i)}=x^{(p_i)}+\sum_{k=1}^i\frac{(-t)^k}{k!}\left(\mathbb{B}^T_i\mathbb{B}^T_{i-1}\cdots\mathbb{B}^T_{i-k+1}\right)x^{(p_{i-k})}.$$
Hence, by denoting $M_B=\max_{i}\left\|\mathbb{B}^T_i\right\|$, we get
\begin{eqnarray*}
\left|\left(E(t)-\mathbb{I}_N\right) x\right|_B&=&\sum_{i=1}^n\left|\sum_{k=1}^i\frac{(-t)^k}{k!}\left(\mathbb{B}^T_i\mathbb{B}^T_{i-1}\cdots\mathbb{B}^T_{i-k+1}\right)x^{(p_{i-k})}\right|^{\frac{1}{2i+1}}\\
&\leq& \sum_{i=1}^n\sum_{k=1}^i|t|^{\frac{k}{2i+1}}M_B^{\frac{k}{2i+1}}\left|x^{(p_{i-k})}\right|^{\frac{1}{2i+1}}\\
&\leq&\frac{n(n+1)}{2}\max\left\{M_B^{\frac{1}{2n+1}},M_B^{\frac{n}{2n+1}}\right\}\max\left\{|x|^{\frac{1}{3}},|x|^{\frac{1}{2n+1}}\right\}\max\left\{|t|^{\frac{1}{2n+1}},|t|^{\frac{n}{2n+1}}\right\}.
\end{eqnarray*}
\end{itemize}
\end{proof}
It is known \cite{Ho, K82} that the fundamental solution of $\elle_0$ with pole at the origin is given by 
\begin{equation}\label{fondfrozen}
\Gamma_0 \left(x,t\right)  =\left\{
\begin{tabular}
[c]{lll}
$0$ & $\text{for }t\leq0,$\\ \mbox{}\\
$\frac{(4\pi)^{-\frac{N}{2}}}{\sqrt{\deter(C_0(t))}} \exp \left(  -\frac{1}{4}  \langle C_0^{-1} (t) x,   x \rangle \right)=\frac{c_0}{t^{\frac{Q}{2}}} \exp \left(  -\frac{1}{4}  \langle C_0^{-1} (1) D_{\frac{1}{\sqrt{t}}}  x,  D_{\frac{1}{\sqrt{t}}}  x \rangle \right)  $ & $\text{for } t>0,$
\end{tabular}
\ \right. 
\end{equation}
where $c_0=(4\pi)^{-\frac{N}{2}}\left(\deter(C_0(1))\right)^{-\frac{1}{2}}$. By the translation invariance of $\elle_0$, one can relocate the pole to any desired point. Note also that $\Gamma_0$ is $\delta_r$-homogeneous of degree $-Q$. The fundamental solution $\Gamma_0$ and its level sets play an essential role in the proof of Harnack's inequality for the operator $\elle_0$ established in \cite{GL90, LP}. In the sequel, it will be necessary for us to have good estimates on the quadratic form $\langle C_0^{-1} (t) x, x \rangle$. We begin by defining a relevant structural constant $b_B$. Since
$E(\sigma)=e^{-\sigma B^T}\rightarrow\mathbb{I}_N$ as $\sigma\rightarrow 0$,
we can define the constant $b_B$ such that
\begin{equation}\label{bibi}
0<b_B\leq\left(\frac{\sigma_0}{\bar{\sigma}}\right)^2\qquad\mbox{and}\qquad \left\|E(\sigma)\right\|\leq 2\,\,\,\,\mbox{ for all }|\sigma|\leq b_B.
\end{equation}
Here the constants $\sigma_0\leq\bar{\sigma}$ are the ones from \eqref{confrhomnonhom}.
\begin{lemma}\label{evalsofC}
There exist structural constants $\L_1, \l_1$ such that
\begin{equation}\label{bici}
\frac{1}{\L_1 t}\mathbb{I_N}\leq \Cmu \leq \frac{1}{\l_1 t^{2n+1}} \mathbb{I_N}\quad\mbox{ for all }0<t\leq b_B.
\end{equation}
\end{lemma}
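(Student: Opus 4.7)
The plan is to exploit the exact scaling identity \eqref{split}, applied with the admissible choice $\mathbb{A}_0=\mathbb{I}_{p_0}$ (so that $A_0=I_0$ and consequently $C_0(t)=C(t)$), in order to reduce the inequality to a pointwise comparison with the fixed matrix $C^{-1}(1)$, where all dependence on $t$ is absorbed into the spatial dilations $D_{1/\sqrt{t}}$.

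\textbf{Step 1 (bounds for $C^{-1}(1)$).} Because $\elle_0$ with $A_0=I_0$ is hypoelliptic, the Kalman-type condition stated after \eqref{matrixexponential} gives $C(1)>0$. Since $C(1)$ is a symmetric positive definite matrix depending only on $B$ and $n$, its minimal and maximal eigenvalues yield structural constants $0<\mu_1\leq \mu_2$ such that
\begin{equation*}
\mu_1\,\mathbb{I}_N \leq C^{-1}(1) \leq \mu_2\,\mathbb{I}_N.
\end{equation*}

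\textbf{Step 2 (dilation identity).} Applying \eqref{split} with $A_0=I_0$ gives $C(t)=D_{\sqrt{t}}\,C(1)\,D_{\sqrt{t}}$, and since $D_r$ is invertible with $D_r^{-1}=D_{1/r}$, inversion yields
\begin{equation*}
C^{-1}(t) = D_{1/\sqrt{t}}\,C^{-1}(1)\,D_{1/\sqrt{t}}.
\end{equation*}
Testing on an arbitrary $v\in\RN$, decomposed as $v=(v^{(p_0)},\ldots,v^{(p_n)})$, we obtain
\begin{equation*}
\langle C^{-1}(t) v, v\rangle = \langle C^{-1}(1)\,D_{1/\sqrt{t}} v,\, D_{1/\sqrt{t}} v\rangle,
\qquad |D_{1/\sqrt{t}} v|^2 = \sum_{i=0}^n t^{-(2i+1)}\,|v^{(p_i)}|^2.
\end{equation*}

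\textbf{Step 3 (range of $t$).} For $0<t\leq b_B$, the constraint \eqref{bibi} and the trivial inequality $\sigma_0\leq\bar{\sigma}$ force $t\leq b_B\leq 1$, so that $t^{-(2i+1)}$ is increasing in $i$. Hence
\begin{equation*}
\frac{|v|^2}{t} \leq |D_{1/\sqrt{t}} v|^2 \leq \frac{|v|^2}{t^{2n+1}}.
\end{equation*}
Combining this with Step 1 gives
\begin{equation*}
\mu_1\,\frac{|v|^2}{t} \leq \langle C^{-1}(t) v, v\rangle \leq \mu_2\,\frac{|v|^2}{t^{2n+1}},
\end{equation*}
which is the claim with $\Lambda_1:=1/\mu_1$ and $\lambda_1:=1/\mu_2$.

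There is essentially no obstacle: the only point that requires a moment's attention is the verification that $b_B\leq 1$, which is needed to conclude the monotonicity $t^{-1}\leq t^{-(2i+1)}\leq t^{-(2n+1)}$ across $i\in\{0,\ldots,n\}$. This is immediate from the very definition of $\sigma_0$ and $\bar{\sigma}$ in \eqref{confrhomnonhom}.
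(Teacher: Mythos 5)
Your argument contains one genuine gap, which also obscures an otherwise cleaner route than the paper's. The lemma is stated for $\Cmu = C_0^{-1}(t)$ with $C_0(t) = \int_0^t E(\sigma)A_0E^T(\sigma)\,d\sigma$ and a \emph{general} $A_0$ satisfying \eqref{azero}; this generality is essential, since the lemma is later applied in Section \ref{continuityassumption} with $A_0=A(z_0)\neq I_0$. By fixing $\mathbb{A}_0=\mathbb{I}_{p_0}$ at the outset, you only prove the bounds for $C^{-1}(t)$, not for $C_0^{-1}(t)$. You never pass back to the general case. The repair is short but must be stated: either observe that $\l\,C(t)\leq C_0(t)\leq \L\,C(t)$ (immediate from $\l I_0\leq A_0\leq \L I_0$), whence $\tfrac{1}{\L}C^{-1}(t)\leq C_0^{-1}(t)\leq\tfrac{1}{\l}C^{-1}(t)$ and the constants $\l_1,\L_1$ simply absorb $\l,\L$; or run your Step 1 with the eigenvalues of $C_0(1)$ (which are controlled in terms of $\l,\L$ and the eigenvalues of $C(1)$) rather than those of $C(1)$.

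Apart from that omission, your Steps 2--3 are correct and in fact sharper and more elementary than the paper's proof. Where the paper bounds $|D_{\sqrt t}v|$ through the homogeneous norm $|\cdot|_B$ and the constants $\sigma_0,\bar\sigma$ of \eqref{confrhomnonhom}, you compute $|D_{1/\sqrt t}v|^2=\sum_{i=0}^n t^{-(2i+1)}|v^{(p_i)}|^2$ directly and exploit only the monotonicity of the exponents on $0<t\leq b_B\leq 1$. This yields the cleaner two-sided bound $t^{-1}|v|^2\leq |D_{1/\sqrt t}v|^2\leq t^{-(2n+1)}|v|^2$ with no $\sigma_0/\bar\sigma$ losses. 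With the missing $\l,\L$ step inserted, this would be a legitimate and somewhat tidier alternative to the paper's proof, producing the same structural constants up to sharper factors.
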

\begin{proof} Fix an arbitrary $v\in\RN$ with $|v|=1$. If $0<t\leq b_B$, then it follows from \eqref{confrhomnonhom} and \eqref{bibi}
$$\min{\left\{\left|D_{\sqrt{t}} v\right|, \left|D_{\sqrt{t}} v\right|^{\frac{1}{2n+1}}\right\}}\leq\frac{1}{\sigma_0}\left|D_{\sqrt{t}} v\right|_B=\frac{\sqrt{t}}{\sigma_0}\left|v\right|_B\leq \sqrt{t}\frac{\bar{\sigma}}{\sigma_0}\leq 1.$$
This says in particular that $\left|D_{\sqrt{t}} v\right|\leq \left|D_{\sqrt{t}} v\right|^{\frac{1}{2n+1}}$, and so
\begin{equation}\label{upv}
\left|D_{\sqrt{t}} v\right|\leq \sqrt{t}\frac{\bar{\sigma}}{\sigma_0}.
\end{equation}
On the other side, we can use \eqref{confrhomnonhom} again to obtain
\begin{equation}\label{dowv}
\left|D_{\sqrt{t}} v\right|^{\frac{1}{2n+1}}=\max{\left\{\left|D_{\sqrt{t}} v\right|, \left|D_{\sqrt{t}} v\right|^{\frac{1}{2n+1}}\right\}}\geq\frac{1}{\bar{\sigma}}\left|D_{\sqrt{t}} v\right|_B=\frac{\sqrt{t}}{\bar{\sigma}}\left|v\right|_B \geq \sqrt{t}\frac{\sigma_0}{\bar{\sigma}}.
\end{equation}
We can now employ the commutation relation \eqref{split} and the hypothesis \eqref{azero} on $A_0$ to uniformly bound from above and below the quadratic form $\la C_0(t)v,v\ra$. Denote by $\L_I$ and $\l_I$ respectively the maximum and the minimum eigenvalue of $C(1)$. Then by \eqref{upv} and \eqref{dowv}, we get
\begin{eqnarray*}
&&\left\langle C_0(t) v,v \right\rangle\leq \L \left\langle C(1) D_{\sqrt{t}} v,D_{\sqrt{t}} v \right\rangle \leq\L
\L_I|D_{\sqrt{t}} v|^2\leq \L\L_I\left(\frac{\bar{\sigma}}{\sigma_0}\right)^2 t=:\L_1 t,\\
&&\left\langle C_0(t) v,v \right\rangle \geq \l \left\langle C(1) D_{\sqrt{t}} v,D_{\sqrt{t}} v \right\rangle
\geq \l\l_I|D_{\sqrt{t}} v|^2\geq \l\l_I\left(\frac{\sigma_0}{\bar{\sigma}}\right)^{4n+2} t^{2n+1}=:\l_1 t^{2n+1},
\end{eqnarray*}
for every $v\in\RN$ with $|v|=1$. In other words, we have just shown that
$$\l_1 t^{2n+1}\mathbb{I_N}\leq C_0(t) \leq \L_1 t \mathbb{I_N} \quad\mbox{ for all }0<t\leq b_B$$
for some structural constants $\l_1,\L_1$. This implies
\begin{equation*}
\frac{1}{\L_1 t}\mathbb{I_N}\leq \Cmu \leq \frac{1}{\l_1 t^{2n+1}} \mathbb{I_N}\quad\mbox{ for all }0<t\leq b_B.
\end{equation*}\end{proof}

\section{Pointwise estimates for Gaussian Kernels}\label{secest}

In this section, we initiate the construction of explicit barriers which will be used to prove the growth lemma. These barriers are modeled after the fundamental solution $\Gamma_0$ \eqref{fondfrozen}. To this end, for $s, \beta > 0$, we consider the function
\begin{equation}\label{defbar}
\Gsb(z ) =\left\{
\begin{tabular}
[c]{lll}
$0$ & $\text{for }t\leq0,$\\\mbox{}\\
$\frac{1}{t^{s\frac{Q}{2}}  }
\exp\left(  -\frac{1}{4\beta}\left\langle C_0^{-1}(  1)D_{\frac{1}{\sqrt{t}}}  x,D_{\frac{1}{\sqrt{t}}} x\right\rangle
\right)  $ & $\text{for }t>0.$
\end{tabular}
\ \right.
\end{equation}
Note that $\Gsb$ is $\delta_r$-homogeneous of degree $-sQ$. We devote the rest of this section to establishing the necessary pointwise estimates for $\Gsb$.

\begin{lemma}\label{upperboundlemma} (Upper Bound for $\Gsb$)
Let $s,\beta$ be positive numbers. There exist $c_1>0$ and $K_1> \sigma_0$ depending just on $s, \beta$, and structural constants such that, for every $r>0$ and $K\geq K_1$, if we consider the cylindrical sets
$$\Q^1_r:=\Q_{K r}^{-b_B r^2, 0},
\qquad S^1_r:=\de B_{Kr}(0)\times [-b_B r^2,0],\qquad \Q^3_r:=\Q_{\sigma_0 r}^{-b_B r^2, -\frac{1}{2}b_B r^2},
$$
then we have
\begin{equation}\label{upperbound}
\Gsb(\zeta^{-1}\circ z)\leq \frac{1}{(b_B r^2)^{s\frac{Q}{2}}  }
\exp\left(\frac{-c_1 K^2}{b_B} \right)\quad\forall\,\zeta=(\xi,\tau)\in \Q^3_r, \ z=(x,t)\in S^1_r.
\end{equation}
\end{lemma}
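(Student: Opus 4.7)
The plan is to reduce by $\delta_r$-homogeneity to the case $r=1$, compute $\zeta^{-1}\circ z$ explicitly via the group law, and then split the Gaussian exponent into two pieces: one to absorb the polynomial singularity $T^{-sQ/2}$ at $T\to 0^+$ and one to retain the desired exponential decay in $K$. Since $\Gsb$ is $\delta_r$-homogeneous of degree $-sQ$ and $\delta_r$ is a group automorphism, one has $\zeta^{-1}\circ z=\delta_r(\zeta'^{-1}\circ z')$ with $\zeta':=\delta_{1/r}(\zeta)\in \Q^3_1$ and $z':=\delta_{1/r}(z)\in S^1_1$, so it suffices to prove the estimate at $r=1$ with right-hand side $b_B^{-sQ/2}\exp(-c_1 K^2/b_B)$. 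A direct computation from the group law gives $\zeta^{-1}\circ z=(y,T)$ where $T:=t-\tau$ and $y:=x-E(T)\xi$. Since $t\in[-b_B,0]$ and $\tau\in[-b_B,-b_B/2]$ at $r=1$, we have $T\in[-b_B/2,b_B]$; the case $T\le 0$ is trivial (as $\Gsb\equiv 0$ there), so I focus on $T\in(0,b_B]$, where \eqref{split} yields
\begin{equation*}
\Gsb(y,T)=T^{-sQ/2}\exp\bigl(-\tfrac{1}{4\beta}\langle C_0^{-1}(T)y,y\rangle\bigr).
\end{equation*}

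The crucial step is a lower bound $|y|\gtrsim K$. By the triangle inequality in Lemma \ref{normsarecomparable}(i), $|y|_B\ge |x|_B-|E(T)\xi|_B = K-|E(T)\xi|_B$. Since $|\xi|_B<\sigma_0$, Lemma \ref{normsarecomparable}(ii) forces $|\xi|<1$; combined with $\|E(T)\|\le 2$ from \eqref{bibi} and a second application of Lemma \ref{normsarecomparable}(ii), this gives $|E(T)\xi|_B\le 2\bar{\sigma}$. Taking $K_1\ge 4\bar{\sigma}$ (which in particular ensures $K_1>\sigma_0$), for $K\ge K_1$ one obtains $|y|_B\ge K/2\ge 2\bar{\sigma}$; a third application of Lemma \ref{normsarecomparable}(ii) then forces $|y|\ge 1$ and $|y|\ge |y|_B/\bar{\sigma}\ge K/(2\bar{\sigma})$. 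Lemma \ref{evalsofC} now yields $\langle C_0^{-1}(T)y,y\rangle\ge |y|^2/(\L_1 T)\ge \kappa_0 K^2/T$ with $\kappa_0:=1/(4\bar{\sigma}^2\L_1)$.

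Setting $\kappa:=\kappa_0/(4\beta)$, the kernel estimate becomes $\Gsb(y,T)\le T^{-sQ/2}\exp(-\kappa K^2/T)$, which I would split as
\begin{equation*}
T^{-sQ/2}\exp(-\kappa K^2/T) = \bigl[T^{-sQ/2}\exp(-\kappa K^2/(2T))\bigr]\cdot\exp(-\kappa K^2/(2T)).
\end{equation*}
The bracketed factor is maximized over $T>0$ at $T^*=\kappa K^2/(sQ)$ and is therefore dominated by a structural constant times $K^{-sQ}$; the remaining factor is bounded by $\exp(-\kappa K^2/(2b_B))$ using $T\le b_B$. Choosing any $c_1\in(0,\kappa/2)$ and enlarging $K_1$ if necessary so that the structural factor $C(s,Q,\beta)K^{-sQ}\exp(-(\kappa/2-c_1)K^2/b_B)$ is bounded by $b_B^{-sQ/2}$ for every $K\ge K_1$, the $r=1$ estimate follows, and the scaling from the first paragraph delivers the general $r$. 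The main obstacle is precisely this competition between factors: $T^{-sQ/2}$ is unbounded as $T\to 0^+$, and the argument succeeds only because the exponent is of order $K^2/T$ (thanks to $|y|\gtrsim K$), producing a decay factor large enough both to dominate $T^{-sQ/2}$ via the elementary maximization $\sup_{T>0}T^{-sQ/2}e^{-\alpha/T}=C(s,Q)\alpha^{-sQ/2}$ and to retain a residual $e^{-c_1 K^2/b_B}$ after using $T\le b_B$.
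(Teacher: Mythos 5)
Your proof is correct and follows essentially the same path as the paper's: reduce (via the dilations) to the unit scale, bound the Gaussian exponent from below by a quantity of order $K^2/T$ using Lemma \ref{evalsofC}, the triangle inequality, the comparison \eqref{confrhomnonhom} and the bound \eqref{bibi}, and then resolve the competition between $T^{-sQ/2}$ and $\exp(-cK^2/T)$ over $T\in(0,b_B]$. The only cosmetic difference is in the last step: the paper fixes $K_1$ large enough that $b_Br^2$ lies on the increasing branch of $y\mapsto y^{-sQ/2}e^{-C/y}$ and evaluates at the endpoint, getting a clean $c_1$, whereas you split the exponential in half, absorb the power of $T$ into one half via $\sup_{T>0}T^{-sQ/2}e^{-\alpha/T}\sim\alpha^{-sQ/2}$, and enlarge $K_1$ to eat the residual prefactor; both yield the same kind of structural constant.
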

\begin{proof} Recall that $\Gsb(\zeta^{-1}\circ z)=0$ if $t\leq \tau$. Therefore, it suffices to assume $z\in S^1_r$ and $\zeta\in \Q^3_r$ with $t>\tau$. In this case, we have
$$\Gsb(\zeta^{-1}\circ z)=\frac{1}{(t-\tau)^{s\frac{Q}{2}}  }
\exp\left(  -\frac{1}{4\beta}\left\langle C_0^{-1}(t-\tau)\left(x-E(t-\tau)\xi\right),\left(x-E(t-\tau)\xi\right)\right\rangle\right).$$
Let us deal with the term inside the exponential. By \eqref{split}, we have
\begin{eqnarray*}
&&\left\langle C_0^{-1}(t-\tau)\left(x-E(t-\tau)\xi\right),\left(x-E(t-\tau)\xi\right)\right\rangle\\
&&=\left\langle C_0^{-1}\left(\frac{t-\tau}{r^2}\right)D_{\frac{1}{r}} \left(x-E(t-\tau)\xi\right),D_{\frac{1}{r}} \left(x-E(t-\tau)\xi\right)\right\rangle \qquad \mbox{ for all } r>0.
\end{eqnarray*}
By definition we have $0<\frac{t-\tau}{r^2}\leq \frac{-\tau}{r^2}\leq b_B$. Therefore,
\begin{eqnarray*}
&&\left\langle C_0^{-1}\left(\frac{t-\tau}{r^2}\right)D_{\frac{1}{r}} \left(x-E(t-\tau)\xi\right),D_{\frac{1}{r}} \left(x-E(t-\tau)\xi\right)\right\rangle \\
&\geq& \frac{r^2}{(t-\tau)\L_1}|D_{\frac{1}{r}} \left(x-E(t-\tau)\xi\right)|^2 \qquad \mbox{ by } \eqref{bici} \\
&=&\frac{r^2}{(t-\tau)\L_1}\left(|D_{\frac{1}{r}} x|^2-2\left\langle D_{\frac{1}{r}} x, D_{\frac{1}{r}} E(t-\tau)\xi \right\rangle + |D_{\frac{1}{r}} E(t-\tau)\xi|^2\right)\\
&\geq& \frac{r^2}{(t-\tau)\L_1}\left(|D_{\frac{1}{r}} x|^2-2\left\langle D_{\frac{1}{r}} x, D_{\frac{1}{r}} E(t-\tau)\xi \right\rangle\right)\\
&=& \frac{r^2}{(t-\tau)\L_1}\left(|D_{\frac{1}{r}} x|^2-2\left\langle D_{\frac{1}{r}} x, E\left(\frac{t-\tau}{r^2}\right)D_{\frac{1}{r}}\xi \right\rangle\right) \qquad \mbox{ by } \eqref{commu} \\
&\geq&\frac{r^2}{(t-\tau)\L_1}\left(|D_{\frac{1}{r}} x|^2-2\left\|E\left(\frac{t-\tau}{r^2}\right)\right\| |D_{\frac{1}{r}} x| |D_{\frac{1}{r}}\xi|\right)\\
&\geq& \frac{r^2}{(t-\tau)\L_1}|D_{\frac{1}{r}} x|\left(|D_{\frac{1}{r}} x|-4 |D_{\frac{1}{r}}\xi|\right) \qquad \mbox{ by } \eqref{bibi}.
\end{eqnarray*}
In summary, we have just proved
\begin{equation}\label{belowexp}
\left\langle C_0^{-1}(t-\tau)\left(x-E(t-\tau)\xi\right),\left(x-E(t-\tau)\xi\right)\right\rangle\geq \frac{r^2}{(t-\tau)\L_1}|D_{\frac{1}{r}} x|\left(|D_{\frac{1}{r}} x|-4 |D_{\frac{1}{r}}\xi|\right).
\end{equation}
We now need a bound from below for $|D_{\frac{1}{r}} x|$ and a bound from above for $|D_{\frac{1}{r}} \xi|$. By \eqref{confrhomnonhom} and the definition of $S^1_r$, we have
$$K_1\leq K=\left|D_{\frac{1}{r}} x\right|_B\leq \bar{\sigma}\max{\left\{\left|D_{\frac{1}{r}}x\right|, \left|D_{\frac{1}{r}}x\right|^{\frac{1}{2n+1}}\right\}}.$$
If we choose $K_1\geq \bar{\sigma}$, this yields 
\begin{equation}\label{drx}
|D_{\frac{1}{r}} x|=\max{\left\{\left|D_{\frac{1}{r}}x\right|, \left|D_{\frac{1}{r}}x\right|^{\frac{1}{2n+1}}\right\}}\geq \frac{K}{\bar{\sigma}}.
\end{equation}
On the other hand, by \eqref{confrhomnonhom} and the definition of $\Q^3_r$, we have
$$\sigma_0\geq \left|D_{\frac{1}{r}} \xi\right|_B\geq \sigma_0\min{\left\{\left|D_{\frac{1}{r}} \xi\right|, \left|D_{\frac{1}{r}} \xi\right|^{\frac{1}{2n+1}}\right\}},$$
which says in particular that
\begin{equation}\label{drxi}
|D_{\frac{1}{r}} \xi|=\min{\left\{\left|D_{\frac{1}{r}} \xi\right|, \left|D_{\frac{1}{r}} \xi\right|^{\frac{1}{2n+1}}\right\}}\leq 1.
\end{equation}
Using \eqref{drx} and \eqref{drxi} in \eqref{belowexp}, and choosing $K_1\geq 8\bar{\sigma}$, we get
$$\left\langle C_0^{-1}(t-\tau)\left(x-E(t-\tau)\xi\right),\left(x-E(t-\tau)\xi\right)\right\rangle\geq \frac{r^2}{t-\tau}\frac{K^2}{2\L_1(\bar{\sigma})^2}.$$
Hence
$$\Gsb(\zeta^{-1}\circ z)\leq\frac{1}{(t-\tau)^{s\frac{Q}{2}}  }
\exp\left( \frac{-r^2}{t-\tau}\frac{K^2}{8\L_1(\bar{\sigma})^2\beta} \right).$$
The function $f(y)=y^{-s\frac{Q}{2}}\exp{\left(\frac{-C}{y}\right)}$ (for $C>0$) is monotone increasing for $y\in(0,\frac{2C}{sQ}]$. If we finally fix
$$K_1=\max\left\{8\bar{\sigma}, 2\bar{\sigma}\sqrt{b_B\L_1\beta s Q}\right\},$$
then $0<t-\tau\leq b_Br^2\leq \frac{r^2K^2}{4\L_1(\bar{\sigma})^2\beta s Q }$ by construction and we get
$$\Gsb(\zeta^{-1}\circ z)\leq\frac{1}{(b_B r^2)^{s\frac{Q}{2}}  }
\exp\left(\frac{-K^2}{8\L_1(\bar{\sigma})^2\beta b_B} \right).$$
\end{proof}

\begin{lemma}\label{lowerboundlemma} (Lower Bound for $\Gsb$)
Let $s,\beta, r$ be positive numbers. Consider the cylinders
$$\Q^2_r:=\Q_{\sigma_0 r}^{-\frac{1}{4}b_B r^2, 0},\qquad 
\Q^3_r:=\Q_{\sigma_0 r}^{-b_B r^2, -\frac{1}{2}b_B r^2}.$$
There exists $c_2>0$ depending on $\beta$ and structural constants such that
\begin{equation}\label{lowerbound}
\Gsb(\zeta^{-1}\circ z)\geq \frac{1}{(b_B r^2)^{s\frac{Q}{2}}  }
\exp\left(\frac{-c_2}{b_B^{2n+1}}\right)\quad\forall\,\zeta=(\xi,\tau)\in \Q^3_r,\,\ z=(x,t)\in \Q^2_r.
\end{equation}
\end{lemma}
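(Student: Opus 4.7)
The plan is to proceed in direct parallel with the proof of Lemma \ref{upperboundlemma}, using the same rescalings but now seeking upper bounds on the quadratic form inside the exponent and on the reciprocal of the time factor. For $\zeta=(\xi,\tau)\in\Q^3_r$ and $z=(x,t)\in\Q^2_r$, I first observe that the time range is controlled in both directions: since $-b_Br^2<\tau<-\tfrac12 b_Br^2$ and $-\tfrac14 b_Br^2<t<0$, one has
$$\tfrac{1}{4}b_Br^2\;\leq\; t-\tau\;\leq\; b_B r^2.$$
In particular $t>\tau$, so $\Gsb(\zeta^{-1}\circ z)$ is given by the positive branch of \eqref{defbar}, and the prefactor is already controlled by $(t-\tau)^{-sQ/2}\geq (b_B r^2)^{-sQ/2}$. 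This handles the power factor in \eqref{lowerbound}.

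The main content is then to give an \emph{upper} bound, independent of $r$, for the quadratic form
$$Q(z,\zeta):=\langle C_0^{-1}(t-\tau)(x-E(t-\tau)\xi),(x-E(t-\tau)\xi)\rangle.$$
Exactly as in the proof of Lemma \ref{upperboundlemma}, the commutation relation \eqref{split} (applied with scale $r$) rewrites
$$Q(z,\zeta)=\left\langle C_0^{-1}\!\left(\tfrac{t-\tau}{r^2}\right)D_{1/r}(x-E(t-\tau)\xi),\,D_{1/r}(x-E(t-\tau)\xi)\right\rangle.$$
Since $0<\tfrac{t-\tau}{r^2}\leq b_B$, the upper bound in Lemma \ref{evalsofC} yields
$$Q(z,\zeta)\;\leq\;\frac{1}{\l_1}\left(\tfrac{t-\tau}{r^2}\right)^{-(2n+1)}\bigl|D_{1/r}(x-E(t-\tau)\xi)\bigr|^2\;\leq\;\frac{4^{2n+1}}{\l_1 b_B^{2n+1}}\bigl|D_{1/r}(x-E(t-\tau)\xi)\bigr|^2,$$
using $t-\tau\geq\tfrac14 b_Br^2$ for the last inequality.

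It remains to show $|D_{1/r}(x-E(t-\tau)\xi)|$ is bounded by an absolute structural constant. Using the commutation identity \eqref{commu} we have $D_{1/r}E(t-\tau)\xi=E(\tfrac{t-\tau}{r^2})D_{1/r}\xi$, so by \eqref{bibi},
$$\bigl|D_{1/r}(x-E(t-\tau)\xi)\bigr|\;\leq\;|D_{1/r}x|+\bigl\|E(\tfrac{t-\tau}{r^2})\bigr\|\cdot|D_{1/r}\xi|\;\leq\;|D_{1/r}x|+2|D_{1/r}\xi|.$$
Since $|x|_B<\sigma_0 r$ and $|\xi|_B<\sigma_0 r$, the $D_r$-homogeneity gives $|D_{1/r}x|_B<\sigma_0$ and $|D_{1/r}\xi|_B<\sigma_0$; as in the derivation of \eqref{drxi}, \eqref{confrhomnonhom} then forces $|D_{1/r}x|\leq 1$ and $|D_{1/r}\xi|\leq 1$. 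Combining these estimates,
$$Q(z,\zeta)\;\leq\;\frac{9\cdot 4^{2n+1}}{\l_1\, b_B^{2n+1}},$$
whence
$$\Gsb(\zeta^{-1}\circ z)\;\geq\;\frac{1}{(b_Br^2)^{sQ/2}}\exp\!\left(-\frac{9\cdot 4^{2n+1}}{4\beta\,\l_1\, b_B^{2n+1}}\right),$$
which is \eqref{lowerbound} with $c_2:=\frac{9\cdot 4^{2n+1}}{4\beta\l_1}$ depending only on $\beta$ and structural data.

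The only subtle step is the passage from $|\cdot|_B$-bounds to Euclidean-norm bounds on the rescaled variables $D_{1/r}x$ and $D_{1/r}\xi$; this is precisely the dichotomy exploited in \eqref{drxi} and is handled cleanly by \eqref{confrhomnonhom}. The rest is a careful but routine application of the tools (\eqref{bici}, \eqref{commu}, \eqref{split}, \eqref{bibi}) already assembled for the upper-bound lemma, with the roles of ``upper'' and ``lower'' swapped.
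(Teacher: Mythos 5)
Your proof is correct and follows essentially the same route as the paper: same time bracket $\tfrac14 b_Br^2\leq t-\tau\leq b_Br^2$, same use of \eqref{split}, \eqref{bici}, \eqref{commu}, \eqref{bibi}, and the same $|D_{1/r}x|,|D_{1/r}\xi|\leq 1$ argument via \eqref{confrhomnonhom}. The only (immaterial) difference is that you bound $|D_{1/r}(x-E(t-\tau)\xi)|^2$ via the Euclidean triangle inequality $(1+2)^2=9$, whereas the paper uses $|a+b|^2\leq 2(|a|^2+|b|^2)$ to get the constant $10$; both yield the same form of $c_2$.
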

\begin{proof}
Fix $z\in \Q^2_r$ and $\zeta\in \Q^3_r$. Then
\begin{equation}\label{tmentau}
b_Br^2\geq t-\tau\geq \frac{1}{4}b_Br^2.
\end{equation}
We argue similarly to the proof of Lemma \ref{upperboundlemma} to estimate the quadratic form from below:
\begin{eqnarray*}
&&\left\langle C_0^{-1}(t-\tau)\left(x-E(t-\tau)\xi\right),\left(x-E(t-\tau)\xi\right)\right\rangle=\\
&=&\left\langle C_0^{-1}\left(\frac{t-\tau}{r^2}\right)D_{\frac{1}{r}} \left(x-E(t-\tau)\xi\right),D_{\frac{1}{r}} \left(x-E(t-\tau)\xi\right)\right\rangle \qquad \mbox{ by } \eqref{split}\\
&\leq&\frac{1}{\l_1}\left(\frac{r^2}{t-\tau}\right)^{2n+1}|D_{\frac{1}{r}} \left(x-E(t-\tau)\xi\right)|^2 \qquad \mbox{ by } \eqref{bici}\\
&\leq& \frac{1}{\l_1}\left(\frac{4}{b_B}\right)^{2n+1}|D_{\frac{1}{r}} \left(x-E(t-\tau)\xi\right)|^2 \qquad \mbox{ by } \eqref{tmentau}\\
&\leq& \frac{2}{\l_1}\left(\frac{4}{b_B}\right)^{2n+1}\left(|D_{\frac{1}{r}}x|^2 + |D_{\frac{1}{r}}E(t-\tau)\xi|^2\right)\\
&=& \frac{2}{\l_1}\left(\frac{4}{b_B}\right)^{2n+1}\left(|D_{\frac{1}{r}}x|^2 + \left|E\left(\frac{t-\tau}{r^2}\right)D_{\frac{1}{r}}\xi\right|^2\right) \qquad \mbox{ by } \eqref{commu}\\
&\leq&\frac{2}{\l_1}\left(\frac{4}{b_B}\right)^{2n+1}\left(|D_{\frac{1}{r}}x|^2 + \left\|E\left(\frac{t-\tau}{r^2}\right)\right\|^2|D_{\frac{1}{r}}\xi|^2\right)\\
&\leq&\frac{2}{\l_1}\left(\frac{4}{b_B}\right)^{2n+1}\left(|D_{\frac{1}{r}}x|^2 + 4|D_{\frac{1}{r}}\xi|^2\right) \qquad \mbox{ by } \eqref{tmentau}\mbox{ and }\eqref{bibi}.
\end{eqnarray*}
Since $x,\xi\in B_{\sigma_0 r}(0)$, we can argue as in \eqref{drxi} to conclude
$$|D_{\frac{1}{r}}x|, \ |D_{\frac{1}{r}}\xi|\leq 1.$$
Hence
$$\left\langle C_0^{-1}(t-\tau)\left(x-E(t-\tau)\xi\right),\left(x-E(t-\tau)\xi\right)\right\rangle\leq\frac{10}{\l_1}\left(\frac{4}{b_B}\right)^{2n+1}.$$
Recalling once more \eqref{tmentau}, we finally obtain
$$\Gsb(\zeta^{-1}\circ z)\geq \frac{1}{(b_B r^2)^{s\frac{Q}{2}}  }
\exp\left(\frac{-5}{2\l_1\beta} \left(\frac{4}{b_B}\right)^{2n+1} \right).$$
\end{proof}

\section{Construction of Barriers}\label{barriers}

Our aim in this section is to construct potentials using the kernels $\Gsb$ \eqref{defbar}. The eventual goal is to use these potentials as barriers for the operators $\elle_A$ under the the hypotheses \hyperref[H1]{H1} and \hyperref[H2]{H2}. The pointwise bounds from Section \ref{secest} will then allow us to successfully use comparison principle arguments in the proof of the growth lemma given in Section \ref{sec5}.

For a fixed Lebesgue-measurable set $E\subset\RNu$, consider the function
\begin{equation}\label{defUE}
U_E(z):=\int_E{\Gsb(\zeta^{-1}\circ z)\,{\rm d}\zeta}, \qquad z\in\RNu.
\end{equation}
In order for $U_E$ to be well-defined, we must impose a bound on the parameter $s$, as shown in the following lemma.
\begin{lemma}\label{lemdd}
Fix $s,\beta>0$ and assume $s<1+\dfrac{2}{Q}$. Then, for any $E \subseteq \RN\times [T_1,T_2], \ T_1<T_2$, there exists a constant $C = C(T_1,T_2,s,\beta) > 0$ such that
\begin{equation}\label{boundint}
U_E(z) \leq C\qquad\mbox{ for all }z\in\RNu.
\end{equation}
Moreover, for all Lebesgue-measurable sets $E\subset\RNu$ and for all $r>0$, we have
\begin{equation}\label{maxr}
\sup_{\RNu}{U_{\left(\delta_r E\right)}}=r^{Q+2-sQ}\sup_{\RNu}{U_{E}}.
\end{equation}
\end{lemma}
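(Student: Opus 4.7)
The plan is to prove both statements via a change of variables that exploits two key facts: that $B$ is strictly block upper-triangular (so $\trace(B^T)=0$ and $\det E(\sigma)=1$), and that $\Gsb$ is $\delta_r$-homogeneous of degree $-sQ$.

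For the bound \eqref{boundint}, I would fix $z=(x,t)\in\RNu$ and use that $\Gsb(\zeta^{-1}\circ z)$ vanishes for $\tau\geq t$, so
$$\varphi(z)=\int_{T_1}^{\min(t,T_2)}\!\int_{\RN} \Gsb(\zeta^{-1}\circ z)\,d\xi\,d\tau.$$
For fixed $\tau<t$, I perform the affine change of variable $\eta=D_{1/\sqrt{t-\tau}}(x-E(t-\tau)\xi)$. The Jacobian is
$$\left|\frac{d\eta}{d\xi}\right|=|\deter D_{1/\sqrt{t-\tau}}|\cdot|\deter E(t-\tau)|=(t-\tau)^{-Q/2}\cdot 1,$$
where the first factor follows from the definition of $D_r$ and the identity $p_0+3p_1+\cdots+(2n+1)p_n=Q$, and the second uses $\deter E(\sigma)=e^{-\sigma\,\trace(B^T)}=1$ by \eqref{B}. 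The inner integral then reduces to $(t-\tau)^{Q/2-sQ/2}$ times the Gaussian integral $c_\beta:=\int_{\RN}\exp(-\frac{1}{4\beta}\langle C_0^{-1}(1)\eta,\eta\rangle)\,d\eta$, which is finite (and independent of $z$) since $C_0^{-1}(1)>0$. Hence
$$\varphi(z)=c_\beta\int_{T_1}^{\min(t,T_2)}(t-\tau)^{-\alpha}\,d\tau,\qquad \alpha:=\tfrac{(s-1)Q}{2}.$$
The hypothesis $s<1+\tfrac{2}{Q}$ is precisely $\alpha<1$. Substituting $u=t-\tau$ gives an upper bound of $\int_0^{T_2-T_1}u^{-\alpha}\,du=(T_2-T_1)^{1-\alpha}/(1-\alpha)$ when $0<\alpha<1$, and of $T_2-T_1$ when $\alpha\leq 0$. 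In either case the bound is independent of $z$, which yields the desired constant $C=C(T_1,T_2,s,\beta)$.

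For the scaling identity \eqref{maxr}, I would perform the change of variable $\zeta=\delta_r\eta$ inside \eqref{defUE}; the Jacobian contribution is $r^{Q+2}$ by \eqref{invmeasure}. Writing an arbitrary point $z$ as $z=\delta_r w$ and using that $\delta_r$ is a group automorphism of $(\RNu,\circ)$, one has $(\delta_r\eta)^{-1}\circ\delta_r w=\delta_r(\eta^{-1}\circ w)$. Combining this with the $\delta_r$-homogeneity of $\Gsb$ of degree $-sQ$ gives
$$U_{\delta_r E}(\delta_r w)=r^{Q+2}\int_E \Gsb\bigl(\delta_r(\eta^{-1}\circ w)\bigr)\,d\eta=r^{Q+2-sQ}\,U_E(w).$$
Since $\delta_r$ is a bijection of $\RNu$, taking suprema on both sides proves \eqref{maxr}.

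The only nontrivial step is the Jacobian computation in the first part, where the strictly upper-triangular form \eqref{B} of $B$ is crucially responsible for $\deter E(\sigma)=1$; without this, a spurious exponential factor in $\tau$ would appear and spoil the uniform-in-$z$ bound. Everything else is a routine Gaussian integration followed by checking when the resulting one-dimensional integral in $\tau$ converges, which is exactly the Cordes-type threshold $s<1+2/Q$.
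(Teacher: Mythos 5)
Your proof is correct and follows essentially the same route as the paper. The only cosmetic difference is in the first part: you combine the reduction into a single affine change of variables $\eta=D_{1/\sqrt{t-\tau}}(x-E(t-\tau)\xi)$ and explicitly invoke $\deter E(\sigma)=e^{-\sigma\,\trace B^T}=1$, whereas the paper first translates $\xi'=\xi-E(\tau-t)x$ (whose Jacobian is trivially $1$, so the determinant of $E$ never needs to be mentioned), then uses the commutation relation \eqref{commu} to move $D_{1/\sqrt{t-\tau}}$ past $E(t-\tau)$ and rescales; both reach the same separated Gaussian-times-time integral with the exponent $(1-s)Q/2$ governing convergence, and the proof of \eqref{maxr} via $\delta_r$-homogeneity and the automorphism property is identical to the paper's.
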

\begin{proof}
To prove \eqref{boundint} it suffices to show that
$$U(z):=\int_{\RN\times [T_1,T_2]}{\Gsb(\zeta^{-1}\circ z)\,{\rm d}\zeta}\leq C \qquad\mbox{for all } z\in\RNu.$$
Fix then $z=(x,t)\in\RNu$. Suppose $t> T_1$, since the other possibility is trivial. Note that $U(z)$ can be written as
$$\int_{\RN\times [T_1,\min{\{t,T_2\}})}{\frac{1}{(t-\tau)^{s\frac{Q}{2}}  }
\exp\left(  -\frac{1}{4\beta}\left\langle C_0^{-1}(  1)D_{\frac{1}{\sqrt{t-\tau}}} \left(x-E(t-\tau)\xi\right),D_{\frac{1}{\sqrt{t-\tau}}} \left(x-E(t-\tau)\xi\right)\right\rangle
\right)\,{\rm d}\xi{\rm d}\tau}.$$
By performing the change of variables $\xi'=\xi-E(\tau-t)x$ and using the commutation property \eqref{commu}, we get
\begin{eqnarray*}
&&U(z) = \int_{\RN\times [T_1,\min{\{t,T_2\}})}{\frac{1}{(t-\tau)^{s\frac{Q}{2}}  }
\exp\left(  -\frac{1}{4\beta}\left\langle C_0^{-1}(  1)D_{\frac{1}{\sqrt{t-\tau}}} E(t-\tau)\xi',D_{\frac{1}{\sqrt{t-\tau}}} E(t-\tau)\xi'\right\rangle
\right)\,{\rm d}\xi'{\rm d}\tau}\\
&=& \int_{\RN\times [T_1,\min{\{t,T_2\}})}{\frac{1}{(t-\tau)^{s\frac{Q}{2}}  }
\exp\left(  -\frac{1}{4\beta}\left\langle E^T(1)C_0^{-1}(  1)E(1)D_{\frac{1}{\sqrt{t-\tau}}} \xi',D_{\frac{1}{\sqrt{t-\tau}}} \xi'\right\rangle
\right)\,{\rm d}\xi'{\rm d}\tau}.
\end{eqnarray*}
We can now change $\xi=D_{\frac{1}{\sqrt{t-\tau}}} \xi'$ and get that the last integral is equal to
$$\left(\int_{\RN}{\exp\left(  -\frac{1}{4\beta}\left\langle E^T(1)C_0^{-1}(  1)E(1)\xi,\xi\right\rangle
\right)\,{\rm d}\xi}\right)\left(\int_{[T_1,\min{\{t,T_2\}})}{\frac{(t-\tau)^{\frac{Q}{2}}}{(t-\tau)^{s\frac{Q}{2}}}\,{\rm d}}\tau\right).$$
The second integral is finite if $s<1+\frac{2}{Q}$. The first integral can be easily bounded, and it can even be computed explicitly. Indeed, it follows from \eqref{fondfrozen} that $\int_{\RN}\Gamma_0(\zeta^{-1}\circ z)\,{\rm d}\xi=1$ for every $z$ with $t>\tau$. By choosing $x=0$ and $t=\tau+1$, we infer that $\int_{\RN}c_0\exp\left(  -\frac{1}{4}\left\langle E^T(1)C_0^{-1}(  1)E(1)\xi,\xi\right\rangle
\right)\,{\rm d}\xi=1$. This implies, by \eqref{azero},
$$\int_{\RN}{\exp\left(  -\frac{1}{4\beta}\left\langle E^T(1)C_0^{-1}(  1)E(1)\xi,\xi\right\rangle
\right)\,{\rm d}\xi}=\frac{1}{c_0\beta^{\frac{N}{2}}}\leq \left(\frac{4\pi\L}{\beta}\right)^{\frac{N}{2}}\sqrt{\deter(C(1))}.$$
This proves \eqref{boundint}. The proof of \eqref{maxr} follows by homogeneity and the properties of the group automorphisms $\delta_r$. In fact, for $z\in\RNu$ and $r>0$, we have
$$U_{\left(\delta_r E\right)}(\delta_r z)=r^{Q+2}\int_{E}{\Gsb((\delta_r \zeta)^{-1}\circ (\delta_r z))\,{\rm d}\zeta}=r^{Q+2-sQ}\int_{E}{\Gsb(\zeta^{-1}\circ z)\,{\rm d}\zeta}=r^{Q+2-sQ} U_E(z).$$
\end{proof}

In the remainder of this section, we will determine conditions on the parameters $s, \beta$ that are necessary for $U_E$ to be a subsolution for the class of variable coefficient operators \eqref{varintroL}. To this end, we compute $\elle_A \Gsb$. Recalling the definition of $\Gsb$ \eqref{defbar}, for fixed $s,\beta>0$ and for all $\O\ni z\neq 0$, we can compute
\begin{eqnarray}
\nabla\Gsb(z)&=&-\frac{1}{2\beta}\Gsb(z)\Cmu x\label{nabG}\\
D^2\Gsb(z)&=&\frac{1}{2\beta}\Gsb(z)\left(-\Cmu +\frac{1}{2\beta}\left(\Cmu x\right)\otimes\left(\Cmu x\right)\right)\label{HG}.
\end{eqnarray}
To compute the $t$-derivative, we use the following identities for invertible matrices $M(t)$:
\begin{itemize}
\item[$\cdot$] $(M^{-1}(t))'=-M^{-1}(t)M'(t)M^{-1}(t)$;
\item[$\cdot$] $(\deter(M(t)))'=\trace(M'(t)M^{-1}(t))\,\deter(M(t))$.
\end{itemize}
This yields
\begin{eqnarray*}
\de_t\Gsb(z)&=&\Gsb(z)\left(-\frac{s}{2}\frac{(\deter(C_0(t)))'}{\deter(C_0(t))}+\frac{1}{4\beta}\left\langle C_0'(t)\Cmu x,\Cmu x\right\rangle \right)=\\
&=&\Gsb(z)\left(-\frac{s}{2}\trace\left(C_0'(t)\Cmu\right) +\frac{1}{4\beta}\left\langle C_0'(t)\Cmu x,\Cmu x\right\rangle\right).
\end{eqnarray*}
We have by definition $C_0'(t)=E(t)A_0E^T(t)$. On the other hand, the following identity also holds
$$E(t)A_0E^T(t) = A_0-B^TC_0(t)-C_0(t)B\qquad\forall\,t>0.$$
To see this, note that the r.h.s. and the l.h.s. agree at $t=0$ and they have the same derivative. Consequently,
\begin{equation}\label{odecon}
C_0'(t)=A_0-B^TC_0(t)-C_0(t)B\qquad\forall\,t>0.
\end{equation}
Multiplying by $\Cmu$ and taking the trace (recall $\trace(B)=0$), we get
\begin{equation}\label{odecons}
\trace\left(C_0'(t)\Cmu\right) =\trace\left(A_0\Cmu\right).
\end{equation}
It follows from \eqref{odecon} and \eqref{odecons} that
\begin{equation}\label{tderG}
\de_t\Gsb(z)=\frac{1}{2\beta}\Gsb(z)\left(-s\beta \,\trace\left(A_0\Cmu\right)+\frac{1}{2}\left\langle A_0\Cmu x,\Cmu x\right\rangle - \left\langle B\Cmu x, x\right\rangle \right).
\end{equation}
Recalling the definition of $\elle_A$ and using \eqref{nabG}, \eqref{HG} and \eqref{tderG}, we conclude
\begin{eqnarray}\label{primadihip}
\elle_A\Gsb(z)=\frac{1}{2\beta}\Gsb(z)&&\hspace{-0.6cm}\left(s\beta \,\trace\left(A_0\Cmu\right) -\trace\left(A(z)\Cmu\right)+\vphantom{\frac{1}{2}}\right.\\
&&\left. +\frac{1}{2}\left(\frac{1}{\beta}\left\langle A(z)\Cmu x,\Cmu x\right\rangle-\left\langle A_0\Cmu x,\Cmu x\right\rangle\right)\right).\nonumber
\end{eqnarray}

Using \eqref{primadihip}, we now show that the parameters $s, \beta$ can be chosen appropriately under each of the hypotheses (H1) and (H2) so that $U_E$ is a subsolution for $\elle_A$ outside $E$.

\subsection{Cordes-Landis Condition \hyperref[H1]{H1}}\label{cordeslandiscondition}

Consider the kernel $\Gsb$ with the choice
$$A_0=I_0,$$
where $I_0$ is defined in \eqref{matrixexponential}. Note that with this choice of $A_0$ we have $C_0 = C$, where $C$ is defined in \eqref{Kalma0}. Fix also 
\begin{equation}\label{defh1bs}
\beta=\l\qquad\mbox{ and }\qquad s=\frac{\L}{\l}.
\end{equation}
Having fixed these quantities, we can prove the following
\begin{lemma}\label{lemuu}
Suppose $E \subset \RNu$ is bounded and \eqref{defh1bs} holds. Then the function $U_E$ is continuous in $\RNu$, $C^2$ outside $\overline{E}$, and for all $A$ satisfying \hyperref[H1]{H1}, we have
$$\elle_A U_E(z)\geq 0 \quad\mbox{ for }z\in\O\smallsetminus \overline{E}.$$
\end{lemma}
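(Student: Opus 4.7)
The plan is to treat the three claims in order. The first two---continuity of $U_E$ on $\RNu$ and $C^2$ regularity off $\overline{E}$---are soft consequences of Lemma~\ref{lemdd} and differentiation under the integral sign. Under \hyperref[H1]{H1} the exponent $s=\L/\l$ satisfies $s<1+2/Q$, so Lemma~\ref{lemdd} supplies a uniform $L^1$-bound on $\zeta\mapsto\Gsb(\zeta^{-1}\circ z)$; since $\Gsb$ is continuous off the null slice $\{\tau=0\}$, dominated convergence delivers continuity of $U_E$ on $\RNu$. For $z_0\in\O\setminus\overline{E}$ and a neighborhood $V\ni z_0$ bounded away from $\overline{E}$, the set $\{\zeta^{-1}\circ z\,:\,\zeta\in E,\,z\in V\}$ lies in a compact subset of $\RNu\setminus\{0\}$, on which $\Gsb$ and its first- and second-order derivatives are uniformly bounded. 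This justifies differentiation under the integral, so $U_E\in C^2(V)$ and
\[
\elle_A U_E(z) = \int_E \elle_A\bigl[\Gsb(\zeta^{-1}\circ\,\cdot\,)\bigr](z)\, d\zeta.
\]

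The core of the argument is the pointwise sign of the integrand when $z\neq\zeta$. Set $w=(y,\tau'):=\zeta^{-1}\circ z$. Since the vector fields $\de_{x_1},\dots,\de_{x_{p_0}}$ and $\la x,B\nabla\ra-\de_t$ are left-translation invariant, so are their compositions, and hence
\[
\elle_A\bigl[\Gsb(\zeta^{-1}\circ\,\cdot\,)\bigr](z) = \trace\bigl(A(z) D^2_w\Gsb(w)\bigr) + \la y,B\nabla_y\Gsb(w)\ra - \de_{\tau'}\Gsb(w).
\]
This is precisely the quantity computed in \eqref{primadihip}, applied at the point $w$ with the (constant-in-$w$) coefficient matrix $A(z)$. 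Writing $M:=C_0^{-1}(\tau')$,
\[
\elle_A\bigl[\Gsb(\zeta^{-1}\circ\,\cdot\,)\bigr](z) = \frac{\Gsb(w)}{2\beta}\left(s\beta\,\trace(A_0M) - \trace(A(z)M) + \tfrac{1}{2}\Bigl(\tfrac{1}{\beta}\la A(z)My,My\ra - \la A_0 My,My\ra\Bigr)\right).
\]

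Plugging in $A_0=I_0$, $\beta=\l$, and $s=\L/\l$ from \eqref{defh1bs}, the trace bracket simplifies to $\trace\bigl((\L I_0-A(z))M\bigr)$ and the quadratic bracket to $\la(\tfrac{1}{\l}A(z)-I_0)My,My\ra$. By \eqref{ellipticityofA} both $\L I_0-A(z)$ and $\tfrac{1}{\l}A(z)-I_0$ are symmetric positive semidefinite; since $M$ is symmetric positive definite, the identity $\trace(SM)=\trace(M^{1/2}SM^{1/2})$ shows that the trace term is $\geq 0$, and the quadratic term is manifestly $\geq 0$. Together with $\Gsb(w)\geq 0$, this yields $\elle_A[\Gsb(\zeta^{-1}\circ\,\cdot\,)](z)\geq 0$, and integration over $E$ concludes the proof.

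The only real obstacle is conceptual: one must observe that left-invariance of the subelliptic vector fields allows verbatim re-use of the constant-coefficient identity \eqref{primadihip} at the shifted point $w$ with the frozen coefficient $A(z)$, even though $A$ is variable. Once this reduction is in place, the Cordes-Landis threshold $\L/\l<1+2/Q$ is seen to be exactly the window in which $s=\L/\l$ simultaneously meets the two subsolution sign requirements and the integrability cutoff $s<1+2/Q$ of Lemma~\ref{lemdd}.
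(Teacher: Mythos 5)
Your proof is correct and follows essentially the same route as the paper's: invoke Lemma~\ref{lemdd} for the $L^1$-bound and dominated convergence to get continuity and $C^2$ regularity off $\overline{E}$, then plug $A_0=I_0$, $\beta=\l$, $s=\L/\l$ into the identity \eqref{primadihip} and check that the Cordes--Landis window makes both the trace term and the quadratic term nonnegative. One small merit of your write-up is that you spell out explicitly how left-invariance reduces $\elle_A\bigl[\Gsb(\zeta^{-1}\circ\cdot)\bigr](z)$ to the frozen-coefficient expression \eqref{primadihip} evaluated at $w=\zeta^{-1}\circ z$ with matrix $A(z)$; the paper states this reduction more tersely in its closing sentence, but the mathematical content is identical.
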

\begin{proof}
The choice of $s$ and the hypothesis \hyperref[H1]{H1} allow us to invoke Lemma \ref{lemdd} and conclude that $U_E<+\infty$. Moreover, with such choices, $U_E$ is continuous by the dominated convergence theorem and smooth away from the poles in $E$. 
Let us now prove the $\elle_A$-subsolution property. By \eqref{defh1bs}, and using \eqref{ellipticityofA} in \eqref{primadihip}, we have
\begin{eqnarray*}
\elle_A\Gsb(z)=\frac{1}{2\beta}\Gsb(z)&&\hspace{-0.6cm}\left(s\beta \,\trace\left(I_0\CmuI\right) -\trace\left(A(z)\CmuI\right)+\vphantom{\frac{1}{2}}\right.\\
&&\left. +\frac{1}{2}\left(\frac{1}{\beta}\left\langle A(z)\CmuI x,\CmuI x\right\rangle-\left\langle I_0\CmuI x,\CmuI x\right\rangle\right)\right) \\
\geq\frac{1}{2\beta}\Gsb(z)&&\hspace{-0.6cm}\left((s\beta-\L) \,\trace\left(I_0\CmuI\right) +\frac{1}{2}\left(\frac{\l}{\beta}-1\right)\left\langle I_0\CmuI x,\CmuI x\right\rangle\right)=0.
\end{eqnarray*}
To complete the proof, we have only to remember that the vector fields $\de_{x_1}, \ldots, \de_{x_N}$ and $\left\langle x, B\nabla\right\rangle - \partial_t$ are left-invariant with respect to the group law $\circ$. Thus, the function $\Gsb(\zeta^{-1}\circ \cdot)$ is $\elle_A$-subharmonic for any $\zeta \in \mathbb{R}^{N+1}$, and the lemma follows.
\end{proof}

\subsection{Uniform Continuity Assumption \hyperref[H2]{H2}}\label{continuityassumption}

We make precise here the notion of uniform continuity of the coefficients as stipulated in condition \hyperref[H2]{H2}.

\begin{definition}\label{modulusofcontinuity}
Let $\omega : [0, 1) \rightarrow [0,1)$ be a non-decreasing function satisfying $\lim\limits_{s \rightarrow 0^+} \omega(s) = \omega(0) = 0$. We say the matrix $A(\cdot)$ admits a uniform modulus of continuity $\omega$ if
$$\sup\left\{\left\|\mathbb{A}(z)-\mathbb{A}(z_0)\right\|\,:\,z \in\O\cap  \Q_\e^{-\e^2,\e^2}(z_0) \right\} \leq \omega(\epsilon)\quad\mbox{ for all }0<\epsilon <1\,\, \mbox{ and for all }z_0 \in \Omega.$$
\end{definition}

Assume now that $A(\cdot)$ admits a uniform modulus of continuity $\omega$. Fix any $z_0\in\O$ and choose
$$A_0=A(z_0).$$
Define
$$C_{z_0}(t):=C_0(t)=\int_{0}^{t} E(\sigma)A(z_0)E^T(\sigma)\,d\sigma.$$
Let
\begin{equation}\label{pick}
s=1+s_0,\quad\beta=\frac{1}{1+\frac{s_0}{2}}=\frac{2}{2+s_0}\qquad\mbox{for some }s_0>0 \text{ to be determined}.
\end{equation}
Let $\Gsb$ be the kernel corresponding to the above choices. We want the associated potential $U_E$ to be a subsolution in a neighborhood of $z_0$ for $0<s_0<\frac{2}{Q}$. To do this, we exploit the continuity of $A(\cdot)$.
\begin{lemma}\label{barriersforH2} Suppose $E \subset \RNu$ is bounded and \eqref{pick} holds. Then for every $0<s_0<\frac{2}{Q}$ there exists $0<\e_0<1$ depending on $s_0$ and $\omega(\cdot)$ such that
$$\elle_A U_E(z) \geq 0 \qquad\mbox{ for all } z \in \O \cap \Q_{\e_0}^{-\e_0^2,\e_0^2}(z_0)\smallsetminus\overline{E}.$$
\end{lemma}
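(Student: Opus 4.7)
My plan is to start from the pointwise formula \eqref{primadihip} applied to $\Gsb$ with $A_0=A(z_0)$. By the left-translation invariance of the vector fields $\de_{x_1},\ldots,\de_{x_{p_0}}$ and $\la x,B\nabla\ra-\de_t$, the operator $\elle_A$ applied to the translated kernel $z\mapsto\Gsb(\zeta^{-1}\circ z)$ at $z=(x,t)$ coincides with the constant-coefficient operator (with matrix frozen at $A(z)$) applied to $\Gsb$ at the point $w=\zeta^{-1}\circ z=(y,t-\tau)$, where $y=x-E(t-\tau)\xi$. Combined with a regularity/differentiation-under-the-integral argument parallel to that of Lemma \ref{lemuu} (which is legitimate since $s<1+2/Q$ makes Lemma \ref{lemdd} applicable), it then suffices to show that the bracket $F(z,\zeta)$ appearing in \eqref{primadihip}, evaluated at $w$ with frozen $A_0=A(z_0)$, is nonnegative for all $\zeta\in E$ and all $z\in\O\cap\Q_{\e_0}^{-\e_0^2,\e_0^2}(z_0)\smallsetminus\overline{E}$.

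Setting $C:=C_{z_0}^{-1}(t-\tau)$ and adding/subtracting the terms involving $A_0$, I decompose $F(z,\zeta)=G+R$ with
\begin{align*}
G &= (s\beta-1)\trace(A_0 C)+\tfrac{1}{2}\bigl(\tfrac{1}{\beta}-1\bigr)\la A_0 C y,\,C y\ra,\\
R &= \trace\bigl((A_0-A(z))C\bigr)+\tfrac{1}{2\beta}\la(A(z)-A_0)Cy,\,Cy\ra.
\end{align*}
The choices \eqref{pick} give $s\beta-1=s_0/(2+s_0)>0$ and $1/\beta-1=s_0/2>0$, so $G$ is a positive main part. Because $A_0$ and $A(z)-A_0$ share the block structure \eqref{A}, both $G$ and $R$ depend on $C$ only through its top-left $p_0\times p_0$ principal block $M_{11}$, which is itself positive definite as a principal submatrix of the positive definite matrix $C$.

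To dominate $R$ by $G$, I combine the uniform ellipticity $\mathbb{A}(z_0)\geq\l\mathbb{I}_{p_0}$ with the standard estimates $|\trace(MN)|\leq\|M\|\trace(N)$ (valid for $N\geq 0$) and $|\la Mv,v\ra|\leq\|M\||v|^2$. Since hypothesis \hyperref[H2]{H2} gives $\|\mathbb{A}(z)-\mathbb{A}(z_0)\|\leq\omega(\e_0)$ for every $z\in\Q_{\e_0}^{-\e_0^2,\e_0^2}(z_0)$, this yields
\[
|R(z,\zeta)|\;\leq\;\frac{\omega(\e_0)}{\l}\Bigl(\trace(A_0 C)+\tfrac{1}{2\beta}\la A_0 C y,C y\ra\Bigr).
\]
Comparing coefficient by coefficient with $G$, one obtains $F\geq 0$ as soon as $\omega(\e_0)/\l$ is strictly smaller than both $s_0/(2+s_0)$ and $s_0\beta/2$, which can be arranged by taking $\e_0>0$ sufficiently small because $\omega(\e_0)\to 0^+$. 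The main obstacle is the degenerate-block bookkeeping: one must check that the structure \eqref{A} does not spoil the dimension reduction to the block $M_{11}$, which works here because the same positive definite principal block governs both the main and error parts, uniformly in the $\zeta$-dependent evaluation point $(y,t-\tau)$; the rest of the argument is then a clean perturbation driven by \eqref{ellipticity} and the modulus of continuity $\omega$.
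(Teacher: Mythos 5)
Your proof is correct and follows the same basic strategy as the paper (start from \eqref{primadihip} with $A_0=A(z_0)$, use the translation invariance to reduce to a pointwise sign condition, choose $\e_0$ so that $\omega(\e_0)\leq \frac{s_0}{2+s_0}\l$), but the algebraic bookkeeping is packaged a bit differently. The paper groups the bracket directly as $\trace(M_2(z)\,C_{z_0}^{-1})+\tfrac12\la M_1(z)C_{z_0}^{-1}y,C_{z_0}^{-1}y\ra$ with $M_1(z)=\tfrac1\beta A(z)-A(z_0)$ and $M_2(z)=s\beta A(z_0)-A(z)$, and shows $M_1,M_2\geq 0$ as matrices from \eqref{ellipticityofA} and the continuity estimate, so each summand is manifestly nonnegative with no separate comparison step. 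Your decomposition into a main part $G$ plus remainder $R$, with $|R|\leq \tfrac{\omega(\e_0)}{\l}\bigl(\trace(A_0 C)+\tfrac1{2\beta}\la A_0 C y,Cy\ra\bigr)$ dominated coefficientwise by $G$, reaches exactly the same threshold (note $s_0/(2+s_0)=s_0\beta/2$, so your two stated conditions coincide). The paper's phrasing is slightly cleaner because it avoids the trace/quadratic-form operator-norm estimates and exhibits positivity term by term; your version makes more transparent the ``positive main part $+$ small perturbation'' mechanism that motivates the choice \eqref{pick} of $s,\beta$. One small imprecision worth tidying: the quadratic form $\la A_0 Cy,Cy\ra$ depends on the $p_0$-component of $Cy$ and hence on the whole first block row of $C=C_{z_0}^{-1}(t-\tau)$, not only on the principal $p_0\times p_0$ block; your estimates still go through because the block structure of $A_0$ and $A(z)-A_0$ is what matters, but the sentence about depending on $C$ ``only through its top-left block'' should be corrected accordingly.
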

\begin{proof}
It suffices to show that for all $\zeta\in E$, we have
$$\left(\elle_A \Gsb(\zeta^{-1}\circ\cdot)\right)(z)\geq 0\qquad\mbox{ for all }z\in\O\cap  \Q_{\e_0}^{-\e_0^2,\e_0^2}(z_0)\smallsetminus\{\zeta\}.$$
Fix $0<s_0<\frac{2}{Q}$. By Definition \ref{modulusofcontinuity}, there exists $\e_0>0$ such that
$$\left\|\mathbb{A}(z)-\mathbb{A}(z_0)\right\|\leq\omega(\e_0)\leq \frac{s_0}{2+s_0}\l\qquad\mbox{ for all }z\in\O\cap \Q_{\e_0}^{-\e_0^2,\e_0^2}(z_0).$$
In particular, this implies
\begin{equation}\label{conscont}
-\frac{s_0}{2}\l I_0 \leq A(z)-A(z_0)\leq \frac{s_0}{2+s_0}\l I_0\qquad\mbox{ for all }z\in \O\cap \Q_{\e_0}^{-\e_0^2,\e_0^2}(z_0).
\end{equation}
Let us now fix $\zeta\in\RNu$. Arguing as in \eqref{primadihip}, we have for any $z\neq\zeta$
\begin{eqnarray*}
\left(\elle_A \Gsb(\zeta^{-1}\circ\cdot)\right)(z)&=& \frac{1}{2\beta}\Gsb(\zeta^{-1}\circ z)\left(s\beta \,\trace\left(A(z_0)C^{-1}_{z_0}(t-\tau)\right) -\trace\left(A(z)C^{-1}_{z_0}(t-\tau)\right)+\vphantom{\frac{1}{2}}\right.\\
&&+\frac{1}{2}\left(\frac{1}{\beta}\left\langle A(z)C^{-1}_{z_0}(t-\tau) (x-E(t-\tau)\xi),C^{-1}_{z_0}(t-\tau) (x-E(t-\tau)\xi)\right\rangle+\right.\\
&&-\left.\left.\vphantom{\frac{1}{2}}\left\langle A(z_0)C^{-1}_{z_0}(t-\tau) (x-E(t-\tau)\xi),C^{-1}_{z_0}(t-\tau) (x-E(t-\tau)\xi)\right\rangle\right)\right).
\end{eqnarray*}
Let us bound from below separately the trace-terms and the quadratic-terms. Consider any $z\in \O\cap \Q_{\e_0}^{-\e_0^2,\e_0^2}(z_0)$. With our choice of $\beta$ in \eqref{pick} we have $\frac{1}{\beta}=1+\frac{s_0}{2}$. Thus, using \eqref{ellipticityofA} and \eqref{conscont}, we get
$$M_1(z):=\frac{1}{\beta}A(z)-A(z_0)=A(z)-A(z_0)+\frac{s_0}{2}A(z)\geq A(z)-A(z_0)+\frac{s_0}{2}\l I_0\geq 0$$
which implies
$$\left\langle M_1(z) C^{-1}_{z_0}(t-\tau) (x-E(t-\tau)\xi),C^{-1}_{z_0}(t-\tau) (x-E(t-\tau)\xi)\right\rangle\geq 0.$$
On the other hand, by \eqref{pick} we have $s\beta=1+\frac{s_0}{2+s_0}$. Again, by \eqref{ellipticityofA} and \eqref{conscont}, we get
$$M_2(z):=s\beta A(z_0)-A(z)=A(z_0)-A(z)+\frac{s_0}{2+s_0} A(z_0)\geq A(z_0)-A(z)+\frac{s_0}{2+s_0}\l I_0\geq 0$$
which implies
$$\trace\left(M_2(z) C^{-1}_{z_0}(t-\tau)\right)\geq 0.$$
Hence
\begin{eqnarray*}
&&\left(\elle_A \Gsb(\zeta^{-1}\circ\cdot)\right)(z)= \frac{1}{2\beta}\Gsb(\zeta^{-1}\circ z)\left(\trace\left(M_2(z)C^{-1}_{z_0}(t-\tau)\right) +\vphantom{\frac{1}{2}}\right.\\
&&\left.+\frac{1}{2}\left\langle M_1(z)C^{-1}_{z_0}(t-\tau) (x-E(t-\tau)\xi),C^{-1}_{z_0}(t-\tau) (x-E(t-\tau)\xi)\right\rangle\right)\geq 0
\end{eqnarray*}
for every $z\in \O\cap \Q_{\e_0}^{-\e_0^2,\e_0^2}(z_0)\smallsetminus\{\zeta\}$.
\end{proof}

\begin{remark}
In lieu of the hypothesis \hyperref[H1]{H1}, we could have assumed the existence of a fixed matrix $A_0$ of the form \eqref{azero} such that
\begin{equation}\label{generalcordes}
    \tilde{\l} A_0\leq A(z) \leq \tilde{\L} A_0 \quad\mbox{ with }\quad \frac{\tilde{\L}}{\tilde{\l}}<1+\frac{2}{Q}.
\end{equation}
The proof of Lemma \ref{lemuu} can then be carried out in exactly the same way. This is in contrast with operators in groups of Heisenberg type considered in \cite{AGT} and \cite{Tralli-Critical-Density}, where it is not clear how to establish the analogue of Lemma \ref{lemuu} under the more general condition \eqref{generalcordes} without making additional structural assumptions on the coefficient matrix. A similar obstruction arises when attempting to prove the analogue of Lemma \ref{barriersforH2} (see \cite[Section 3]{AGT}).
\end{remark}

\section{Growth Lemma and Applications}\label{sec5}

In this section, we establish the Landis growth lemma for the operators $\mathcal{L}_A$ under the hypotheses \hyperref[H1]{H1} and \hyperref[H2]{H2}. It is well known that such growth lemmas are the starting point for proving oscillation decay, H\"older continuity and Harnack's inequality for solutions.

Let us recall the definitions of the cylinder-like sets considered in Lemma \ref{upperboundlemma} and \ref{lowerboundlemma}
$$\Q^1_r(z_0)=\Q_{K r}^{-b_B r^2, 0}(z_0) \qquad \Q^2_r(z_0)=\Q_{\sigma_0 r}^{-\frac{1}{4}b_B r^2, 0}(z_0) \qquad \Q^3_r(z_0)=\Q_{\sigma_0 r}^{-b_B r^2, -\frac{1}{2}b_B r^2}(z_0).$$
In Lemma \ref{upperboundlemma}, it was shown that there exists a constant $K_1 > \sigma_0$ depending only on the structure such that for all $K > K_1$, we have the upper bound on $\Gsb$ given in  \eqref{upperbound}. We can choose the constant $K$ large enough so that the bound in \eqref{lowerbound} is greater than the bound in \eqref{upperbound}. To this end, we fix $K > 0$ satisfying
\begin{equation}\label{choiceofK}
    K^2 > \max \left\{\frac{c_2}{c_1 b_B^{2n}}, K_1^2 \right\}.
\end{equation}

\begin{theorem}\label{growthlemma} (``Growth Lemma'' under \hyperref[H1]{H1}) Let $z_0 \in \O$, and consider an open set $D \subseteq \Q^1_r(z_0)\subset \O$ such that $D\cap \Q^2_r(z_0)\neq \emptyset$. Suppose $u\in C^2(D) \cap C\left(\overline{D}\right)$ is nonnegative in $D$, vanishes on $\de D\cap \Q^1_r(z_0)$, and satisfies $\elle_A u \geq 0$ in $D$. Assume, moreover, that the Cordes-Landis condition \hyperref[H1]{H1} holds for the operator $\elle_A$. Then there exists a structural constant $\eta > 0$ such that
$$\sup\limits_D u \geq \left(1 + \eta \dfrac{|\Q^3_r(z_0) \backslash D|}{|\Q^3_r(z_0)|} \right) \sup\limits_{D \cap \Q^2_r(z_0)} u.$$
\end{theorem}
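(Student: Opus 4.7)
My plan is to compare $u$ with a barrier of the form $v(z):=\alpha_0-\gamma_0\,U_E(z)$ on $D$, where $E:=\Q^3_r(z_0)\setminus D$ and $U_E$ is the potential \eqref{defUE} built from the kernel $\Gsb$ with the parameter choices of Subsection \ref{cordeslandiscondition}. Since $D\cap E=\emptyset$, Lemma \ref{lemuu} says that $U_E$ is $C^2$ and $\elle_A$-subharmonic on $D$; hence for any $\alpha_0,\gamma_0>0$ the function $v$ is an $\elle_A$-supersolution on $D$ and $w:=u-v$ is an $\elle_A$-subsolution to which the weak maximum principle \eqref{MP} can be applied.

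Setting $M:=\sup_D u$ and $m:=\sup_{D\cap\Q^2_r(z_0)}u$, and assuming $m>0$ (else the claim is trivial), I will tune $\alpha_0,\gamma_0$ so that $v\geq 0$ on $\overline{\Q^1_r(z_0)}$ and $v\geq M$ on the parabolic boundary $\de_p\Q^1_r(z_0)$. Since $u=0$ on $\de D\cap\Q^1_r(z_0)$ and $u\leq M$ on the remainder of $\de D$, this will automatically give $v\geq u$ on $\de D$, whence $u\leq v$ throughout $D$ by \eqref{MP}. The four ingredients I need are:
\begin{itemize}
\item $U_E\equiv 0$ on the bottom face of $\Q^1_r(z_0)$, because $\Gsb(\zeta^{-1}\circ z)=0$ whenever $t\leq\tau$ and every $\zeta=(\xi,\tau)\in E$ has $\tau\geq t_0-b_Br^2$;
\item $U_E\leq U_1:=|E|(b_Br^2)^{-sQ/2}\exp(-c_1K^2/b_B)$ on the lateral face of $\Q^1_r(z_0)$, by Lemma \ref{upperboundlemma};
\item the global bound $\sup_{\RNu}U_E\leq\tilde C:=\tilde C_0\,r^{Q+2-sQ}$, obtained by combining the scaling \eqref{maxr} with the uniform estimate \eqref{boundint} applied to the reference set $\Q^3_1(0)$;
\item $U_E\geq U_2:=|E|(b_Br^2)^{-sQ/2}\exp(-c_2/b_B^{2n+1})$ on $\Q^2_r(z_0)$, by Lemma \ref{lowerboundlemma}, with $U_2>U_1$ guaranteed by the choice \eqref{choiceofK} of $K$.
\end{itemize}
The natural choice $\gamma_0:=M/(\tilde C-U_1)$ and $\alpha_0:=M\tilde C/(\tilde C-U_1)$ then satisfies all these one-sided requirements by direct inspection: $v=\gamma_0(\tilde C-U_E)\geq 0$ globally, $v=\alpha_0\geq M$ on the bottom, and $v\geq\alpha_0-\gamma_0U_1=M$ on the lateral face.

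Applying \eqref{MP} and restricting to $z\in D\cap\Q^2_r(z_0)$, where $U_E\geq U_2$, yields
\[
m\leq\alpha_0-\gamma_0 U_2=M\cdot\frac{\tilde C-U_2}{\tilde C-U_1},\qquad\text{equivalently}\qquad M\geq m\left(1+\frac{U_2-U_1}{\tilde C}\right).
\]
The proof is completed by a short dimensional check: $U_2-U_1=c\,|E|(b_Br^2)^{-sQ/2}$ with $c:=e^{-c_2/b_B^{2n+1}}-e^{-c_1K^2/b_B}>0$ structural, while $\tilde C\sim r^{Q+2-sQ}$; combined with $|\Q^3_r(z_0)|\sim r^{Q+2}$ and $|E|=\mu\,|\Q^3_r(z_0)|$, the powers of $r$ cancel and produce $(U_2-U_1)/\tilde C=\eta\,\mu$ for a structural constant $\eta>0$.

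I expect the main subtlety to lie not in the maximum principle argument, which is classical Landis-style, but in the bookkeeping of scales: the three quantities $U_1$, $U_2$, and $\tilde C$ each depend on $r$ and on $|E|$, and only after the cancellations above do they combine into the scale-invariant gain factor $1+\eta\mu$. The Cordes--Landis hypothesis \hyperref[H1]{H1} itself is used only upstream, through the convergence criterion of Lemma \ref{lemdd} and the subsolution property of Lemma \ref{lemuu}.
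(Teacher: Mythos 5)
Your proof follows the paper's proof almost exactly: the barrier $U_E$ built on $E:=\Q^3_r(z_0)\setminus D$ with the parameters of Subsection \ref{cordeslandiscondition}, the weak maximum principle, and the three quantitative inputs \eqref{maxr}--\eqref{boundint}, \eqref{upperbound}, \eqref{lowerbound} are precisely the ingredients the paper uses. The only difference is the affine normalization of the comparison function. The paper takes $v = M\bigl(1 - U_E/\tilde C + U_1/\tilde C\bigr)$ (additive correction), while you take $v = M(\tilde C - U_E)/(\tilde C - U_1)$ (multiplicative rescaling). The two are interchangeable \emph{provided} $\tilde C > U_1$; that inequality is what makes your $\gamma_0,\alpha_0$ positive. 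This is not automatic from \eqref{choiceofK}: $\tilde C = \tilde C_0 r^{Q+2-sQ}$ is the global bound for $U_{\Q^3_r}$ from Lemma \ref{lemdd}, while $U_1 = |E|(b_Br^2)^{-sQ/2}e^{-c_1K^2/b_B} \le |\Q^3_1|b_B^{-sQ/2}e^{-c_1K^2/b_B}\,r^{Q+2-sQ}$, so $\tilde C>U_1$ holds once $K$ is large enough that $|\Q^3_1|b_B^{-sQ/2}e^{-c_1K^2/b_B}<\tilde C_0$, but \eqref{choiceofK} alone only enforces $U_2>U_1$. You should either add this as a further (harmless, structural) lower bound on $K$, or switch to the paper's additive form of $v$, which sidesteps the issue entirely: there $v\ge 0$ needs only $U_E\le\tilde C$, $v\ge M$ on the lateral face needs only $U_E\le U_1$ there, and no sign condition on $\tilde C - U_1$ is required. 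One smaller imprecision: the step from $m \le M(\tilde C - U_2)/(\tilde C - U_1)$ to $M \ge m\bigl(1 + (U_2-U_1)/\tilde C\bigr)$ is not an equivalence but a weakening (you are replacing $1/(\tilde C - U_2)$ by $1/\tilde C$, using $U_2\ge 0$); the conclusion is fine, but call it an implication rather than ``equivalently.''
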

\begin{proof} 
By translation invariance of the class of operators under consideration, we may assume $z_0 = 0\in\O$. We may also assume $u$ is non-trivial, and so $D$ has limit points on $\de_p\Q^1_r(z_0)=S^1_r \cup \left(B_{Kr}(0) \times \left\{-b_Br^2 \right\} \right)$ by the weak maximum principle \eqref{MP} (recall from Section \ref{secest} that $S^1_r=\de B_{Kr}(0)\times [-b_Br^2 ,0]$). Let $E = \Q^3_r \backslash D$ and consider the function $U_E$ defined in \eqref{defUE} with the choice of $\Gsb$ as in subsection \ref{cordeslandiscondition} (recall \eqref{defh1bs}). If we call $C$ the positive structural constant given in \eqref{boundint} such that $\sup U_{\Q^3_1}\leq C$, then we have by \eqref{maxr}
\begin{equation}\label{maxrE}
\sup{U_E}\leq \sup{U_{\Q^3_r}}\leq C r^{Q+2-sQ}.
\end{equation} 
Moreover, by the bounds \eqref{upperbound} and \eqref{lowerbound}, we have
\begin{equation}\label{boundsonsides}
\sup_{S^1_r} U_E \leq \frac{e^{-\mu_2}}{(b_B r^2)^{s\frac{Q}{2}}}|E|\qquad\mbox{with }\mu_2:=\frac{c_1 K^2}{b_B} ,
\end{equation}
and
\begin{equation}\label{boundsontop}
\inf_{\Q^2_r} U_E \geq \frac{e^{-\mu_1}}{(b_B r^2)^{s\frac{Q}{2}}}|E|\qquad\mbox{with }\mu_1:= \frac{c_2}{b_B^{2n+1}}.
\end{equation}
Consider the auxiliary function
$$v(z) = \sup_D u \left(1 - \frac{U_E(z)}{C r^{Q+2-sQ}} + \frac{|E|}{C r^{Q+2}}\frac{e^{-\mu_2}}{(b_B)^{s\frac{Q}{2}} }\right).$$
Then $v$ is non-negative everywhere by \eqref{maxrE}. Since $\elle_A U_E(z)\geq 0$ for all $z\in\O\smallsetminus \overline{E}$ by Lemma \ref{lemuu}, we have $\elle_A v \leq 0\leq \elle_A u$ in $D$. We now want to compare $v$ and $u$ on the portion of $\partial D$ required to apply the weak maximum principle. For this purpose, we define the sets $\gamma:=\de D\cap \Q^1_r$, $\gamma_1 := \overline{D} \cap (B_{Kr}(0) \times \left\{-b_Br^2 \right\})$, and $\gamma_2 := \overline{D} \cap S^1_r$. Since $u = 0$ on $\gamma$, $v \geq u$ on $\gamma$. Recall also that $U_E$ is a continuous function. Since $U_E(z) = 0$ for $z \in B_{Kr}(0) \times \left\{-b_B r^2 \right\}$, we then have $v(z) \geq \sup_D u \geq u(z)$ for all $z \in \gamma_1$. Finally, for $z \in \gamma_2$, we have by \eqref{boundsonsides}
$$    v(z)  \geq \sup_D u \left(1 - \dfrac{\sup_{S^1_r} U_E}{C r^{Q+2-sQ}} + \frac{|E|}{C r^{Q+2}}\frac{e^{-\mu_2}}{(b_B)^{s\frac{Q}{2}}}\right)\geq \sup_D u. $$
Thus $v \geq u$ on $\gamma_2$. By the weak maximum principle, it follows that $v \geq u$ in $D$. Hence, for $z \in D \cap \Q^2_r\neq \emptyset$, we have by \eqref{boundsontop}
\begin{align*}
    u(z) & \leq \sup_D u \left(1 - \frac{\inf_{\Q^2_r} U_E}{C r^{Q+2-sQ}} + \frac{|E|}{C r^{Q+2}}\frac{e^{-\mu_2}}{(b_B)^{s\frac{Q}{2}}}\right) 
     \leq \sup_D u \left(1 - \frac{|E|}{C r^{Q+2}}\frac{e^{-\mu_1}}{ (b_B)^{s\frac{Q}{2}}} + \frac{|E|}{C r^{Q+2}}\frac{e^{-\mu_2}}{(b_B)^{s\frac{Q}{2}}}\right) \\
    & = \sup_D u \left(1 - \frac{e^{-\mu_1} - e^{-\mu_2}}{C \, (b_B )^{s\frac{Q}{2}}} \frac{|E|}{r^{Q+2}} \right).
\end{align*}
By \eqref{choiceofK}, we have $\mu_1 < \mu_2$. Hence, we can define $\bar{\eta} := \dfrac{e^{-\mu_1} - e^{-\mu_2}}{C\,b_B^{sQ/2}} > 0$ and we can write
$$\bar{\eta}\, \frac{|E|}{r^{Q+2}}=\bar{\eta}\,\frac{|\Q^3_r \backslash D|}{r^{Q+2}}=\left(\bar{\eta}|\Q^3_1|\right)\frac{|\Q^3_r \backslash D|}{|\Q^3_r|}=:\eta\,\frac{|\Q^3_r \backslash D|}{|\Q^3_r|}.$$
This completes the proof.
\end{proof} 

The proof of Theorem \ref{growthlemma} also allows us to obtain the following version of the growth lemma under the condition \hyperref[H2]{H2}. More specifically, we assume the continuity assumption \hyperref[H2]{H2} holds for the operator $\elle_A$ and we let $\epsilon_0$ be the constant from Lemma \ref{barriersforH2} corresponding to the choice $s_0 := \frac{1}{Q}$, which we fix from here onwards.

\begin{theorem}\label{growthlemmaH2} (``Growth Lemma'' under \hyperref[H2]{H2})  Let $z_0 \in \O$, and suppose $0 < r \leq \frac{\epsilon_0}{K}$. Consider an open set $D \subseteq \Q^1_r(z_0)\subset \O$ such that $D\cap \Q^2_r(z_0)\neq \emptyset$. Suppose $u\in C^2(D) \cap C\left(\overline{D}\right)$ is nonnegative in $D$, vanishes on $\de D\cap \Q^1_r(z_0)$, and satisfies $\elle_A u \geq 0$ in $D$. Assume, moreover, that the continuity condition \hyperref[H2]{H2} holds for the operator $\elle_A$. Then there exists a constant $\eta > 0$ such that
$$\sup\limits_D u \geq \left(1 + \eta \dfrac{|\Q^3_r(z_0) \backslash D|}{|\Q^3_r(z_0)|} \right) \sup\limits_{D \cap \Q^2_r(z_0)} u.$$
\end{theorem}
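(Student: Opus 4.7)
\medskip

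\noindent\textbf{Proof proposal.} The plan is to mimic the proof of Theorem \ref{growthlemma} verbatim, replacing the Cordes--Landis barrier from subSection \ref{cordeslandiscondition} with the continuity-based barrier from subSection \ref{continuityassumption}. By left-translation invariance we may assume $z_0=0\in\O$. Fix the parameters $s_0=\frac{1}{Q}$, so that
\[
s=1+\tfrac{1}{Q},\qquad \beta=\tfrac{2Q}{2Q+1},\qquad A_0=A(0),
\]
as in \eqref{pick}, and let $\e_0$ be the constant produced by Lemma \ref{barriersforH2} for this $s_0$. Since $s<1+\frac{2}{Q}$, Lemma \ref{lemdd} applies and the potential $U_E$ is still well defined and bounded for any $E\subset\RN\times(T_1,T_2)$; note that the constants $c_1, c_2, K_1$ in Lemmas \ref{upperboundlemma}--\ref{lowerboundlemma} now become structural once $s$ and $\beta$ are fixed, so we may take $K$ satisfying \eqref{choiceofK} as in the H1 case.

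The crucial role of the hypothesis $0<r\leq \e_0/K$ is the inclusion
\[
\Q^1_r(0)=\Q_{Kr}^{-b_B r^2,\,0}\;\subset\;\Q_{\e_0}^{-\e_0^2,\,\e_0^2}(0),
\]
which follows from $Kr\leq \e_0$ together with $b_B r^2\leq b_B\e_0^2/K^2\leq \e_0^2$ (guaranteed by $b_B\leq (\sigma_0/\bar\sigma)^2\leq 1$ and $K>\sigma_0$). Consequently the entire domain $D\subseteq \Q^1_r(0)$ lies inside the neighborhood of $z_0=0$ where Lemma \ref{barriersforH2} produces the subsolution property $\elle_A U_E\geq 0$ on $D\setminus\overline{E}$, whenever $E=\Q^3_r\setminus D\subset\Q^3_r\subset\Q^1_r$.

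With these ingredients in place, we introduce the auxiliary function
\[
v(z)\;=\;\sup_D u\left(1-\frac{U_E(z)}{C\,r^{Q+2-sQ}}+\frac{|E|}{C\,r^{Q+2}}\cdot\frac{e^{-\mu_2}}{b_B^{sQ/2}}\right),
\]
exactly as in the proof of Theorem \ref{growthlemma}, with $C$ the structural constant from \eqref{boundint} and $\mu_2=c_2/b_B^{2n+1}$ from \eqref{boundsonsides}. The estimate \eqref{maxrE} still gives $v\geq 0$ in $\RNu$, while $\elle_A v\leq 0\leq \elle_A u$ in $D$ follows from Lemma \ref{barriersforH2} together with the inclusion established above. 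On $\gamma=\de D\cap \Q^1_r$ we have $u=0\leq v$; on $\gamma_1=\overline{D}\cap(B_{Kr}(0)\times\{-b_Br^2\})$ we have $U_E=0$ and hence $v\geq \sup_D u\geq u$; finally, on $\gamma_2=\overline{D}\cap S^1_r$ the upper bound \eqref{boundsonsides} yields $v\geq \sup_D u\geq u$ as in the H1 case.

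The weak maximum principle \eqref{MP} then gives $v\geq u$ throughout $D$. Evaluating at any $z\in D\cap \Q^2_r\neq\emptyset$ and using the lower bound \eqref{boundsontop} with $\mu_1=c_1K^2/b_B$, we obtain
\[
u(z)\;\leq\;\sup_D u\left(1-\frac{e^{-\mu_1}-e^{-\mu_2}}{C\,b_B^{sQ/2}}\cdot\frac{|E|}{r^{Q+2}}\right).
\]
Since $K$ is chosen to satisfy \eqref{choiceofK}, we have $\mu_1<\mu_2$, so $\eta:=\bigl((e^{-\mu_1}-e^{-\mu_2})/(C\,b_B^{sQ/2})\bigr)|\Q^3_1|>0$ and the stated inequality follows. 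The only ``obstacle'', and really the sole difference with the H1 proof, is the careful bookkeeping ensuring that $\Q^1_r(z_0)$ sits inside the continuity-controlled neighborhood $\Q_{\e_0}^{-\e_0^2,\e_0^2}(z_0)$; this is precisely what the restriction $r\leq \e_0/K$ encodes, and it is the reason the growth lemma under \hyperref[H2]{H2} holds only at sufficiently small scales.
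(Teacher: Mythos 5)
Your proof is correct and follows essentially the same route as the paper's: the paper simply defers to the proof of Theorem \ref{growthlemma}, replacing the Cordes--Landis barrier with the one from Lemma \ref{barriersforH2}, and observes that the constraint $r\le \e_0/K$ forces $D\subseteq\Q^1_r(z_0)\subset\Q_{\e_0}^{-\e_0^2,\e_0^2}(z_0)$, so the subsolution property holds throughout $D\setminus\overline{E}$. One small imprecision in your justification of $b_Br^2\le\e_0^2$: invoking $b_B\le1$ together with ``$K>\sigma_0$'' does not by itself give $K\ge1$, since nothing forces $\sigma_0\ge1$; the relevant fact is that $K\ge K_1\ge 8\bar\sigma\ge 8>1$ (and $\bar\sigma\ge1$ since a unit vector in the $p_0$-block has $|\cdot|_B$-norm equal to $1$), which is what the paper's parenthetical remark ``$K\ge 1\ge\sqrt{b_B}$'' encodes.
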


\begin{proof} The proof is essentially that of Theorem \ref{growthlemma}. The only modification is that the function $U_E$ is now constructed with the choice of $\Gsb$ as in subsection \ref{continuityassumption}, see \eqref{pick}. By Lemma \ref{barriersforH2}, $U_E$ is a subsolution only inside a cylinder $\Q_{\e_0}^{-\e_0^2,\e_0^2}(z_0)$ of size $\e_0$ depending on the modulus of continuity $\omega$ for the coefficients $A(\cdot)$. The assumption $0 < r \leq \frac{\e_0}{K}$ (recall that $K\geq 1 \geq\sqrt{b_B}$) yields $D \subseteq \Q^1_r(z_0)\subset \O\cap \Q_{\e_0}^{-\e_0^2,\e_0^2}(z_0)$.
\end{proof}

We provide an immediate application of Theorem \ref{growthlemma} and Theorem \ref{growthlemmaH2} by showing oscillation decay and H\"older continuity of solutions to $\elle_A u = 0$. Recall that the oscillation of a function $u$ over a set $E$ is defined to be $\mathrm{osc}_{E} u := \sup_E u - \inf_E u$.

\begin{corollary}\label{oscillationdecayandHolder}(Oscillation Decay and Local H\"older Continuity under \hyperref[H1]{H1}) Suppose the operator $\elle_A$ satisfies the hypothesis \hyperref[H1]{H1}. There exists a structural constant $\theta > 1$ such that if $u$ is a $C^2$ solution of $\elle_A u = 0$ in an open set $D\subseteq \O$ and $\Q^1_r(z_0)\Subset D$, then 
\begin{equation}\label{oscdecay}
\mathrm{osc}_{\Q^1_{r}(z_0)} u  \geq \left(1 + \frac{\eta}{4}\right) \ \mathrm{osc}_{\Q^1_{r/\theta}(z_0)} u.
\end{equation}
Consequently, there exists a structural constant $\alpha$ and, for any $\rho>0$, a positive constant $C_\rho$ such that
\begin{equation}\label{BHolderestimate}
|u(z) - u(\zeta)|\leq C_\rho\left\|\zeta^{-1}\circ z\right\|^{\alpha}_B \left\|u\right\|_{L^{\infty}(D)}\quad\mbox{ for all }z,\zeta\mbox{ such that }\Q^1_\rho(z),\Q^1_\rho(\zeta)\Subset D.
\end{equation}
\end{corollary}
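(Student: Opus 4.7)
The plan is to first establish the oscillation decay \eqref{oscdecay} via a dichotomy argument based on the growth lemma (Theorem \ref{growthlemma}), and then to deduce the H\"older estimate \eqref{BHolderestimate} by iterating \eqref{oscdecay}, with care paid to the one-sided (past) nature of the cylinders $\Q^1_r(w)$.

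For the oscillation decay, by translation invariance I may assume $z_0 = 0$. Set $M := \sup_{\Q^1_r} u$ and $m := \inf_{\Q^1_r} u$. The functions $w_1 := u - m$ and $w_2 := M - u$ are nonnegative $\elle_A$-solutions in $\Q^1_r$, and since $w_1 + w_2 \equiv M - m$ pointwise, the two level sets $\{w_i \geq (M-m)/2\}$ ($i = 1,2$) cover $\Q^3_r$, so at least one of them has measure at least $|\Q^3_r|/2$ inside $\Q^3_r$. Assume first that it is the one for $w_1$ (the other case being symmetric). Introduce the open set $D := \{z \in \Q^1_r : w_1(z) < (M-m)/2\}$. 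If $D \cap \Q^2_r = \emptyset$, then $u \geq (M+m)/2$ in $\Q^2_r$ and already $\mathrm{osc}_{\Q^2_r} u \leq (M-m)/2$; otherwise, the auxiliary function $\tilde w := (M-m)/2 - w_1$ is a nonnegative $\elle_A$-subsolution on $D$ vanishing on $\partial D \cap \Q^1_r$, and Theorem \ref{growthlemma}, together with $|\Q^3_r \setminus D|/|\Q^3_r| \geq 1/2$, yields $\sup_{D \cap \Q^2_r}\tilde w \leq (1+\eta/2)^{-1}\sup_D \tilde w \leq (1+\eta/2)^{-1}(M-m)/2$. Combining with the trivial bound $w_1 \geq (M-m)/2$ on $\Q^2_r \setminus D$, I deduce $\mathrm{osc}_{\Q^2_r} u \leq \gamma\,\mathrm{osc}_{\Q^1_r} u$ for some structural $\gamma \in (0,1)$. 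Since $\Q^1_{r/\theta} \subseteq \Q^2_r$ for $\theta := \max\{K/\sigma_0,2\}$ (directly from the definitions in Section \ref{secest}), renaming the structural constant $\eta$ so that $1/\gamma = 1 + \eta/4$ produces \eqref{oscdecay}.

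For the H\"older estimate, I iterate \eqref{oscdecay}: for every $w$ with $\Q^1_\rho(w) \Subset D$ and every $k \in \N$,
$$\mathrm{osc}_{\Q^1_{\rho/\theta^k}(w)} u \,\leq\, (1+\eta/4)^{-k}\,\mathrm{osc}_{\Q^1_\rho(w)} u \,\leq\, 2\,(1+\eta/4)^{-k}\,\|u\|_{L^\infty(D)}.$$
Given two points $z, \zeta \in D$ with $\Q^1_\rho(z), \Q^1_\rho(\zeta) \Subset D$, the idea is to choose an auxiliary base point $w$ with spatial part close to $z$ and $\zeta$ and time coordinate slightly above $\max\{t_z, t_\zeta\}$, so that $z$ and $\zeta$ sit in one common past-cylinder $\Q^1_{\rho/\theta^k}(w)$. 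The quantitative estimate \eqref{benne} of Lemma \ref{normsarecomparable}, combined with the composition and dilation laws of the group, allows one to bound $\|w^{-1}\circ z\|_B$ and $\|w^{-1}\circ \zeta\|_B$ by a structural multiple of $\|\zeta^{-1}\circ z\|_B$. Choosing $k$ as the largest integer for which $\rho/\theta^k$ is comparable to $\|\zeta^{-1}\circ z\|_B$ and $\Q^1_{\rho/\theta^k}(w) \Subset D$, one obtains
$$|u(z)-u(\zeta)| \,\leq\, \mathrm{osc}_{\Q^1_{\rho/\theta^k}(w)} u \,\leq\, C_\rho\, \|\zeta^{-1}\circ z\|_B^{\alpha}\,\|u\|_{L^\infty(D)},\qquad \alpha := \frac{\log(1+\eta/4)}{\log\theta}.$$

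The most delicate step is the last one. The decay \eqref{oscdecay} is anchored at a single base point, whereas \eqref{BHolderestimate} compares values at two arbitrary points, and the asymmetry of $\Q^1_r(w) = w \circ \Q_{Kr}^{-b_Br^2,0}$ (which extends only into the past of $w$) forbids simply centering a cylinder at $z$ or at $\zeta$. The choice of the auxiliary $w$ and the quantitative control of $w^{-1}\circ z$, $w^{-1}\circ \zeta$ via \eqref{benne} are what overcome this asymmetry; they also explain the dependence of $C_\rho$ on $\rho$, since $w$ must remain at distance at least comparable to $\rho$ from $\partial D$ for the oscillation decay to be applicable throughout the iteration.
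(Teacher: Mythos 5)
Your proof of the oscillation decay \eqref{oscdecay} is correct and follows essentially the same route as the paper's: the paper uses $v = 2u - (\sup_{\Q^2_r}u + \inf_{\Q^2_r}u)$ and its positivity set $D^+$, while you use $w_1 = u-m$, $w_2 = M-u$ and the complement set $D$; up to this cosmetic change (and a slightly worse but still structural $\gamma$), the two arguments are identical, and your rescaling $\gamma \mapsto 1/(1+\eta/4)$ is the appropriate cleanup. Your choice $\theta = \max\{K/\sigma_0, 2\}$ is fine (the paper simply has $K/\sigma_0 \geq 2$ by construction).

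The H\"older part, however, contains a genuine gap. Your premise --- that the one-sidedness of $\Q^1_r(w) = w \circ \Q_{Kr}^{-b_B r^2,0}$ ``forbids simply centering a cylinder at $z$ or at $\zeta$'' --- is not correct, and the workaround you build on it does not close. After normalizing $t_z \leq t_\zeta$, the past cylinder $\Q^1_{\rho'}(\zeta)$ centered at the \emph{later-in-time} point $\zeta$ extends backward in time and therefore \emph{can} contain $z$; the paper exploits exactly this, dichotomizing on whether $z \in \Q^1_\rho(\zeta)$, choosing $m_0$ with $z \in \Q^1_{\rho/\theta^{m_0}}(\zeta) \setminus \Q^1_{\rho/\theta^{m_0+1}}(\zeta)$, and iterating \eqref{oscdecay} \emph{from $\zeta$}, where $\Q^1_\rho(\zeta) \Subset D$ is a standing hypothesis. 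Your alternative introduces an auxiliary point $w$ strictly in the future of both $z$ and $\zeta$, but the iteration must start from $\Q^1_\rho(w)$, and \emph{nothing} in the hypotheses gives $\Q^1_\rho(w) \Subset D$: the only cylinders known to sit compactly inside $D$ are $\Q^1_\rho(z)$ and $\Q^1_\rho(\zeta)$, both of which extend into the past, so they give no control over a cylinder based at a strictly later time. You acknowledge that ``$w$ must remain at distance comparable to $\rho$ from $\partial D$,'' but this is precisely the unproved assertion. The fix is to drop the auxiliary $w$ entirely and anchor the iteration at $\zeta$ as above; the remaining degenerate case $z \notin \Q^1_\rho(\zeta)$ is handled by the trivial bound $|u(z)-u(\zeta)| \leq 2\|u\|_{L^\infty(D)}$ combined with $\|\zeta^{-1}\circ z\|_B \gtrsim \sqrt{b_B}\,\rho$.
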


\begin{proof} We first prove \eqref{oscdecay}. Let $\theta := \frac{K}{\sigma_0}$. Recalling that $\theta\geq 2$, we have $\Q^1_{r/\theta}(z_0) \subseteq \Q^2_r(z_0)$ and so $\mathrm{osc}_{\Q^1_{r/\theta}(z_0)} u \leq \mathrm{osc}_{\Q^2_{r}(z_0)} u$. Consider the function
$$v = 2 u - \left(\sup\limits_{\Q^2_r(z_0)} u + \inf\limits_{\Q^2_r(z_0)} u\right),$$
and let $D^+ := \Q^1_r(z_0) \cap \left\{v > 0 \right\}$. We may assume, without loss of generality, that $|\Q^3_r(z_0) \backslash D^+| \geq \frac{1}{2}|\Q^3_r(z_0)|$; otherwise, consider $-v$ instead of $v$. In addition, it suffices to assume $D^+ \cap \Q^2_r(z_0) \neq \emptyset$, as otherwise the function $v$ is identically constant in $\Q^2_r(z_0)$ and $\mathrm{osc}_{\Q^1_{r/\theta}(z_0)} u=0$. Applying Theorem \ref{growthlemma} to the function $v$ with $D = D^+$, we obtain
\begin{eqnarray*}
2\mathrm{osc}_{\Q^1_r(z_0)} u - \mathrm{osc}_{\Q^2_r(z_0)} u&\geq& \sup\limits_{\Q^1_r(z_0)} v \geq \left(1 + \eta \dfrac{|\Q^3_r(z_0) \backslash D^+|}{|\Q^3_r(z_0)|} \right) \sup\limits_{D^+ \cap \Q^2_r(z_0)} v \geq \left(1 + \dfrac{\eta}{2} \right) \sup\limits_{\Q^2_r(z_0)} v \\
&=& \left(1 + \dfrac{\eta}{2} \right)\mathrm{osc}_{\Q^2_r(z_0)} u.
\end{eqnarray*}
which implies
\begin{equation}\label{oscdec}
\mathrm{osc}_{\Q^1_{r}(z_0)} u  \geq P \,\mathrm{osc}_{\Q^2_{r}(z_0)} u
\geq P \,\mathrm{osc}_{\Q^1_{r/\theta}(z_0)} u\quad\,\mbox{with }\,\,P := 1 + \dfrac{\eta}{4}.
\end{equation}

To prove the estimate \eqref{BHolderestimate}, fix $\rho>0$ and let $z,\zeta$ be arbitrary points in $D$ such that $\Q^1_\rho(z),\Q^1_\rho(\zeta)\Subset D$. With no loss of generality, we may assume $t\leq \tau$. We have two cases: either $z\in \Q^1_\rho(\zeta)$ or $z\notin \Q^1_\rho(\zeta)$.\\ 
If $z\in \Q^1_\rho(\zeta)$, choose $m_0 \in \mathbb{N}\cup\{0\}$ such that $z \in \Q^1_{\frac{\rho}{\theta^{m_0}}}(\zeta)$ and $z \notin \Q^1_{\frac{\rho}{\theta^{m_0+1}}}(\zeta)$. Hence, $\left\|\zeta^{-1}\circ z\right\|_B \geq \min\left\{K, \sqrt{b_B} \right\} \frac{\rho}{\theta^{m_0+1}}=\sqrt{b_B}\frac{\rho}{\theta^{m_0+1}}$. Applying \eqref{oscdec} recursively, we obtain
$$\mathrm{osc}_{\Q^1_{\frac{\rho}{\theta^{m_0}}}(\zeta)} u \leq \frac{1}{P^{m_0}} \mathrm{osc}_{\Q^1_\rho} u \leq \frac{2 P \left\|u\right\|_{L^{\infty}(D)}}{P^{m_0+1}}.$$
Writing $P^{m_0+1} = (\theta^{\log_{\theta} P})^{m_0 + 1} = (\theta^{m_0+1})^{\log_{\theta} P}$ and letting $\alpha:=\log_{\theta} P$, we get
$$|u(z) - u(\zeta)| \leq \text{osc}_{\Q^1_{\frac{\rho}{\theta^{m_0}}}(\zeta)} u \leq \frac{2 P \left\|u\right\|_{L^{\infty}(D)}}{b_B^{\frac{\alpha}{2}}\rho^{\alpha}}\left(\sqrt{b_B}\frac{\rho}{\theta^{m_0+1}}
 \right)^{\alpha} \leq  \frac{2 P \left\|u\right\|_{L^{\infty}(D)}}{b_B^{\frac{\alpha}{2}}\rho^{\alpha}} \left\|\zeta^{-1}\circ z\right\|^{\alpha}_B.$$
 On the other hand, if $z\notin \Q^1_\rho(\zeta)$ we simply have $\left\|\zeta^{-1}\circ z\right\|_B \geq \sqrt{b_B}\rho$ and then
 $$|u(z) - u(\zeta)| \leq 2 \left\|u\right\|_{L^{\infty}(D)}\leq \frac{2 \left\|u\right\|_{L^{\infty}(D)}}{b_B^{\frac{\alpha}{2}}\rho^{\alpha}} \left\|\zeta^{-1}\circ z\right\|^{\alpha}_B.$$
Combining the two possibilities, we obtain the desired estimate \eqref{BHolderestimate} with the choice $C_\rho=2 P b_B^{-\frac{\alpha}{2}}\rho^{-\alpha}$.
\end{proof}

In order to establish the corresponding version of Corollary \ref{oscillationdecayandHolder} under the hypothesis \hyperref[H2]{H2}, we notice that the proof can be carried out in the same manner simply by considering $\Q^1_r(z_0)\subset D$ with $0<r\leq\frac{\e_0}{K}$. The constant $C_{\rho}$ will now depend additionally on $\epsilon_0$; however, the constant $\alpha$ remains independent of $\epsilon_0$.

\begin{remark} 
The regularity estimate \eqref{BHolderestimate} is equivalent to local H\"older continuity in the standard sense. One can see this by comparing the $|\cdot|_B$-norm with the Euclidean norm as in \eqref{triangleB}--\eqref{benne} (see also \cite[Definition 1.2 and Proposition 2.1]{Poli}). \end{remark}

\section{Harnack Inequality}\label{harn}

In this final section, we prove the Harnack inequality for non-negative solutions to $\elle_A u = 0$ using the growth lemma. We follow closely the approach outlined by Landis in \cite[Lemma 9.1 and Theorem 9.1]{Landis} and make a number of necessary modifications to adapt his proof to our setting.

\begin{lemma}\label{incylinder}
There exist structural constants $C_1,C_2>0$ such that:
\begin{itemize}
\item[(i)] for any $R>0$ and any $0\leq\delta_1<\delta_2\leq\frac{1}{2}$, if $\rho\leq C_1 R (\delta_2-\delta_1)^{n+\frac{1}{2}}$ then
$$\Q_{K \rho}^{-b_B\rho^2,0}(z_0)\subseteq \Q_{R\left(\frac{1}{2}+\delta_2\right)}^{-b_B R^2\left(\frac{1}{2}+\delta_2\right),0}\qquad \forall\,z_0\in \overline{\Q_{R\left(\frac{1}{2}+\delta_1\right)}^{-b_B R^2\left(\frac{1}{2}+\delta_1\right),0}}; $$

\item[(ii)] for any $R>0$ and any $0\leq\delta_1<\delta_2\leq1$, if $\rho\leq C_2 R (\delta_2-\delta_1)^{n+\frac{1}{2}}$ then
$$\Q_{\rho}^{-b_B\rho^2,0}(z_0)\subseteq \Q_{R\frac{\sigma_0}{2}\left(1+\delta_2\right)}^{-\frac{b_B}{4} R^2\left(3+\delta^2_2\right),-\frac{b_B}{2} R^2}\smallsetminus \Q_{R\frac{\sigma_0}{2}\left(1+\delta_1\right)}^{-\frac{b_B}{4} R^2\left(3+\delta^2_1\right),-\frac{b_B}{2} R^2}\qquad \forall\,z_0\in \de_p \Q_{R\frac{\sigma_0}{2}\left(1+\frac{\delta_1+\delta_2}{2}\right)}^{-\frac{b_B}{4} R^2\left(3+\frac{(\delta_1+\delta_2)^2}{4}\right),-\frac{b_B}{2} R^2}. $$
\end{itemize}
\end{lemma}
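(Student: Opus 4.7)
The strategy in both parts is to parametrize the translated cylinder via the group law, then to estimate the resulting spatial and temporal coordinates using the anisotropic triangle inequality and the quantitative bound on $E(\tau) - \mathbb{I}_N$. Writing $z_0 = (x_0, t_0)$, a generic point of $\Q_{r'}^{-b_B\rho^2, 0}(z_0)$ (where $r' = K\rho$ in part (i), $r' = \rho$ in part (ii)) has the form $(\xi + E(\tau)x_0, t_0 + \tau)$ with $|\xi|_B < r'$ and $\tau \in (-b_B\rho^2, 0)$. The time coordinate is controlled directly, while for the spatial coordinate \eqref{triangleB} yields
$$|\xi + E(\tau)x_0|_B \leq r' + |x_0|_B + |(E(\tau) - \mathbb{I}_N)x_0|_B,$$
and the last term is then estimated by \eqref{benne}.

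For part (i), the hypothesis $|x_0|_B \leq R(1/2+\delta_1)$ and $t_0 \in [-b_B R^2(1/2+\delta_1), 0]$ yields the time constraint $\rho^2 \leq R^2(\delta_2 - \delta_1)$. Tracking the proof of \eqref{benne} with $|x_0|_B \leq R$ and $|\tau| \leq b_B\rho^2$, and using $|x_0^{(p_j)}| \leq |x_0|_B^{2j+1}$, one obtains an estimate for $|(E(\tau)-\mathbb{I}_N)x_0|_B$ as a sum of terms of the form $C(n,B)\,\rho^{2k/(2i+1)} R^{(2(i-k)+1)/(2i+1)}$ for $1 \leq k \leq i \leq n$. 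Imposing that the total contribution (together with the $K\rho$ term) is bounded by $R(\delta_2 - \delta_1)/2$---so that, combined with $|x_0|_B \leq R(1/2+\delta_1)$, the resulting norm is $\leq R(1/2+\delta_2)$---reduces to $\rho \leq R(\delta_2 - \delta_1)^{(2i+1)/(2k)}/C$ for every $(k,i)$. The most restrictive exponent is $(2n+1)/2 = n+1/2$, attained at $(k,i) = (1,n)$, and so choosing $C_1$ accordingly closes part (i); note that the time condition is automatically met since $(\delta_2-\delta_1)^{n+1/2} \leq (\delta_2 - \delta_1)^{1/2}$.

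Part (ii) splits into outer inclusion and inner exclusion. The outer inclusion is analogous to part (i): the required time bound becomes $\rho^2 \leq R^2 (\delta_2 - \delta_1)(3\delta_2 + \delta_1)/16$ (after factoring $\delta_2^2 - (\delta_1+\delta_2)^2/4$), which is implied by $\rho \leq C_2 R(\delta_2-\delta_1)^{n+1/2}$, and the spatial argument is identical. For the exclusion from the inner cylinder, I would split into cases according to which face of $\partial_p$ of the middle cylinder contains $z_0$. If $z_0$ lies on the bottom face, then $t_0 = -\tfrac{b_B R^2}{4}\bigl(3 + (\delta_1+\delta_2)^2/4\bigr) < -\tfrac{b_B R^2}{4}(3 + \delta_1^2)$ (strictly, since $\delta_2 > \delta_1 \geq 0$), and $t_0 + \tau < t_0$ lies strictly below the inner time range. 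If instead $z_0$ is on the lateral face, then $|x_0|_B = R\sigma_0(1 + (\delta_1+\delta_2)/2)/2$, and the reverse triangle inequality combined with \eqref{benne} gives $|\xi + E(\tau)x_0|_B \geq |x_0|_B - \rho - |(E(\tau) - \mathbb{I}_N)x_0|_B$, which exceeds $R\sigma_0(1+\delta_1)/2$ under the same constraint.

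The main obstacle is the careful bookkeeping of the anisotropic exponents arising from the dilations $D_r$: one must identify the slowest-decaying term in the double sum produced by \eqref{benne}, which is $(k,i) = (1,n)$ and scales like $\rho^{2/(2n+1)} R^{(2n-1)/(2n+1)}$. Balancing this against $R(\delta_2 - \delta_1)$ yields precisely the exponent $n + 1/2$ appearing in the statement of both constants $C_1$ and $C_2$. Beyond this, every step is a direct application of \eqref{triangleB} and the structural bound $\|E(\tau)\| \leq 2$ from \eqref{bibi}.
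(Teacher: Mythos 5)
Your argument is correct and follows the same strategy as the paper: parametrize the translated cylinder via the group law, control the time coordinate directly, and control the spatial coordinate with the anisotropic triangle inequality \eqref{triangleB} together with a quantitative bound on $\left|\left(E(\tau)-\mathbb{I}_N\right)x_0\right|_B$. The only technical variation is that you re-derive this bound by tracking the double sum in the proof of \eqref{benne} with $\left|x_0^{(p_j)}\right|\le |x_0|_B^{2j+1}\le R^{2j+1}$, whereas the paper first normalizes by the dilation $D_{\bar\sigma/|x_0|_B}$ (so that the rescaled point has Euclidean norm controlled by $(\bar\sigma/\sigma_0)^{2n+1}$) and then invokes \eqref{benne} as a black box; both routes identify $(k,i)=(1,n)$ as the worst term and recover the exponent $n+\tfrac12$, so they are interchangeable.
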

We postpone the proof of this lemma to the end of the section. For now, we use it to prove the following important consequence of the growth lemma.

\begin{lemma}\label{lemma91} Let $\bar{z} \in \O$, and consider an open set $D \subseteq \Q^{-b_B R^2,0}_R(\bar{z})\subset \O$ such that $D^+:=D\cap \Q^{-\frac{1}{2}b_B R^2,0}_{\frac{1}{2}R}(\bar{z})\neq \emptyset$. Suppose $u\in C^2(D)\cap C\left(\overline{D}\right)$, nonnegative in $D$, vanishes on $\de D\cap \Q^{-b_B R^2,0}_R(\bar{z})$, and satisfies $\elle_A u = 0$ in $D$. Assume, moreover, that the Cordes-Landis condition \hyperref[H1]{H1} holds for the operator $\elle_A$. Then, for any $M>1$, there exists $\delta > 0$ (depending on $M$ and on structural constants) such that, if $|D|\leq \delta R^{Q+2}$, we have
$$\sup\limits_D{u} \geq M \sup\limits_{D^+}{u}.$$
\end{lemma}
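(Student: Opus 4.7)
The strategy is to iterate the growth lemma (Theorem \ref{growthlemma}) along a chain of points inside $D$, producing a geometric sequence of $u$-values with ratio $1+\eta/2$; the smallness of $|D|$ ensures each step has this multiplicative gain, and the chain is kept inside the outer cylinder via Lemma \ref{incylinder}(i). After performing enough steps to cumulate the factor $M$, the conclusion follows.

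By translation invariance we may take $\bar z = 0$, and we may assume $\bar z \in D$ (else $u(\bar z) = 0$ under the canonical extension and there is nothing to prove). Openness of $D$ at $\bar z$ and continuity of $u$ imply $\bar z \in \overline{D^+}$, hence $u(\bar z) \leq \sup_{D^+} u$; it thus suffices to prove $\sup_D u \geq M \sup_{D^+} u$. Fix $N \in \mathbb{N}$ with $(1+\eta/2)^N \geq M$, and set
\[
\delta_k := \frac{k}{2N}\quad (k=0,\ldots,N),\qquad r_* := C_1 R\,(2N)^{-(n+1/2)},
\]
with $C_1$ from Lemma \ref{incylinder}(i). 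Writing $c_3$ for the structural constant with $|\Q^3_r(z_0)| = c_3 r^{Q+2}$, we set
\[
\delta := \frac{c_3\, C_1^{Q+2}}{2\,(2N)^{(n+1/2)(Q+2)}},
\]
which depends on $M$ (through $N$) and on structural constants. With this choice, $|D|/|\Q^3_{r_*}(z_0)| \leq 1/2$ for every center $z_0$.

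We construct inductively points $z_k \in \overline{\Q_{R(1/2+\delta_k)}^{-b_B R^2(1/2+\delta_k),0}}\cap D$ with $u(z_k) \geq (1+\eta/2)^k \sup_{D^+} u$. For the base case, take $z_0 \in \overline{D^+}$ attaining the supremum of $u$ on the compact set $\overline{D^+}$; if this supremum is zero the claim is trivial, otherwise $z_0 \in D$ since $u \equiv 0$ on $\partial D\cap \Q_R^{-b_BR^2,0}$. At step $k$, Lemma \ref{incylinder}(i) with $(\delta_1,\delta_2)=(\delta_k,\delta_{k+1})$ yields $\Q^1_{r_*}(z_k) \subseteq \Q_{R(1/2+\delta_{k+1})}^{-b_B R^2(1/2+\delta_{k+1}),0}\subseteq \Q_R^{-b_BR^2,0}$. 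Set $D' := D \cap \Q^1_{r_*}(z_k)$: then $\partial D' \cap \Q^1_{r_*}(z_k) = \partial D \cap \Q^1_{r_*}(z_k)$, where $u=0$, and openness of $D$ at $z_k$ ensures $D' \cap \Q^2_{r_*}(z_k) \neq \emptyset$. The volume bound gives $|D'|/|\Q^3_{r_*}(z_k)| \leq 1/2$, so Theorem \ref{growthlemma} applied to $D'$ yields
\[
\sup_{D'} u \geq \left(1+\tfrac{\eta}{2}\right)\sup_{D'\cap \Q^2_{r_*}(z_k)} u \geq \left(1+\tfrac{\eta}{2}\right) u(z_k) \geq \left(1+\tfrac{\eta}{2}\right)^{k+1} \sup_{D^+} u,
\]
the middle inequality following from continuity of $u$ at the limit point $z_k$ of $D'\cap \Q^2_{r_*}(z_k)$. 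Picking $z_{k+1} \in D'$ realizing (or, to sidestep nonattainment, approximating up to $\varepsilon/N$) this supremum yields $z_{k+1} \in D$ (any maximizer with positive value lies in $D$ since $u\equiv 0$ on $\partial D$) and $z_{k+1} \in \Q^1_{r_*}(z_k) \subseteq \Q_{R(1/2+\delta_{k+1})}^{-b_BR^2(1/2+\delta_{k+1}),0}$, closing the induction. After $N$ steps, $\sup_D u \geq u(z_N) \geq M\sup_{D^+} u$, and an $\varepsilon\to 0$ limit handles the small approximation errors.

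The main obstacle is the tension between two opposing constraints on $r_*$ at each step: Lemma \ref{incylinder}(i) forces $r_* \leq C_1 R (\delta_{k+1}-\delta_k)^{n+1/2}$ (upper bound for containment), while Theorem \ref{growthlemma} is only effective when $r_*^{Q+2} \gtrsim \delta R^{Q+2}/c_3$ (lower bound for volumetric dominance over $D$). The choice of $\delta$ above simultaneously reconciles both for all $N$ iterations; it forces $\delta$ to shrink polynomially in $1/N$ with exponent $(n+\tfrac{1}{2})(Q+2)$, hence polynomially in the required slope $1/\log M$.
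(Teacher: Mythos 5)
Your proposal is correct and follows essentially the same route as the paper: construct a chain of nested cylinders $\Q_{R(1/2+k/(2m))}^{-b_BR^2(1/2+k/(2m)),0}$ via Lemma \ref{incylinder}(i), choose $\delta$ so that $|D|$ is dominated by $|\Q^3_{r_*}|$ at the scale $r_* = C_1R(2m)^{-(n+1/2)}$, and apply Theorem \ref{growthlemma} at each step to gain the factor $(1+\eta/2)$. Your formulas for $\delta$ and $r_*$ match the paper's exactly (modulo replacing $m$ by $N$). The one presentational difference: the paper locates the iteration points $z^i$ as maximizers of $u$ on the parabolic boundary $\partial_p\Q^{(i)}$, a consequence of the weak maximum principle \eqref{MP} and continuity on $\overline{D}$, so the supremum is genuinely attained and there is no need for an $\varepsilon$-approximation; you instead keep $z_k$ in the open set $D$ and pass to a limit in $\varepsilon$ at the end. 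Both work; the paper's choice makes the bookkeeping slightly cleaner while yours avoids invoking the maximum principle to place the argmax.
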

\begin{proof} By translation invariance we can assume $\bar{z}=0\in\O$. Let $\eta$ be the constant in Theorem \ref{growthlemma}. For any $M>1$, let $m$ be the smallest natural number such that $\left(1+\frac{\eta}{2}\right)^m>M$. For $i\in\{0,\ldots,m\}$, denote
$$\Q^{(i)}=\Q^{-\frac{1}{2}b_B R^2\left(1 + \frac{i}{m}\right),0}_{\frac{1}{2}R\left(1 + \frac{i}{m}\right)}.$$
It suffices to assume $\sup\limits_{D^+}{u}>0$, as otherwise the statement is trivial. Then, for any $i\in\{0,\ldots,m-1\}$, we can choose a point $z^i=(x^i,t^i)$ in the parabolic boundary of $\Q^{(i)}$ such that $u(z^i)=\sup_{D\cap \Q^{(i)}}{u}$. The existence of $z^i$ is guaranteed by the weak maximum principle \eqref{MP}, which implies that if there is no such $z^i$, then $0=\sup_{D\cap \Q^{(i)}}{u}\geq \sup_{D^+}{u}$. Let us now denote
$$\Q^{1,(i)}_\rho=\Q_{K \rho}^{-b_B \rho^2, 0}(z^i).$$
By recalling that $K\geq 1$ and by exploiting Lemma \ref{incylinder} (item $(i)$, with $\delta_1=\frac{i}{2m}$, $\delta_2=\frac{i+1}{2m}$) we know that 
$$\rho=\frac{C_1}{(2m)^{n+\frac{1}{2}}} R \,\,\,\,\mbox{ yields }\,\,\,\, \Q^{1,(i)}_\rho\subseteq \Q^{(i+1)}.$$
 We are going to prove the statement of the lemma with the choice
$$\delta=	\frac{1}{2}C_1^{Q+2}\frac{\left|\Q^{-b_B,-\frac{1}{2}b_B}_{\sigma_0}\right|}{(2m)^{\left(n+\frac{1}{2}\right)(Q+2)}}.$$
In fact, defining $\Q^{2,(i)}_\rho:=\Q_{\sigma_0 \rho}^{-\frac{1}{4}b_B \rho^2, 0}(z^i)$ and $\Q^{3,(i)}_\rho:=\Q_{\sigma_0 \rho}^{-b_B \rho^2, -\frac{1}{2}b_B \rho^2}(z^i)$, the assumption $|D|\leq \delta R^{Q+2}$ implies
$$|D\cap \Q^{3,(i)}_\rho|\leq \delta R^{Q+2}=\frac{1}{2} |\Q^{3,(i)}_\rho|\quad\mbox{ and hence }\quad |\Q^{3,(i)}_\rho\smallsetminus D|= |\Q^{3,(i)}_\rho| - |D\cap \Q^{3,(i)}_\rho|\geq \frac{1}{2}|\Q^{3,(i)}_\rho|.$$
Applying Theorem \ref{growthlemma} in the cylinder $\Q^{1,(i)}_\rho$, and using the inclusion $\Q^{1,(i)}_\rho\subseteq \Q^{(i+1)}$, we get
$$\sup\limits_{D\cap \Q^{(i+1)}}{u} \geq \sup\limits_{D\cap \Q^{1,(i)}_\rho}{u} \geq \left(1 + \frac{\eta}{2} \right) \sup\limits_{D \cap \Q^{2,(i)}_\rho}{u}\geq  \left(1 + \frac{\eta}{2} \right) u(z^i) = \left(1 + \frac{\eta}{2} \right) \sup\limits_{D \cap \Q^{(i)}}{u}. $$
This holds true for every $i\in\{0,\ldots,m-1\}$. Therefore, since $ \Q^{(0)}= \Q^{-\frac{1}{2}b_B R^2,0}_{\frac{1}{2}R}$ and $\Q^{(m)}=\Q^{-b_B R^2,0}_R$, we finally obtain
$$\sup\limits_D{u} \geq \left(1+\frac{\eta}{2}\right)^m \sup\limits_{D^+}{u} \geq M \sup\limits_{D^+}{u}.$$ 
\end{proof}

We are finally ready to show the proof of the Harnack inequality. We begin with the proof of Theorem \ref{harnackineqH1}
\begin{proof}[Proof of Theorem \ref{harnackineqH1}]
Assume, without loss of generality, that $z_0 = 0$ and $\sup_{\Q_r^-} u = 2$. The aim is to find a structural lower bound for $u$ on $\Q_r^+$. Let us recall the definitions of the cylinders
$$\Q^2_r=\Q_{\sigma_0 r}^{-\frac{1}{4}b_B r^2, 0} \qquad \Q^3_r=\Q_{\sigma_0 r}^{-b_B r^2, -\frac{1}{2}b_B r^2}.$$
Notice that $\Q_r^+ \subset \Q^2_r$ and $\Q_r^- \subset \Q^3_r$. Consider the set $G := \left\{z \in \Q^3_r : u(z) > 1 \right\}$. Let $\delta > 0$ be the number from Lemma \ref{lemma91} corresponding to the choice of $M = 2^{1+(n+\frac{1}{2})(Q+2)}$, and define the structural constant
$$\epsilon_0 := \left(\frac{C_2}{2^{n+\frac{1}{2}}}\right)^{Q+2} \delta,$$
where $C_2$ is the constant appearing in Lemma \ref{incylinder}, item $(ii)$. We are faced with two possibilities:
\begin{itemize}
\item[-] Case 1: $|G| \geq \epsilon_0 r^{Q+2}$, or
\item[-] Case 2: $|G| < \epsilon_0 r^{Q+2}$.
\end{itemize}
For Case 1, we consider the function $w = 1 - u$. With the intent of applying Theorem \ref{growthlemma}, we define the set $D := \left\{z \in \Q^1_r : w(z) > 0 \right\}$. We may assume $D \cap \Q^2_r \neq \emptyset$, for otherwise $u\geq 1$ in $\Q^2_r\supset \Q_r^+$. Since $u$ is non-negative, we have $w \leq 1$ in $\Q^1_r$. Furthermore, $G \subset \Q^3_r \backslash D$, and so $|\Q^3_r \backslash D| \geq \epsilon_0 r^{Q+2}$. It follows from Theorem \ref{growthlemma} applied to $w$ that
$$1 \geq \sup_{\Q^1_r} w \geq \left(1 + \eta\frac{\epsilon_0 }{|\Q^3_1|}\right) \sup_{\Q^2_r \cap D} w \geq \left(1 + \eta\frac{\epsilon_0 }{|\Q^3_1|}\right) \sup_{\Q_r^+} w.$$
Thus,
$$\inf_{\Q_r^+} u \geq \frac{\hat{C}}{1 + \hat{C}} \qquad \text{ where } \hat{C} := \eta\frac{\epsilon_0 }{|\Q^3_1|}.$$
Consequently, \eqref{harnackH1} follows when Case 1 holds.

\noindent For Case 2, we carry out an iteration procedure, which we describe in the following steps:

{\bf Step 1:} Set 
$$\Q^{(s)}:=\Q_{r\frac{\sigma_0}{2}(1+s)}^{-\frac{b_B}{4}r^2(3+s^2),-\frac{b_B}{2}r^2}, \qquad s > 0.$$
Notice that $\Q^{(0)} = \Q_r^-$, while $\Q^{(1)} = \Q^3_r$. Consider the family of sets
$$G^{(0)}_s:=G\cap(\Q^{(s)} \backslash \Q^{(0)}), \qquad 0 < s < 1.$$
Observe that 
\begin{equation}\label{valueatonehalf}
\left|G^{(0)}_{1/2}\right| \leq |G| < \epsilon_0 r^{Q+2} =  \left(C_2 r\left(\frac{1}{2} \right)^{n+\frac{1}{2}}\right)^{Q+2} \delta.
\end{equation}
We claim 
\begin{equation}\label{claimbigO}
\left|G^{(0)}_s\right| \gtrsim s^2 \quad \text{ as }  s \rightarrow 0^+
\end{equation}
To see this, consider the point $\bar{\zeta}=(\bar{\xi}, \bar{\tau})\in\de_p \Q^{(0)}$ such that $u(\bar{\zeta})=2=\sup_{\Q^{(0)}} u$. By continuity of $u$, there exists a small neighborhood $U_{\bar{\xi}}\times(\bar{\tau}-\theta^2,\bar{\tau}+\theta^2)$ of $\bar{\zeta}$ in which $u>1$. We face two possibilities: either $\bar{\zeta}$ is on the ``base'' of the cylinder $\Q^{(0)}$, in which case $\bar{\xi}\in \overline{B_{r\frac{\sigma_0}{2}}(0)}$ and $\bar{\tau}=-\frac{3}{4}b_Br^2$, or $\bar{\zeta}$ is on the ``lateral side'' of $\Q^{(0)}$, in which case $|\bar{\xi}|_B=r\frac{\sigma_0}{2}$ and $\bar{\tau}\in(-\frac{3}{4}b_Br^2,-\frac{1}{2}b_Br^2]$. In the first case, it suffices to notice that, up to restricting $U_{\bar{\xi}}$ and for $s$ small enough with respect to $\theta$ and $r$, we have $U_{\bar{\xi}}\times(\bar{\tau}-\frac{1}{4}b_Br^2s^2,\bar{\tau})\subset G^{(0)}_s$ and thus $\left|G^{(0)}_s\right| \gtrsim s^2$. In the second case, up to restricting $\theta$ and for $s$ small enough, we have instead a sector $C_s\subset B_{r\frac{\sigma_0}{2}(1+s)}(0)\smallsetminus B_{r\frac{\sigma_0}{2}}(0)$ such that $C_s\times (\bar{\tau}-\theta^2,\bar{\tau})\subset G^{(0)}_s$. Since $|B_{r\frac{\sigma_0}{2}(1+s)}(0)\smallsetminus B_{r\frac{\sigma_0}{2}}(0)|\sim s$ by the dilation properties, we deduce that $\left|G^{(0)}_s\right| \gtrsim s\geq s^2$. This completes the proof of the claim \eqref{claimbigO}.

By \eqref{valueatonehalf} and \eqref{claimbigO}, there exists $s_1 \in (0,1/2)$ such that $$\left|G^{(0)}_{s_1}\right| =  \left(C_2 \cdot r  \cdot s_1^{n+\frac{1}{2}}\right)^{Q+2} \delta.$$
Let $\zeta_0 \in \partial_p \Q^{(s_1/2)}$ be such that $u(\zeta_0) \geq 2$. Using Lemma \ref{incylinder}, item $(ii)$ with $\delta_1 = 0$, $\delta_2 = s_1$, we obtain the existence of a cylinder $\Q_{\rho}^{-b_B\rho^2,0}(\zeta_0) \subseteq \Q^{(s_1)} \backslash \Q^{(0)}$, where
$$\rho = C_2 \cdot  r \cdot s_1^{n+\frac{1}{2}}.$$
Define $\Q_{(0)} := \Q_{\rho}^{-b_B\rho^2,0}(\zeta_0)$ and $D_{(0)}:=G \cap \Q_{(0)}$. Notice that $\zeta_0 \in D_{(0)}$ and
$$|D_{(0)}| \leq \left|G^{(0)}_{s_1}\right| = \delta \rho^{Q+2}.$$
Consider the function $v := u - 1$. The measure estimate for $D_{(0)}$ above allows us to apply Lemma \ref{lemma91} to $v$. Noticing that $v(\zeta_0) \geq 1$, we thus conclude 
$$\sup_{D_{(0)}} u \geq \sup_{D_{(0)}} v \geq M.$$
This implies, by the weak maximum principle \eqref{MP}, that 
$$\sup_{\partial_p \Q^{(s_1)}} u \geq M.$$

{\bf Step 2:} The construction described above is the $(\ell=0)$-case of our finite iteration scheme: we started from the cylinder $\Q^{(0)}$ for which we have $\sup_{\partial_p \Q^{(0)}} u=2$, and we found $0<s_1<1$ (in fact $s_1\in (0,1/2)$) such that $\sup_{\partial_p \Q^{(s_1)}} u \geq M$. Suppose now that for some index $\ell \geq 1$, we have chosen $s_\ell\in(0,1)$ satisfying
$$\sup_{\partial_p \Q^{(s_{\ell})}} u \geq 2 \left(\frac{M}{2}\right)^{\ell}.$$
If $s_{\ell} \geq 1/2$, we proceed directly to step 3 (we know this cannot happen when $\ell=1$). Otherwise we have $1 > 1 - s_{\ell} > 1/2$. In this scenario, we show how to choose $s_{\ell + 1} > s_{\ell}$ such that 
$$\sup_{\partial_p \Q^{(s_{\ell+1})}} u \geq 2 \left(\frac{M}{2}\right)^{\ell+1}.$$
Let $G_{\ell} := \left\{z \in \Q^3_r : u(z) > \left(\frac{M}{2}\right)^{\ell}\right\}$. For any $s > 0$, define the family of sets
$$G^{(\ell)}_s:=G_{\ell} \cap\left(\Q^{(s + s_{\ell})} \backslash \Q^{(s_{\ell})}\right), \qquad 0 < s < 1 - s_{\ell}.$$
Since $G_{\ell} \subset G$, we have
$$\left|G^{(\ell)}_{1/2} \right| \leq |G| \leq  \left(C_2 r\left(\frac{1}{2} \right)^{n+\frac{1}{2}}\right)^{Q+2} \delta.$$
Arguing as in \eqref{claimbigO}, we also have $\left|G^{(\ell)}_s\right| \gtrsim s^2$ as $s \rightarrow 0^+$. Hence, there exists $\rho_{\ell } \in (0, \frac{1}{2}) $ such that 
$$\left|G^{(\ell)}_{\rho_{\ell}}\right| =  \left(C_2 \cdot r \cdot \rho_{\ell}^{n+\frac{1}{2}} \right)^{Q+2} \delta.$$
Let $\zeta_{\ell} \in \partial_p \Q^{(s_{\ell} + \frac{\rho_{\ell}}{2})}$ be such that $u(\zeta_{\ell}) \geq 2 \left(\frac{M}{2}\right)^{\ell}$. Defining $s_{\ell+1} := s_{\ell} + \rho_{\ell}$ and using Lemma \ref{incylinder}. item $(ii)$ with $\delta_1 = s_{\ell}$, $\delta_2 = s_{\ell+1}$, we obtain the existence of a cylinder $\Q_{\rho}^{-b_B\rho^2,0}(\zeta_{\ell}) \subseteq \Q^{(s_{\ell + 1})} \backslash \Q^{(s_{\ell})}$, where
$$\rho = C_2 \cdot  r \cdot \rho_{\ell}^{n+\frac{1}{2}}.$$
Define $\Q_{(\ell)} := \Q_{\rho}^{-b_B\rho^2,0}(\zeta_{\ell})$ and $D_{(\ell)}:= G \cap \Q_{(\ell)}$. Notice that $\zeta_{\ell} \in D_{(\ell)}$ and
$$|D_{(\ell)}| \leq \left|G^{(\ell)}_{\rho_{\ell}}\right| =  \left(C_2 \cdot r \cdot \rho_{\ell}^{n+\frac{1}{2}} \right)^{Q+2} \delta = \delta \rho^{Q+2}.$$
Consider the function $v := u - \left(\frac{M}{2}\right)^{\ell}$. The measure estimate for $D_{(\ell)}$ above allows us to apply Lemma \ref{lemma91} to $v$. Noticing that $v(\zeta_{\ell}) \geq \left(\frac{M}{2}\right)^{\ell}$, we thus conclude 
$$\sup_{D_{(\ell)}} u \geq \sup_{D_{(\ell)}} v \geq M \cdot \left(\frac{M}{2}\right)^{\ell} = 2 \left(\frac{M}{2}\right)^{\ell +1}.$$
This implies, by the weak maximum principle, that 
$$\sup_{\partial_p \Q^{(s_{\ell+1})}} u \geq 2 \left(\frac{M}{2}\right)^{\ell +1}.$$

{\bf Step 3:} There must exist a smallest integer $k \geq 1$ such $s_{k+1} \geq 1/2$, for otherwise the function $u$ would be unbounded on $\Q^3_r$. This implies Step 2 must terminate after finitely many iterations. By denoting $\rho_0:=s_1$ and recalling the definition of $s_{k+1}$, we have $\rho_0 + \rho_1 + \cdots + \rho_k \geq \frac{1}{2}$ and $\rho_0 + \cdots + \rho_{k-1} < \frac{1}{2}$. For each $\ell \in \left\{0, \ldots, k \right\}$, we know that the corresponding set $G^{(\ell)}_{\rho_{\ell}}$ satisfies
$$\left|G^{(\ell)}_{\rho_{\ell}}\right| =  \left(C_2 \cdot r \cdot \rho_{\ell}^{n+\frac{1}{2}} \right)^{Q+2} \delta,$$
and that $u > \left(\frac{M}{2}\right)^{\ell}$ on $G^{(\ell)}_{\rho_{\ell}}$ by definition. Since $\rho_0 + \cdots + \rho_k \geq \frac{1}{2}$, there must exist at least one index $i_0 \in\left\{0, \ldots, k \right\}$ such that 
$$\rho_{i_0} \geq \left(\frac{1}{2}\right)^{i_0 + 2}.$$
Therefore, we have
$$\left|G^{(i_0)}_{\rho_{i_0}}\right| \geq \left(C_2 \cdot r \cdot 2^{-(i_0 + 2)(n+\frac{1}{2})} \right)^{Q+2} \delta,$$
and
$$u > \left(\frac{M}{2}\right)^{i_0} \ \text{on} \ G^{(i_0)}_{\rho_{i_0}}.$$
We now make one final use of Theorem \ref{growthlemma}. Consider the function $v := \left(\frac{M}{2}\right)^{i_0} -u$. Then $\mathcal{L}_A v = 0$ and $v \leq \left(\frac{M}{2}\right)^{i_0}$ since $u$ is non-negative on $\Q^1_r$. Define $D := \left\{z \in \Q^1_r : v(z) > 0 \right\}$. Then $G_{i_0} \subset \Q^3_r \backslash D$. Since $G^{(i_0)}_{\rho_{i_0}} \subset G_{i_0}$, we have from the measure estimate above that
$$\dfrac{|\Q^3_r \backslash D|}{|\Q^3_r|} \geq \dfrac{\left(C_2 \cdot r \cdot 2^{-(i_0 + 2)(n+\frac{1}{2})} \right)^{Q+2} \delta}{|\Q^3_1| r^{Q+2}} = \frac{C_2^{Q+2}\delta}{|\Q^3_1|} 2^{-(n+\frac{1}{2})(Q+2)(i_0+2)}.$$
Finally, we may assume that $\left\{v \geq 0 \right\} \cap \Q^2_r \neq \emptyset$; for otherwise, we would have $u \geq \left(\frac{M}{2}\right)^{i_0} \geq 1$ on $\Q^2_r\supset \Q_r^+$, and \eqref{harnackH1} would automatically follow. Thus, we may apply Theorem \ref{growthlemma} to $v$ and obtain
$$\left(\frac{M}{2}\right)^{i_0} \geq \sup_{\Q^1_r} v \geq \left(1 + \eta \frac{C_2^{Q+2}\delta}{|\Q^3_1|} 2^{-(n+\frac{1}{2})(Q+2)(i_0+2)} \right) \sup_{\Q^2_r\cap D} v \geq  \left(1 + \eta \frac{C_2^{Q+2}\delta}{|\Q^3_1|} 2^{-(n+\frac{1}{2})(Q+2)(i_0+2)} \right) \sup_{\Q_r^+}v .$$
Inserting the definition of $v$ and recalling that $M = 2^{1+(n+\frac{1}{2})(Q+2)}$, we have
$$\left(\frac{M}{2}\right)^{i_0} \geq \left(1 + \eta \frac{C_2^{Q+2}\delta}{|\Q^3_1|} 2^{-2(n+\frac{1}{2})(Q+2)}\left(\frac{M}{2}\right)^{-i_0} \right) \left(\left(\frac{M}{2}\right)^{i_0} - \inf_{\Q_r^+} u \right).$$
Denoting $\hat{c} := \eta \frac{C_2^{Q+2}\delta}{|\Q^3_1|} 2^{-2(n+\frac{1}{2})(Q+2)}$, we get
$$\left(1+ \hat{c} \left(\frac{M}{2}\right)^{-i_0}\right) \inf_{\Q_r^+} u \geq \hat{c}.$$
Since $\left(\frac{M}{2}\right)^{-i_0} \leq 1$, we conclude that
$$\inf_{\Q_r^+} u \geq \frac{\hat{c}}{1 + \hat{c}}.$$
This establishes \eqref{harnackH1} when Case 2 holds, and finishes the proof of the theorem. \end{proof}

We next show the proof of Theorem \ref{harnackineqH2}.

\begin{proof}[Proof of Theorem \ref{harnackineqH2}] 
For small radii we can follow the proof of Theorem \ref{harnackineqH1}. In fact, if
$$ r\leq r_0:=\frac{\e_0}{K}\min\left\{1,\frac{1}{C_1C_2}4^{n+\frac{1}{2}}\right\}, $$
then we can invoke Theorem \ref{growthlemmaH2} and argue exactly as in the proofs of Lemma \ref{lemma91} and Theorem \ref{harnackineqH1}. This gives the existence of a structural constant $C_H$ such that, for all $\Q^1_r=\Q_{Kr}^{-b_B r^2,0}(z_0)\Subset\Omega$ and any $u \in C^2(\Omega)$ which is a nonnegative solution to $\elle_A u=0$ in $\Q^1_r$, we have
\begin{equation}\label{toiterate}
\sup_{\Q_{\frac{\sigma_0}{2}r}^{-\frac{3b_B}{4}r^2,-\frac{b_B}{2}r^2}(z_0)} u \leq C_H \inf_{\Q_{\frac{\sigma_0}{2}r}^{-\frac{b_B}{4}r^2,0}(z_0)} u,\qquad\mbox{ for any }0<r\leq r_0.
\end{equation}
For $r_0 < r \leq 1$, one can use \eqref{toiterate} along with the existence of Harnack chains established in this context by Polidoro \cite[Section 3]{PoliA}. This proves \eqref{harnackH2} with some constants $\sigma<\frac{\sigma_0}{2}$ and $C>C_H$ depending on the modulus of continuity of the coefficients $\omega$ (i.e. on $\e_0$).
\end{proof}

Finally, we provide the proof of Lemma \ref{incylinder}, as promised.

\begin{proof}[Proof of Lemma \ref{incylinder}] We first prove $(i)$. Fix $z_0=(x_0, t_0)$ in the closure of the cylinder $\Q_{R\left(\frac{1}{2}+\delta_1\right)}^{-b_B R^2\left(\frac{1}{2}+\delta_1\right),0}$, and fix any point $\bar{\zeta}=(\bar{\xi},\bar{\tau})\in \Q_{K\rho}^{-b_B\rho^2,0}(z_0)$. This means that there exists $\zeta=(\xi,\tau)\in \Q_{K\rho}^{-b_B\rho^2,0}$ such that $\bar{\zeta}=z_0\circ \zeta = (\xi + E(\tau)x_0, \tau + t_0)$. By definition we have
$$0>\tau + t_0>-b_B\rho^2 -b_B R^2\left(\frac{1}{2}+\delta_1\right) \geq -b_B R^2\left(\frac{1}{2}+\delta_2\right),$$
where the last inequality holds provided that
\begin{equation}\label{euno}
\rho\leq R\sqrt{\delta_2-\delta_1}.    
\end{equation}
We have also to find conditions ensuring that $|\xi + E(\tau)x_0|_B<R\left(\frac{1}{2}+\delta_2\right)$. This is trivial if $x_0=0$. So, suppose $x_0\neq 0$. We start by noticing that
$$\max{\left\{\left|D_{\frac{\bar{\sigma}}{|x_0|_B}} x_0\right|, \left|D_{\frac{\bar{\sigma}}{|x_0|_B}} x_0\right|^{\frac{1}{2n+1}}\right\}}\geq \frac{1}{\bar{\sigma}}\left|D_{\frac{\bar{\sigma}}{|x_0|_B}} x_0\right|_B=1,$$
which yields
\begin{equation}\label{eee}
\left|D_{\frac{\bar{\sigma}}{|x_0|_B}} x_0\right|^{\frac{1}{2n+1}}=\min{\left\{\left|D_{\frac{\bar{\sigma}}{|x_0|_B}} x_0\right|, \left|D_{\frac{\bar{\sigma}}{|x_0|_B}} x_0\right|^{\frac{1}{2n+1}}\right\}}\leq\frac{1}{\sigma_0}\left|D_{\frac{\bar{\sigma}}{|x_0|_B}} x_0\right|_B=\frac{\bar{\sigma}}{\sigma_0}.
\end{equation}
By \eqref{commu}, \eqref{benne}, and \eqref{eee}, we get
\begin{eqnarray*}
\left|\left(E(\tau)-\mathbb{I}_N\right) x_0\right|_B&=& \frac{|x_0|_B}{\bar{\sigma}} \left| D_{\frac{\bar{\sigma}}{|x_0|_B}} \left(E(\tau)-\mathbb{I}_N\right) x_0\right|_B = \frac{|x_0|_B}{\bar{\sigma}} \left|\left(E\left(\frac{\bar{\sigma}^2\tau}{|x_0|^2_B}\right)-\mathbb{I}_N\right) D_{\frac{\bar{\sigma}}{|x_0|_B}}x_0\right|_B\\
&\leq&c(n,B)\frac{|x_0|_B}{\bar{\sigma}} \max\left\{\left|D_{\frac{\bar{\sigma}}{|x_0|_B}}x_0\right|^{\frac{1}{3}},\left|D_{\frac{\bar{\sigma}}{|x_0|_B}}x_0\right|^{\frac{1}{2n+1}}\right\}\max\left\{\left|\frac{\bar{\sigma}^2\tau}{|x_0|^2_B}\right|^{\frac{1}{2n+1}},\left|\frac{\bar{\sigma}^2\tau}{|x_0|^2_B}\right|^{\frac{n}{2n+1}}\right\}\\
&\leq& R\frac{c(n,B)}{\bar{\sigma}}\left(\frac{\bar{\sigma}}{\sigma_0}\right)^{\frac{2n+1}{3}} \frac{|x_0|_B}{R} \max\left\{\left(\frac{R}{|x_0|_B}\right)^{\frac{2}{2n+1}}\left|\frac{\bar{\sigma}^2\tau}{R^2}\right|^{\frac{1}{2n+1}},\left(\frac{R}{|x_0|_B}\right)^{\frac{2n}{2n+1}}\left|\frac{\bar{\sigma}^2\tau}{R^2}\right|^{\frac{n}{2n+1}}\right\}\\
&\leq&R\frac{c(n,B)}{\bar{\sigma}}\left(\frac{\bar{\sigma}}{\sigma_0}\right)^{\frac{2n+1}{3}} \left(\frac{|x_0|_B}{R}\right)^{\frac{1}{2n+1}} \max\left\{\left|\frac{\bar{\sigma}^2\tau}{R^2}\right|^{\frac{1}{2n+1}},\left|\frac{\bar{\sigma}^2\tau}{R^2}\right|^{\frac{n}{2n+1}}\right\},
\end{eqnarray*}
where we have used $|x_0|_B\leq R $ (since $\delta_1\leq \frac{1}{2}$) and $n\geq 1$. If in addition
\begin{equation}\label{edue}
\rho\leq \frac{R}{\bar{\sigma}\sqrt{b_B}},
\end{equation}
then we also have $\left|\frac{\bar{\sigma}^2\tau}{R^2}\right|\leq \frac{\bar{\sigma}^2 b_B\rho^2}{R^2}\leq 1$, and so
\begin{equation}\label{valbe}
\left|\left(E(\tau)-\mathbb{I}_N\right) x_0\right|_B\leq R\frac{c(n,B)}{\bar{\sigma}}\left(\frac{\bar{\sigma}}{\sigma_0}\right)^{\frac{2n+1}{3}} \left(\bar{\sigma}^2 b_B\frac{\rho^2}{R^2}\right)^{\frac{1}{2n+1}}.
\end{equation}
Hence, by the definition of $\xi, x_0$, and by \eqref{triangleB}, \eqref{valbe}, we get
$$|\xi + E(\tau)x_0|_B\leq |\xi|_B + |x_0|_B + \left|\left(E(\tau)-\mathbb{I}_N\right) x_0\right|_B< K\rho + R\left(\frac{1}{2}+\delta_1\right) + R\frac{c(n,B)}{\bar{\sigma}}\left(\frac{\bar{\sigma}}{\sigma_0}\right)^{\frac{2n+1}{3}} \left(\bar{\sigma}^2 b_B\frac{\rho^2}{R^2}\right)^{\frac{1}{2n+1}}.$$
Finally, if we have
\begin{equation}\label{etre}
\rho\leq R \frac{\delta_2-\delta_1}{2K}\qquad\mbox{ and }
\end{equation}
\begin{equation}\label{equattro}
\rho\leq R\frac{1}{\bar{\sigma} \sqrt{b_B}}\left(\frac{\delta_2-\delta_1}{2}\frac{\sigma_0^{\frac{2n+1}{3}}}{c(n,B)\bar{\sigma}^{\frac{2n-2}{3}}}\right)^{n+\frac{1}{2}},
\end{equation}
then
$$|\xi + E(\tau)x_0|_B< R(\delta_2-\delta_1) + R\left(\frac{1}{2}+\delta_1\right)=R\left(\frac{1}{2}+\delta_2\right)$$
as desired. The conditions which $\rho$ has to satisfy are \eqref{euno}, \eqref{edue}, \eqref{etre}, and \eqref{equattro}. Since $\delta_2-\delta_1<1$, these conditions are satisfied if 
$$\rho\leq C_1 R (\delta_2-\delta_1)^{n+\frac{1}{2}}$$
for a suitable $C_1$ depending on $b_B, \sigma_0, \bar{\sigma}, K, c(n,B)$.\\
Let us now prove $(ii)$. Proceeding verbatim as in the first part, we can prove the existence of a structural constant $\tilde{C}_2$ such that, for any $\rho\leq \tilde{C}_2 R \left(\delta_2-\delta_1\right)^{n+\frac{1}{2}}$ and any $z_0$ in the closure of $\Q_{R\frac{\sigma_0}{2}\left(1+\frac{\delta_1+\delta_2}{2}\right)}^{-\frac{b_B}{4} R^2\left(3+\frac{(\delta_1+\delta_2)^2}{4}\right),-\frac{b_B}{2} R^2}$, we have
$$z_0\circ\zeta \in \Q_{R\frac{\sigma_0}{2}\left(1+\delta_2\right)}^{-\frac{b_B}{4} R^2\left(3+\delta^2_2\right),-\frac{b_B}{2} R^2}\quad\mbox{ for every fixed }\zeta\in \Q_{\rho}^{-b_B\rho^2,0}.$$
In particular, for such $\rho$ and using $\delta_1,\delta_2\leq 1$, we also have \eqref{valbe}. If we assume, in addition, that $z_0\in\de_p\Q_{R\frac{\sigma_0}{2}\left(1+\frac{\delta_1+\delta_2}{2}\right)}^{-\frac{b_B}{4} R^2\left(3+\frac{(\delta_1+\delta_2)^2}{4}\right),-\frac{b_B}{2} R^2}$, then either $t_0=-\frac{b_B}{4} R^2\left(3+\frac{(\delta_1+\delta_2)^2}{4}\right)$ or $|x_0|_B=R\frac{\sigma_0}{2}\left(1+\frac{\delta_1+\delta_2}{2}\right)$. If the first possibility occurs, we have 
$$\tau + t_0<t_0<-\frac{b_B}{4} R^2\left(3+\delta_1^2\right) \quad\mbox{ which implies }\quad z_0\circ\zeta \notin \Q_{R\frac{\sigma_0}{2}\left(1+\delta_1\right)}^{-\frac{b_B}{4} R^2\left(3+\delta^2_1\right),-\frac{b_B}{2} R^2}.$$
On the other hand, if the second possibility occurs, then by \eqref{triangleB} and \eqref{valbe} we get
\begin{eqnarray*}
|\xi + E(\tau)x_0|_B&\geq&  |x_0|_B - |\xi|_B - \left|\left(E(\tau)-\mathbb{I}_N\right) x_0\right|_B\\
&>& R\frac{\sigma_0}{2}\left(1+\frac{\delta_1+\delta_2}{2}\right) - \rho - R\frac{c(n,B)}{\bar{\sigma}}\left(\frac{\bar{\sigma}}{\sigma_0}\right)^{\frac{2n+1}{3}} \left(\bar{\sigma}^2 b_B\frac{\rho^2}{R^2}\right)^{\frac{1}{2n+1}}.
\end{eqnarray*}
If $\rho$ also satisfies
$$\rho\leq R\frac{\sigma_0}{2} \frac{\delta_2-\delta_1}{4}\quad\mbox{ and }\quad\rho\leq R\frac{1}{\bar{\sigma} \sqrt{b_B}}\left(\frac{\sigma_0}{2}\frac{\delta_2-\delta_1}{4}\frac{\sigma_0^{\frac{2n+1}{3}}}{c(n,B)\bar{\sigma}^{\frac{2n-2}{3}}}\right)^{n+\frac{1}{2}},$$
then
$$|\xi + E(\tau)x_0|_B>R\frac{\sigma_0}{2}\left(1+\delta_1\right) \quad\mbox{ which implies }\quad z_0\circ\zeta \notin \Q_{R\frac{\sigma_0}{2}\left(1+\delta_1\right)}^{-\frac{b_B}{4} R^2\left(3+\delta^2_1\right),-\frac{b_B}{2} R^2}.$$
Therefore, up to modifying the constant $\tilde{C}_2$ to a suitable structural constant $C_2$, we have the desired conclusion.
\end{proof}

\section*{Acknowledgments}

\noindent F.A. wishes to thank Prof. Brian Rider for providing financial support through his NSF grant DMS--1406107. G.T. has been partially supported by the Gruppo Nazionale per l'Analisi Matematica, la Probabilit\`a e le loro Applicazioni (GNAMPA) of the Istituto Nazionale di Alta Matematica (INdAM). The authors would like to thank Prof. Luis Silvestre for suggesting this problem at the 2017 Chicago Summer School in Analysis, and the anonymous referee for their valuable comments on the manuscript.

\bigskip

\noindent {\bf Compliance with Ethical Standards:} The authors declare that they have no conflict of interest.

\bibliographystyle{amsplain}

\end{document}